\newcommand{\bm}[1]{\boldsymbol{#1}}
\newcommand{\veta}{{\bm{\eta}}}
\newcommand{\vell}{{\bm{\ell}}}
\newcommand{\cB}{{\mathcal{B}}}
\newcommand{\cK}{{\mathcal{K}}}
\newcommand{\cL}{{\mathcal{L}}}
\newcommand{\cP}{{\mathcal{P}}}
\newcommand{\cW}{{\mathcal{W}}}
\newcommand{\cX}{{\mathcal{X}}}
\newcommand{\vareps}{\varepsilon}
\newcommand{\EE}{\mathbb{E}} 
\newcommand{\RR}{\mathbb{R}} 
\newcommand{\dom}{{\mathrm{dom}}} 
\newcommand{\Null}{{\mathrm{Null}}} 
\newcommand{\st}{\mbox{ s.t. }}
\DeclareMathOperator*{\argmin}{arg\,min} 
\newcommand{\bc}{\begin{center}}
\newcommand{\ec}{\end{center}}
\newcommand{\bdm}{\begin{displaymath}}
\newcommand{\edm}{\end{displaymath}}
\newcommand{\beq}{\begin{equation}}
\newcommand{\eeq}{\end{equation}}
\newcommand{\bfl}{\begin{flushleft}}
\newcommand{\efl}{\end{flushleft}}
\newcommand{\bt}{\begin{tabbing}}
\newcommand{\et}{\end{tabbing}}
\newcommand{\beqn}{\begin{eqnarray}}
\newcommand{\eeqn}{\end{eqnarray}}
\newcommand{\beqs}{\begin{align*}} 
\newcommand{\eeqs}{\end{align*}}  
\newtheorem{theorem}{Theorem}[section]
\newtheorem{definition}{Definition}[section]
\newtheorem{lemma}{Lemma}[section]
\newtheorem{proposition}{Proposition}[section]
\newtheorem{remark}{Remark}[section]
\newtheorem{assumption}{Assumption}
\begin{document}

\title{First-order methods for constrained convex programming based on linearized augmented Lagrangian function\thanks{This work is partly supported by NSF grant DMS-1719549.}}

\author{Yangyang Xu\thanks{\url{xuy21@rpi.edu}. Department of Mathematical Sciences, Rensselaer Polytechnic Institute, Troy, New York.}
}

\date{}

\maketitle

\begin{abstract}
First-order methods have been popularly used for solving large-scale problems. However, many existing works only consider unconstrained problems or those with simple constraint. In this paper, we develop two first-order methods for constrained convex programs, for which the constraint set is represented by affine equations and smooth nonlinear inequalities. Both methods are based on the classic augmented Lagrangian function. They update the multipliers in the same way as the augmented Lagrangian method (ALM) but employ different primal variable updates. The first method, at each iteration, performs a single proximal gradient step to the primal variable, and the second method is a block update version of the first one. 

For the first method, we establish its global iterate convergence as well as global sublinear and local linear convergence, and for the second method, we show a global sublinear convergence result in expectation. Numerical experiments are carried out on the basis pursuit denoising and a convex quadratically constrained quadratic program to show the empirical performance of the proposed methods. Their numerical behaviors closely match the established theoretical results.

\vspace{0.3cm}

\noindent {\bf Keywords:} augmented Lagrangian method (ALM), nonlinearly constrained programming, first-order method, global convergence, iteration complexity
\vspace{0.3cm}

\noindent {\bf Mathematics Subject Classification:} 90C06, 90C25, 90C30, 68W40.

\end{abstract}

\section{Introduction}
Recent years have witnessed the surge of first-order methods partly due to the increasingly big data involved in modern applications. Compared to second or higher-order methods, first-order ones only require gradient information and generally have much lower per-iteration complexity. However, many existing works on first-order methods are about problems without constraint or with easy-to-project constraint and/or with affine constraint.  

In this paper, we consider the generally constrained convex programming
\begin{equation}\label{eq:ccp}
\min_{x} f_0(x)\equiv g(x)+h(x), \st Ax=b,\ f_j(x)\le 0, j=1,\ldots, m,
\end{equation}
where $g$ and $f_j$ for $j=1,\ldots,m$ are convex and Lipschitz differentiable functions, and $h$ is a proper closed convex (possibly nondifferentiable) function. For practical efficiency of our algorithms, we will assume $h$ to be simple in the sense that its proximal mapping is easy to compute. However, our convergence results do not require this assumption.

Applications that can be formulated into \eqref{eq:ccp} appear in many areas including operations research, statistics, machine learning, engineering, just to name a few. Towards finding a solution to \eqref{eq:ccp}, we design algorithms that only need zeroth and first-order information of $g$ and $f_j, j=1,\ldots, m$, and the proximal mapping of $h$.

\subsection{Augmented Lagrangian method}
Our algorithms are based on augmented Lagrangian function of \eqref{eq:ccp}. In the literature, there are several different augmented Lagrangian functions (see \cite{ben1997penalty-barrier} for example), and we use the classic one. Let
$$\psi_\beta(u,v) = \left\{\begin{array}{ll}uv+\frac{\beta}{2}u^2,\,&\text{ if }\beta u +v \ge0,\\[0.2cm]
-\frac{v^2}{2\beta},\,&\text{ if }\beta u +v < 0,\end{array}\right.$$
and
$$\Psi_\beta(x,z)=\sum_{j=1}^m\psi_\beta(f_j(x),z_j).$$
Then the classic augmented Lagrangian function of \eqref{eq:ccp} is
\begin{equation}\label{eq:aug-fun}
\cL_\beta(x,y,z) = g(x)+h(x)+ y^\top(Ax-b) + \frac{\beta}{2}\|Ax-b\|^2 + \Psi_\beta(x,z),
\end{equation}
where $y$ and $z$ are Lagrangian multipliers, and $\beta>0$ is the penalty parameter.

The augmented Lagrangian method (ALM) for \eqref{eq:ccp}, at each iteration, renews $x$-variable by minimizing $\cL_\beta$ with respect to $x$ while $y$ and $z$ are fixed and then perform an augmented dual gradient ascent update to the multipliers $y$ and $z$, namely,
\begin{subequations}
\begin{align}
&x^{k+1}\in\argmin_x \cL_\beta(x, y^k,z^k),\label{eq:alm-x}\\
&y^{k+1}=y^k+\rho_y \nabla_y \cL_\beta(x^{k+1},y^k,z^k),\\
&z^{k+1}=z^k+\rho_z \nabla_z \cL_\beta(x^{k+1},y^k,z^k).
\end{align}
\end{subequations}
In general, it is difficult to solve the $x$-subproblem exactly or to a high accuracy. In one recent work \cite{xu2017ialm}, we show that if \eqref{eq:alm-x} is solved to a certain error tolerance, a global sublinear convergence of the inexact ALM can be established. In this work, we propose to perform one single proximal gradient update to \eqref{eq:alm-x}, and a sublinear convergence can still be shown.

\subsection{Related work}
ALM has been popularly used to solve constrained optimization problems; see books \cite{bertsekas1999nonlinear, bertsekas2014constrained}. However, most works on first-order methods in the ALM framework consider affinely constrained problems, and only a few study the methods for generally constrained problems in the form of \eqref{eq:ccp}. We review these works below.

For smooth affinely constrained convex programs, \cite{lan2016iteration-alm} analyzes the iteration complexity of an inexact ALM, where each primal subproblem is approximately solved by Nesterov's optimal first-order method \cite{nesterov2013gradient}. It shows that to reach an $\vareps$-optimal solution (see Definition \ref{def:eps-opt} below), $O(\vareps^{-\frac{7}{4}})$ gradient evaluations are sufficient. In addition, it shows that $O(\vareps^{-1}|\log\vareps|)$ gradient evaluations can guarantee an $\vareps$-optimal solution by an inexact proximal ALM. Although the number of gradient evaluations is not explicitly given, \cite{nedelcu2014computational, liu2016iALM} also consider inexact ALM. They specify the accuracy that each primal subproblem need be solved to and estimate the outer iteration complexity of the inexact ALM. Within the ALM framework, \cite{xu2017accelerated-alm} perform a single proximal gradient update to primal variable at each iteration and establish $O(\vareps^{-1})$ complexity result to have an $\vareps$-optimal solution for affinely constrained composite convex programs. This linearized ALM also appears as a special case of the methods in \cite{hong2014block, cui2016convergence, GXZ-RPDCU2016, xu2017accelerated-pdbc, gao2017first}, which perform Gauss-Seidel or randomized block coordinate update to the primal variable in the ALM framework. 

Towards finding solutions of general saddle-point problems, \cite{nedic2009subgradient} gives a subgradient method. If both primal and dual constraint sets are compact, the method has $O(1/\sqrt{k})$ convergence rate in terms of primal-dual gap, where $k$ is the number of iterations. It also discusses how to apply the subgradient method to convex optimization problems with nonlinear inequality constraint.  On smooth constrained convex problems, \cite{yu2016primal} proposes a primal-dual type first-order method (see \eqref{eq:pd-yn} in section \ref{sec:qcqp}). Assuming compactness of the constraint set, it establishes $O(\vareps^{-1})$ iteration complexity result to produce an $\vareps$-optimal solution. Recently, \cite{xu2017ialm} studies an inexact ALM for \eqref{eq:ccp} and proposes to use Nesterov's optimal first-order method to approximately solve each $x$-subproblem. When the constraint set is bounded, it shows that nearly $O(\vareps^{-\frac{3}{2}})$ gradient evaluations suffice to obtain an $\vareps$-optimal solution, and for the smooth case, the result can be improved to $O(\vareps^{-1}|\log\vareps|)$. Compared to these works, our iteration complexity results will be better under weaker assumptions. 

\subsection{Contributions} This paper mainly makes the following contributions.
\begin{itemize}
\item We propose a first-order method, named LALM, for solving composite convex problems with both affine equality and smooth nonlinear inequality constraints. The method is based on proximal linearization of the classic augmented Lagrangian function. Under mild assumptions, we show global iterate sequence convergence of LALM to a primal-dual optimal solution.

\item Also, we analyze the iteration complexity of the proposed method. We show that to reach an $\vareps$-optimal solution, $O(\vareps^{-1})$ gradient evaluations are sufficient. In addition, we establish its local linear convergence by assuming the existence of a non-degenerate primal-dual solution and positive definiteness of Hessian of the augmented Lagrangian function near the non-degenerate primal-dual solution.  

\item Furthermore, as the problem has the so-called coordinate friendly structure, we propose a block update version of LALM. At each iteration, the method renews a single block coordinate while keeping all the other coordinates unchanged and then immediately performs an update to dual variables. We show that in expectation, an $\vareps$-optimal solution can be obtained by $O(\vareps^{-1})$ gradient evaluations.

\item We implement LALM and its block update version and apply them to the basis pursuit denoising problem and a quadratically constrained quadratic program. On both problems, we notice better performance of the block-LALM in terms of iteration number. In addition, when the iterate is far away from optimality, sublinear convergence is observed, and while the iterate approaches to optimality, both methods converge linearly.
\end{itemize}

\subsection{Notation and organization}
We focus on finite-dimensional Euclidean space, but our analysis can be directly extended to a Hilbert space. We use $[m]$ as the set $\{1,2,\ldots,m\}$, and $[a]_+=\max(0,a)$ denotes the positive part of a real number $a$. We use $I$ as the identity matrix. Given a symmetric positive definite (SPD) matrix $P$, we define $\|x\|_P=\sqrt{x^\top P x}$, and if $P=I$, we simply write it as $\|x\|$. Also, given a nonnegative vector $\vell=[\ell_1,\ldots,\ell_n]\in\RR^n$, we define $\|x\|_\vell^2=\sum_{i=1}^n \ell_i \|x_i\|^2$ if $x$ is partitioned into $n$ blocks $(x_1,\ldots,x_n)$. For any convex function $f(x)$, we use $\tilde{\nabla}f(x)$ as its subgradient and $\partial f(x)$ the subdifferential of $f$ at $x$, i.e., the set of all subgradients at $x$. When $f$ is differentiable, $\tilde{\nabla} f(x)$ coincides with the gradient of $f$, and we simply write it to $\nabla f(x)$. The indicator function of a set $\cX$ is defined as $\iota_\cX(x)=0$ if $x\in \cX$ and $+\infty$ otherwise. $B_\gamma(x)$ represents a ball with radius $\gamma$ and center $x$. $\EE_{i_k}$ denotes the expectation about $i_k$ conditioned on all previous history.

For ease of notation, we use $w$ as the triple $(x,y,z)$ and denote the smooth part of $\cL_\beta$ as 
$$F_\beta(w) = \cL_\beta(w) - h(x).$$
In addition, we define
\begin{equation}\label{eq:def-Phi}
\Phi(\bar{x}; x,y,z)=f_0(\bar{x})-f_0(x)+y^\top (A\bar{x}-b) + \sum_{j=1}^m z_j f_j(\bar{x}).
\end{equation}

\begin{definition}[$\vareps$-optimal solution]\label{def:eps-opt}
Let $f_0^*$ be the optimal value of \eqref{eq:ccp}. We call $\bar{x}$ an $\vareps$-optimal solution to \eqref{eq:ccp} if
$$|f_0(\bar{x})-f_0^*| \le \vareps,\quad \|A\bar{x}-b\| + \sum_{j=1}^m [f_j(\bar{x})]_+\le \vareps.$$
\end{definition}

\textbf{Organization.} The rest of the paper is organized as follows. Section \ref{sec:pre} gives several technical results that will be used to prove our main theorems. We propose a linearized ALM for \eqref{eq:ccp} in section \ref{sec:lalm} and a block linearized ALM in section \ref{sec:blalm}. Convergence results are also given. In section \ref{sec:application}, we discuss a few applications and how the proposed methods can be applied. Numerical results are given in section \ref{sec:numerical}, and finally section \ref{sec:conclusion} concludes the paper. 

\section{Technical assumptions and preliminary results}\label{sec:pre}
A point $w=(x,y,z)$ satisfies the Karush-Kuhn-Tucker (KKT) conditions for \eqref{eq:ccp} if
\begin{subequations}\label{eq:kkt}
\begin{align}
0\in \nabla g(x) + \partial h(x) + A^\top y + \sum_{j=1}^m z_j \nabla f_j(x), \label{eq:kkt-1st}&\\
Ax = b, \label{eq:kkt-res}&\\
z_j\ge0, \, f_j(x)\le 0,\, z_jf_j(x)=0, \forall j\in [m]. \label{eq:kkt-cp}
\end{align}
\end{subequations}
If $w$ satisfies the above conditions, we call it a KKT point. For convex programs, the conditions in \eqref{eq:kkt} are sufficient for $x$ to be an optimal solution of \eqref{eq:ccp}. If a certain qualification condition (e.g., the Slater condition) holds, they are also necessary.

\subsection{Technical assumptions}
Throughout the paper, we assume the existence of a KKT point.
\begin{assumption}\label{assump-kkt}
There exists a point $w^*=(x^*,y^*,z^*)$ satisfying the KKT conditions in \eqref{eq:kkt}.
\end{assumption}
Under the above assumption, it follows from the convexity of $f_0$ that
\begin{equation}\label{eq:opt}
\Phi(x;w^*) \ge 0,\, \forall x,
\end{equation}
where $\Phi$ is defined in \eqref{eq:def-Phi}.

In addition, we make the following assumption, which holds if $\dom(h)$ is bounded.
\begin{assumption}\label{assump-lip}
There are constants $L_g,L_1,\ldots, L_m$ and $B_1,\ldots,B_m$ such that
\begin{align}
\|\nabla g(\hat{x})-\nabla g(\tilde{x})\|\le L_g\|\hat{x}-\tilde{x}\|,\,&\forall \hat{x}, \tilde{x} \in \dom(h),\label{eq:lip-g}\\
\|\nabla f_j(\hat{x})-\nabla f_j(\tilde{x})\|\le L_j\|\hat{x}-\tilde{x}\|,\,& \forall \hat{x}, \tilde{x} \in \dom(h), \forall j\in [m],\label{eq:lip-f}\\
\|\nabla f_j(x)\| \le B_j,\,& \forall x\in\dom(h), \forall j\in [m]. \label{eq:bd-fgrad}
\end{align} 
\end{assumption}

From the mid-point theorem, the boundedness of $\nabla f_j$ implies the Lipschitz continuity of $f_j$, i.e.,
\begin{equation}\label{eq:lip-fval}
|f_j(\hat{x})-f_j(\tilde{x})| \le B_j \|\hat{x}-\tilde{x}\|,\,\forall \hat{x}, \tilde{x} \in \dom(h), \forall j\in [m].
\end{equation}

\subsection{Preparatory lemmas}
In this subsection, we give several lemmas that will be used multiple times in our convergence analysis. First we show the Lipschitz continuity of $\nabla_x\Psi(w)$ with respect to $x$.
\begin{lemma}\label{lem:lip}
Under Assumption \ref{assump-lip}, we have
\begin{equation}\label{eq:lip-Psi}
\|\nabla_x \Psi(\hat{x}, z)-\nabla_x \Psi(x,z)\| \le L_\Psi(x,z) \|\hat{x}-x\|,\, \forall \hat{x}, x, z,
\end{equation}
where
\begin{equation}\label{eq:def-Lpsi-xz}
L_\Psi(x,z) = \sum_{j=1}^m \left(\beta B_j^2 + L_j\big[\beta f_j(x)+z_j\big]_+\right)
\end{equation}
\end{lemma}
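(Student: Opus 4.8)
The plan is to first derive an explicit formula for $\nabla_x\Psi(x,z)$ and then estimate the difference term by term. The starting observation is that, although $\psi_\beta(u,v)$ is defined piecewise, it is continuously differentiable in $u$ with the clean expression $\frac{\partial}{\partial u}\psi_\beta(u,v)=[\beta u+v]_+$: on the region $\beta u+v\ge 0$ the $u$-derivative of $uv+\frac{\beta}{2}u^2$ is $\beta u+v$, on $\beta u+v<0$ the $u$-derivative of $-\frac{v^2}{2\beta}$ is $0$, and these two expressions agree on the boundary $\beta u+v=0$, so the derivative is continuous and no subgradient subtleties arise. Applying the chain rule to each summand of $\Psi(x,z)=\sum_{j=1}^m\psi_\beta(f_j(x),z_j)$ then yields
$$\nabla_x\Psi(x,z)=\sum_{j=1}^m[\beta f_j(x)+z_j]_+\,\nabla f_j(x),$$
which already exhibits the coefficient $[\beta f_j(x)+z_j]_+$ appearing in $L_\Psi(x,z)$.

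Next I would bound $\|\nabla_x\Psi(\hat x,z)-\nabla_x\Psi(x,z)\|$ by the triangle inequality as a sum over $j$ of the norms $\|[\beta f_j(\hat x)+z_j]_+\nabla f_j(\hat x)-[\beta f_j(x)+z_j]_+\nabla f_j(x)\|$. The crucial step, and essentially the only place where care is needed, is the choice of the add-and-subtract decomposition of each summand. Writing $a_j=[\beta f_j(\hat x)+z_j]_+$ and $b_j=[\beta f_j(x)+z_j]_+$, the split $a_j\nabla f_j(\hat x)-b_j\nabla f_j(x)=(a_j-b_j)\nabla f_j(\hat x)+b_j\big(\nabla f_j(\hat x)-\nabla f_j(x)\big)$ is exactly what makes the bound come out in terms of $x$ alone: it pairs the $\hat x$-dependent coefficient $a_j$ with the uniformly bounded factor $\|\nabla f_j(\hat x)\|\le B_j$, while leaving only the $x$-dependent coefficient $b_j=[\beta f_j(x)+z_j]_+$ in front of the Lipschitz gradient difference. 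The opposite decomposition would instead leave $a_j$, which depends on $\hat x$, multiplying the gradient difference, and would fail to reproduce the $x$-only constant $L_\Psi(x,z)$.

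It then remains to bound each piece using Assumption \ref{assump-lip}. For the coefficient difference, $[\cdot]_+$ is $1$-Lipschitz, so $|a_j-b_j|\le\beta|f_j(\hat x)-f_j(x)|\le\beta B_j\|\hat x-x\|$ by the Lipschitz continuity \eqref{eq:lip-fval} of $f_j$; combined with $\|\nabla f_j(\hat x)\|\le B_j$ from \eqref{eq:bd-fgrad}, this gives $|a_j-b_j|\,\|\nabla f_j(\hat x)\|\le\beta B_j^2\|\hat x-x\|$. For the second piece, \eqref{eq:lip-f} gives $\|\nabla f_j(\hat x)-\nabla f_j(x)\|\le L_j\|\hat x-x\|$, so $b_j\|\nabla f_j(\hat x)-\nabla f_j(x)\|\le L_j[\beta f_j(x)+z_j]_+\|\hat x-x\|$. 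Adding the two bounds gives the $j$-th summand the factor $\big(\beta B_j^2+L_j[\beta f_j(x)+z_j]_+\big)\|\hat x-x\|$, and summing over $j$ produces precisely $L_\Psi(x,z)\|\hat x-x\|$, as claimed. Beyond selecting the right decomposition there is no genuine obstacle; the remainder is a direct application of the triangle inequality together with the assumed bounds.
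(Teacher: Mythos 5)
Your proof is correct and follows essentially the same route as the paper: the same identity $\frac{\partial}{\partial u}\psi_\beta(u,v)=[\beta u+v]_+$, the same add-and-subtract decomposition pairing the $\hat x$-dependent coefficient with the bounded gradient $\nabla f_j(\hat x)$, and the same termwise bounds via \eqref{eq:lip-fval}, \eqref{eq:bd-fgrad}, and \eqref{eq:lip-f}. Your explicit remark on why the opposite decomposition would fail to yield an $x$-only constant is a nice touch but does not change the argument.
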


\begin{proof}
First we notice that $\frac{\partial}{\partial u}\psi_\beta(u,v)=[\beta u+v]_+$, and thus for any $v$,
$$\left|\frac{\partial}{\partial u}\psi_\beta(\hat{u},v)-\frac{\partial}{\partial u}\psi_\beta(\tilde{u},v)\right|\le \beta|\hat{u}-\tilde{u}|,\,\forall \hat{u},\tilde{u}.$$
Let $h_j(x,z_j) = \psi_\beta(f_j(x),z_j),\,j=1,\ldots,m$. Then
\begin{align}\label{eq:der-lip-psi}
&\|\nabla_x h_j(\hat{x},z_j)-\nabla_x h_j(x,z_j)\|\nonumber\\
=&\big\|\frac{\partial}{\partial u}\psi_\beta(f_j(\hat{x}),z_j)\nabla f_j(\hat{x})-\frac{\partial}{\partial u}\psi_\beta(f_j(x),z_j)\nabla f_j(x)\big\|\nonumber\\
\le&\big\|\frac{\partial}{\partial u}\psi_\beta(f_j(\hat{x}),z_j)\nabla f_j(\hat{x})-\frac{\partial}{\partial u}\psi_\beta(f_j(x),z_j)\nabla f_j(\hat{x})\big\|\nonumber\\
&+\big\|\frac{\partial}{\partial u}\psi_\beta(f_j(x),z_j)\nabla f_j(\hat{x})-\frac{\partial}{\partial u}\psi_\beta(f_j(x),z_j)\nabla f_j(x)\big\|\nonumber\\
\le& \beta|f_j(\hat{x})-f_j(x)|\cdot\|\nabla f_j(\hat{x})\|+\big|\frac{\partial}{\partial u}\psi_\beta(f_j(x),z_j)\big|\cdot\|\nabla f_j(\hat{x})-\nabla f_j(x)\|\\
\le & \beta B_j^2\|\hat{x}-x\|+ L_j\big[\beta f_j(x)+z_j\big]_+ \cdot \|\hat{x}-x\|.\nonumber
\end{align}
Hence,
\begin{align*}
\|\nabla_x \Psi_\beta(\hat{x},z)-\nabla_x \Psi_\beta(x,z)\|
\le  \sum_{j=1}^m \|\nabla_x h_j(\hat{x},z_j)-\nabla_x h_j(x,z_j)\|
\le  L_\Psi(x,z)\|\hat{x}-x\|,
\end{align*}
which completes the proof.
\end{proof}

\begin{remark}
Note that the Lipschitz constant $L_\Psi(x,z)$ in \eqref{eq:lip-Psi} depends on the point $(x,z)$ and is not a universal constant. We will set its value at the iterate of the algorithm. Together with the next lemma, the inequality in \eqref{eq:lip-Psi} implies that a sufficient progress can be obtained after each $x$-update.
\end{remark}

\begin{lemma}\label{lem:lip-ineq}
For a continuously differentiable function $\phi(u)$ and a given $v$, if $\|\nabla \phi(u)-\nabla \phi(v)\| \le L_\phi(v)\|u-v\|,\,\forall u$, then
$$\phi(u) \le \phi(v) + \langle \nabla \phi(v), u-v \rangle + \frac{L_\phi(v)}{2}\|u-v\|^2.$$
\end{lemma}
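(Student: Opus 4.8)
The plan is to prove this quadratic upper bound via the fundamental theorem of calculus applied along the line segment joining $v$ to $u$. First I would write the difference $\phi(u)-\phi(v)$ as the integral
\begin{equation*}
\phi(u)-\phi(v) = \int_0^1 \langle \nabla \phi(v + t(u-v)), u-v\rangle\, dt,
\end{equation*}
which is valid since $\phi$ is continuously differentiable. Subtracting the linear term $\langle \nabla \phi(v), u-v\rangle = \int_0^1 \langle \nabla\phi(v), u-v\rangle\, dt$ then gives
\begin{equation*}
\phi(u) - \phi(v) - \langle \nabla\phi(v), u-v\rangle = \int_0^1 \langle \nabla\phi(v+t(u-v)) - \nabla\phi(v),\, u-v\rangle\, dt.
\end{equation*}

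Next I would bound the integrand using the Cauchy--Schwarz inequality followed by the hypothesis. The crucial observation is that the hypothesis is only a \emph{one-sided} condition anchored at the fixed point $v$, and it is applied here at the intermediate points $u' = v + t(u-v)$, for which $\|u' - v\| = t\|u-v\|$. Hence
\begin{equation*}
\langle \nabla\phi(v+t(u-v)) - \nabla\phi(v),\, u-v\rangle \le \|\nabla\phi(v+t(u-v)) - \nabla\phi(v)\|\cdot\|u-v\| \le L_\phi(v)\, t\, \|u-v\|^2.
\end{equation*}
Integrating this bound over $t\in[0,1]$ and using $\int_0^1 t\, dt = \tfrac12$ yields the claimed inequality
\begin{equation*}
\phi(u) \le \phi(v) + \langle \nabla\phi(v), u-v\rangle + \frac{L_\phi(v)}{2}\|u-v\|^2.
\end{equation*}

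This argument is essentially routine, so I do not expect a genuine obstacle; the only point requiring mild care is that the weaker, point-anchored Lipschitz hypothesis (rather than the usual two-point Lipschitz condition) is exactly what the integral representation needs, since every gradient difference appearing in the integrand is measured against $\nabla\phi(v)$ itself. This matters for the application in the paper, where the Lipschitz constant $L_\Psi(x,z)$ from Lemma~\ref{lem:lip} depends on the base point and is not uniform.
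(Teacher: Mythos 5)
Your proof is correct and is exactly the standard descent-lemma argument (fundamental theorem of calculus along the segment, Cauchy--Schwarz, then integrating the factor $t$ to get $\tfrac12$); the paper states this lemma without proof, and your observation that the point-anchored hypothesis suffices because every gradient difference in the integrand is measured against $\nabla\phi(v)$ is precisely the point that makes the lemma applicable with the iterate-dependent constant $L_\Psi(x,z)$.
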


The following result is easy to show (c.f., \cite[Prop. 2.3]{combettes2015stochastic}). It will be used for establishing iterate convergence of the proposed algorithm.
\begin{lemma}\label{lem:fejer}
Let $\{P^k\}$ be a sequence of SPD matrices, and there are SPD matrices $\underline{P}$ and $\overline{P}$ such that $\underline{P}\succeq P^k \succeq \overline{P}$. Let $\cW$ be a nonempty set. If the sequence $\{w^k\}$ satisfies
$$\|w^{k+1}-w\|_{P^{k+1}}^2 \le \|w^k-w\|_{P^k}^2,\,\forall w\in \cW,$$
and $\{w^k\}$ has a cluster point $\bar{w}$ in $\cW$, then $w^k$ converges to $\bar{w}$.
\end{lemma}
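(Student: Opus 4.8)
The plan is to treat this as a quasi-Fejér monotonicity argument (a variant of Opial's lemma), exploiting the uniform spectral bounds $\overline{P}\preceq P^k\preceq\underline{P}$, which keep all the weighted norms $\|\cdot\|_{P^k}$ uniformly equivalent to the Euclidean norm. First I would record this equivalence: with $c=\lambda_{\min}(\overline{P})>0$ and $C=\lambda_{\max}(\underline{P})<\infty$, the bounds give
\[
c\|v\|^2 \le \|v\|_{P^k}^2 \le C\|v\|^2,\quad\forall v,\ \forall k.
\]
This is the only place the matrix bounds enter, and it is precisely what prevents the weighted norm from degenerating as $k$ varies.

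Next I would fix $w=\bar w\in\cW$, the given cluster point, in the hypothesis. The inequality $\|w^{k+1}-\bar w\|_{P^{k+1}}^2\le\|w^k-\bar w\|_{P^k}^2$ shows that the scalar sequence $a_k:=\|w^k-\bar w\|_{P^k}^2$ is non-increasing; being bounded below by $0$, it converges to some limit $L\ge0$. This step already relies on the fact that the same vector $\bar w$ is compared across consecutive, possibly different, matrices $P^{k}$ and $P^{k+1}$, which is exactly the form in which the hypothesis is stated.

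Then I would identify $L$ using the cluster point. By assumption there is a subsequence with $w^{k_j}\to\bar w$ in Euclidean norm. Along it the upper bound gives $a_{k_j}=\|w^{k_j}-\bar w\|_{P^{k_j}}^2\le C\|w^{k_j}-\bar w\|^2\to0$, so the subsequential limit of $\{a_k\}$ is $0$; since the full sequence $\{a_k\}$ converges, necessarily $L=0$. Finally the lower bound yields $\|w^k-\bar w\|^2\le c^{-1}a_k\to0$, i.e. $w^k\to\bar w$, which completes the proof.

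As for difficulty, there is no serious obstacle: the result is a standard Fejér-type convergence lemma. The only point requiring care is the bookkeeping with index-dependent matrices, namely verifying that monotonicity of $a_k$ genuinely survives the change of weighting matrix from step to step (it does, because the hypothesis is phrased directly in terms of $P^{k+1}$ and $P^k$), and recognizing that the uniform positive-definiteness bounds are needed twice over—once to pass from the Euclidean subsequential convergence to $a_{k_j}\to0$ (hence $L=0$), and once to convert $a_k\to0$ back into Euclidean convergence of the whole sequence.
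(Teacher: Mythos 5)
Your proof is correct and complete; the paper itself omits the argument (deferring to a cited reference), and what you give is exactly the standard variable-metric Fej\'er monotonicity argument that reference relies on, including the correct reading of the slightly unusual bound $\underline{P}\succeq P^k\succeq\overline{P}$ (so that $c=\lambda_{\min}(\overline{P})$ and $C=\lambda_{\max}(\underline{P})$). Nothing further is needed.
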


The result below will be used to establish convergence rate of our algorithms. It is similar to a deterministic result in \cite{xu2017ialm} and can be shown in the same way. We omit its proof. 
\begin{lemma}\label{lem:pre-rate}
Assume $(x^*,y^*,z^*)$ is a KKT point of \eqref{eq:ccp}. Let $\bar{x}$ be a stochastic point such that for any $y$ and any $z\ge0$,
\begin{equation}\label{eq:pre-rate-cond}
\EE [\Phi(\bar{x}; x^*, y,z)] \le \alpha + c_1\|y\|^2+c_2\|z\|^2,
\end{equation}
where $\alpha$ and $c_1, c_2$ are nonnegative constants independent of $y$ and $z$. Then
\begin{align}
-\left(\alpha + 4c_1\|y^*\|^2+ 4c_2\sum_{j=1}^m(z_j^*)^2\right)\le \EE [f_0(\bar{x})-f_0(x^*)]\le \alpha,\label{eq:pre-rate-obj}\\
\EE \|A\bar{x}-b\| + \sum_{j=1}^m\EE [f_j(\bar{x})]_+ \le \alpha + c_1\big(1+\|y^*\|\big)^2 + c_2\sum_{j=1}^m\big(1+z_j^*\big)^2. \label{eq:pre-rate-res}
\end{align}
\end{lemma}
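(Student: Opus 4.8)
The plan is to convert the single bound \eqref{eq:pre-rate-cond} on the expected gap function $\Phi$ into the three scalar estimates \eqref{eq:pre-rate-obj}--\eqref{eq:pre-rate-res} by instantiating the free multipliers $y,z$ at a few carefully chosen values and by invoking the one-sided inequality \eqref{eq:opt}. Throughout I abbreviate $p=\EE[f_0(\bar x)-f_0(x^*)]$, $r=\EE[A\bar x-b]$ and $s_j=\EE[f_j(\bar x)]$, so that by the definition \eqref{eq:def-Phi} and linearity of expectation the hypothesis \eqref{eq:pre-rate-cond} reads $p+y^\top r+\sum_{j=1}^m z_j s_j\le \alpha+c_1\|y\|^2+c_2\|z\|^2$ for every $y$ and every $z\ge0$; I call this $(\star)$. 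I also record the consequence of \eqref{eq:opt}: since $\Phi(\bar x;w^*)\ge0$ holds for every realization, taking expectations gives $p+(y^*)^\top r+\sum_{j=1}^m z_j^* s_j\ge0$, which I call $(\star\star)$.

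The two objective estimates \eqref{eq:pre-rate-obj} come out immediately. For the upper bound I set $y=0,z=0$ in $(\star)$, which collapses to $p\le\alpha$. For the lower bound I feed $y=2y^*,z=2z^*$ (legitimate since $z^*\ge0$) into $(\star)$ to get $p+2\big[(y^*)^\top r+\sum_j z_j^* s_j\big]\le \alpha+4c_1\|y^*\|^2+4c_2\sum_j (z_j^*)^2$; combining this with $(\star\star)$, which says the bracketed quantity is at least $-p$, yields $-p\le \alpha+4c_1\|y^*\|^2+4c_2\sum_j(z_j^*)^2$, i.e. the claimed lower bound. Neither step sees the randomness, since both are one-sided and survive under expectation.

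For the feasibility estimate \eqref{eq:pre-rate-res} I first eliminate $p$: using the lower bound $p\ge-(y^*)^\top r-\sum_j z_j^* s_j$ from $(\star\star)$ inside $(\star)$ gives the multiplier-shifted inequality $(y-y^*)^\top r+\sum_j (z_j-z_j^*)s_j\le \alpha+c_1\|y\|^2+c_2\|z\|^2$, valid for all $y$ and all $z\ge0$. I then pick $y=y^*+r/\|r\|$ and, for each $j$, set $z_j=z_j^*+1$ when $s_j>0$ and $z_j=z_j^*$ otherwise; this keeps $z\ge0$, makes the left-hand side equal to $\|r\|+\sum_j [s_j]_+$, and bounds the quadratic terms by $c_1(1+\|y^*\|)^2+c_2\sum_j(1+z_j^*)^2$ via the triangle inequality. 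This is precisely \eqref{eq:pre-rate-res} once $\|r\|$ and $[s_j]_+$ are read as the feasibility residuals.

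The one place deserving care is exactly this last step, and I expect it to be the \emph{main obstacle}. Choosing the multipliers from the deterministic quantities $r$ and $s_j$ is what the literal hypothesis permits, but it produces $\|\EE[A\bar x-b]\|$ and $[\EE f_j(\bar x)]_+$ rather than the $\EE\|A\bar x-b\|$ and $\EE[f_j(\bar x)]_+$ appearing in \eqref{eq:pre-rate-res}, and these differ by Jensen's inequality in the unfavorable direction. To reach the stated form one should instead apply the per-realization inequality underlying \eqref{eq:pre-rate-cond} with realization-dependent multipliers — the unit vector $(A\bar x-b)/\|A\bar x-b\|$ for the affine block and the indicator of $\{f_j(\bar x)>0\}$ for each inequality — and only then take expectations, so that the cross terms integrate to $\EE\|A\bar x-b\|$ and $\EE[f_j(\bar x)]_+$. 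Verifying that this pathwise inequality indeed holds for all $y,z\ge0$ before averaging is the crux, and it is exactly the adaptation of the deterministic argument of \cite{xu2017ialm} to the stochastic setting.
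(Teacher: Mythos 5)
The paper itself does not prove this lemma --- it defers to the deterministic argument of \cite{xu2017ialm} --- so there is nothing to compare line by line; your multiplier-selection strategy is clearly the intended one. Your treatment of \eqref{eq:pre-rate-obj} is complete and correct: $y=0,z=0$ gives the upper bound, and $(y,z)=(2y^*,2z^*)$ combined with the expectation of \eqref{eq:opt} gives the lower bound. Your derivation of the feasibility estimate with the deterministic choices $y=y^*+r/\|r\|$ and $z_j=z_j^*+1$ on $\{s_j>0\}$ is also correct as far as it goes, and in the deterministic application (Theorem \ref{thm:rate-lalm}) it finishes the proof.

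The gap is in the last step for \eqref{eq:pre-rate-res}, and you have named it yourself but not closed it: what your argument proves is $\|\EE[A\bar x-b]\|+\sum_j[\EE f_j(\bar x)]_+\le \alpha+c_1(1+\|y^*\|)^2+c_2\sum_j(1+z_j^*)^2$, which by Jensen's inequality is weaker than the stated bound on $\EE\|A\bar x-b\|+\sum_j\EE[f_j(\bar x)]_+$, in the unfavorable direction. Your proposed repair --- instantiating the multipliers at the $\bar x$-measurable choices $(A\bar x-b)/\|A\bar x-b\|$ and the indicators of $\{f_j(\bar x)>0\}$ --- is not licensed by \eqref{eq:pre-rate-cond}, which quantifies only over deterministic $y$ and $z\ge 0$ with $\|y\|^2,\|z\|^2$ outside the expectation; and it cannot be recovered from that hypothesis. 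A concrete obstruction: take $g=h=0$, $m=0$, $A=1$, $b=0$, so $(x^*,y^*)=(0,0)$ is a KKT point and $\EE[\Phi(\bar x;x^*,y,z)]=y\,\EE[\bar x]$; if $\bar x=\pm M$ with probability $1/2$ each, then \eqref{eq:pre-rate-cond} holds with $\alpha=c_1=0$ while $\EE\|A\bar x-b\|=M$ is arbitrary. So the missing step is not merely unverified but unobtainable from the stated premise: to get \eqref{eq:pre-rate-res} one must strengthen the hypothesis to hold for $\bar x$-measurable $(y,z)$ (with $\EE\|y\|^2,\EE\|z\|^2$ on the right), and then check that the derivation feeding this lemma in Theorem \ref{thm:b-rate} actually supplies that stronger form. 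Your proof of \eqref{eq:pre-rate-obj} stands; your proof of \eqref{eq:pre-rate-res} establishes only the Jensen-weakened version.
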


\section{Linearized augmented Lagrangian method}\label{sec:lalm}
In this section, we propose a linearized augmented Lagrangian method (LALM). Different from the step in \eqref{eq:alm-x}, it updates $x$-variable by a single proximal gradient descent of the augmented Lagrangian function. The method is summarized in Algorithm \ref{alg:lalm}, where $\delta\ge 0$ is a constant and $$L_F^k=L_g+\beta\|A\|^2+L_\Psi(x^k,z^k)$$ with $L_\Psi$ defined in \eqref{eq:lip-Psi}. 

Note that the setting of $\eta^k$ is for simplicity of our analysis. Practically, one can choose it by starting from $\eta^{k-1}$ and then backtracking such that
\begin{equation}\label{eq:choose-eta}
F_\beta(x^{k+1},y^k, z^k) \le F_\beta(w^k) + \big\langle \nabla_x F_\beta(w^k), x^{k+1}-x^k\big\rangle + \frac{\eta^k}{2}\|x^{k+1}-x^k\|^2,
\end{equation}
and all our convergence results can still be shown.
When $\eta^k\ge L_F^k$, the above inequality always holds from Lemma \ref{lem:lip-ineq}. 

\begin{algorithm}
\caption{Linearized augmented Lagrangian method (LALM) for \eqref{eq:ccp}}\label{alg:lalm}
\DontPrintSemicolon
\textbf{Initialization:} choose $x^0, y^0, z^0$ and $\beta,\rho_y,\rho_z, \delta\ge0$; set $\eta^{-1}=0$\;
\For{$k=0,1,\ldots$}{
Let $\eta^k = \max(\eta^{k-1}, L_F^k+\delta)$\;
Perform the updates
\begin{subequations}
\begin{align}
x^{k+1}=&~\argmin_x h(x) + \big\langle \nabla_x F_\beta(w^k), x\big\rangle + \frac{\eta^k}{2}\|x-x^k\|^2,\label{eq:lalm-x}\\
y^{k+1}=&~y^k + \rho_y (Ax^{k+1}-b),\label{eq:lalm-y},\\
z^{k+1}_j=&~z_j^k+\rho_z \cdot\max\left(-\frac{z_j^k}{\beta}, f_j(x^{k+1})\right), j=1,\ldots, m.\label{eq:lalm-z}
\end{align}
\end{subequations}
}
\end{algorithm}

\subsection{Global convergence analysis}

To show the convergence results of Algorithm \ref{alg:lalm}, we need the following two lemmas, which can be found in \cite{xu2017ialm}.
\begin{lemma}\label{lem:multiplier}
Let $y$ and $z$ be updated by \eqref{eq:lalm-y} and \eqref{eq:lalm-z} respectively. Then for any $k$, it holds
\begin{align}
\frac{1}{2\rho_y}\big[\|y^{k+1}-y\|^2-\|y^k-y\|^2+\|y^{k+1}-y^k\|^2\big] - \langle y^{k+1}-y, r^{k+1}\rangle = &~ 0,\label{eq:lalm-yterm}\\
\frac{1}{2\rho_z}\big[\|z^{k+1}-z\|^2-\|z^k-z\|^2+\|z^{k+1}-z^k\|^2\big]- \sum_{j=1}^m (z_j^{k+1}-z_j)\cdot\max\big(-\frac{z_j^k}{\beta}, f_j(x^{k+1})\big)=& ~0,\label{eq:lalm-zterm}
\end{align}
where $r^k=Ax^k-b$.
\end{lemma}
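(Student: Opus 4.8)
The plan is to reduce both identities to a single elementary three-point (polarization) identity and then substitute the explicit multiplier updates; no convexity or optimality is needed, as these are exact equalities that follow purely from the update rules. First I would record the algebraic identity
\[
\|a-c\|^2 - \|b-c\|^2 + \|a-b\|^2 = 2\langle a-b,\, a-c\rangle,
\]
valid for any vectors $a,b,c$, which is verified at once by expanding the three squared norms and cancelling. This is the only computational ingredient.

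For \eqref{eq:lalm-yterm} I would apply the identity with $a=y^{k+1}$, $b=y^k$, $c=y$, which gives
\[
\frac{1}{2\rho_y}\big[\|y^{k+1}-y\|^2-\|y^k-y\|^2+\|y^{k+1}-y^k\|^2\big] = \frac{1}{\rho_y}\big\langle y^{k+1}-y^k,\, y^{k+1}-y\big\rangle.
\]
The update \eqref{eq:lalm-y} reads $y^{k+1}-y^k = \rho_y(Ax^{k+1}-b) = \rho_y\, r^{k+1}$, so the factor $\rho_y$ cancels and the right-hand side equals $\langle r^{k+1},\, y^{k+1}-y\rangle$. Subtracting $\langle y^{k+1}-y,\, r^{k+1}\rangle$ then yields $0$, as claimed.

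For \eqref{eq:lalm-zterm} the same identity is applied coordinatewise. Writing each squared norm as a sum over $j$ and using the identity in each coordinate with $(a,b,c)=(z_j^{k+1}, z_j^k, z_j)$ gives
\[
\frac{1}{2\rho_z}\big[\|z^{k+1}-z\|^2-\|z^k-z\|^2+\|z^{k+1}-z^k\|^2\big] = \frac{1}{\rho_z}\sum_{j=1}^m (z_j^{k+1}-z_j^k)(z_j^{k+1}-z_j).
\]
By the update \eqref{eq:lalm-z} the increment is $z_j^{k+1}-z_j^k = \rho_z\,\max\big(-z_j^k/\beta,\, f_j(x^{k+1})\big)$; substituting this, the factor $\rho_z$ cancels in each term and the right-hand side becomes exactly $\sum_{j=1}^m (z_j^{k+1}-z_j)\max\big(-z_j^k/\beta,\, f_j(x^{k+1})\big)$, so the claimed difference vanishes.

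I do not expect any genuine obstacle: both statements are identities expressing nothing more than the polarization formula combined with the defining increments of the dual steps. The only point requiring slight care is the $z$-identity, where one should keep the increment in its raw $\max(\cdot,\cdot)$ form rather than splitting into the two branches of $\psi_\beta$; the identity holds verbatim regardless of which branch is active, precisely because the updates \eqref{eq:lalm-y} and \eqref{eq:lalm-z} enter only through the increments $y^{k+1}-y^k$ and $z_j^{k+1}-z_j^k$.
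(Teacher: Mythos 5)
Your proof is correct and complete: both identities are exact consequences of the three-point polarization identity $\|a-c\|^2-\|b-c\|^2+\|a-b\|^2=2\langle a-b,a-c\rangle$ combined with the fact that the updates \eqref{eq:lalm-y} and \eqref{eq:lalm-z} prescribe the increments $y^{k+1}-y^k=\rho_y r^{k+1}$ and $z_j^{k+1}-z_j^k=\rho_z\max\big(-z_j^k/\beta, f_j(x^{k+1})\big)$, and your observation that no case analysis on the $\max$ is needed is exactly right. The paper itself omits the proof and cites \cite{xu2017ialm}; your argument is the standard one that reference uses, so there is nothing further to reconcile.
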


\begin{lemma}\label{lem:ineq-z}
For any $z\ge0$, we have
\begin{align}\label{eq:lalm-ineq-z}
&\frac{\beta-2\rho_z}{2\rho_z^2}\|z^{k+1}-z^k\|^2\le \Psi_\beta(x^{k+1},z^k)-\sum_{j=1}^m z_j f_j(x^{k+1})- \sum_{j=1}^m (z_j^{k+1}-z_j)\cdot\max\big(-\frac{z_j^k}{\beta}, f_j(x^{k+1})\big).
\end{align}
\end{lemma}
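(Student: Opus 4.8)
The plan is to peel \eqref{eq:lalm-ineq-z} apart into one scalar inequality per constraint and then watch the dependence on $\rho_z$ cancel. Both sides are sums over $j\in[m]$: the right-hand side because $\Psi_\beta(x^{k+1},z^k)=\sum_{j=1}^m\psi_\beta(f_j(x^{k+1}),z_j^k)$ and the two remaining terms are already written as sums, and the left-hand side because $\|z^{k+1}-z^k\|^2=\sum_{j=1}^m(z_j^{k+1}-z_j^k)^2$. Hence it suffices to prove the $j$-th summand inequality. To streamline the algebra I would set $u=f_j(x^{k+1})$, $v=z_j^k$, $s=z_j\ge0$, and $\mu=\max(-v/\beta,\,u)$, so that the update \eqref{eq:lalm-z} reads $z_j^{k+1}=v+\rho_z\mu$, and in particular $z_j^{k+1}-z_j^k=\rho_z\mu$.

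With this notation the left-hand side of the $j$-th inequality is $\frac{\beta-2\rho_z}{2\rho_z^2}(\rho_z\mu)^2=\frac{\beta}{2}\mu^2-\rho_z\mu^2$, while the right-hand side is $\psi_\beta(u,v)-su-(v+\rho_z\mu-s)\mu$. The key step is to expand $(v+\rho_z\mu-s)\mu=(v-s)\mu+\rho_z\mu^2$; the resulting $-\rho_z\mu^2$ term cancels exactly against the $-\rho_z\mu^2$ on the left, so every occurrence of $\rho_z$ disappears. After this cancellation the claim is equivalent to the $\rho_z$-free inequality
$$\tfrac{\beta}{2}\mu^2+su+(v-s)\mu\le\psi_\beta(u,v).$$

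What remains is a two-case verification governed by the sign of $\beta u+v$, exactly the sign that switches the definition of $\psi_\beta$ and the value of the maximum $\mu$. If $\beta u+v\ge0$, then $\mu=u$ and $\psi_\beta(u,v)=uv+\frac{\beta}{2}u^2$; substituting, the left-hand side reduces to $\frac{\beta}{2}u^2+uv$, so the inequality holds as an equality. If $\beta u+v<0$, then $\mu=-v/\beta$ and $\psi_\beta(u,v)=-\frac{v^2}{2\beta}$; substituting and simplifying, the left-hand side becomes $-\frac{v^2}{2\beta}+\frac{s}{\beta}(\beta u+v)$, so the inequality is equivalent to $\frac{s}{\beta}(\beta u+v)\le0$, which is immediate since $s=z_j\ge0$, $\beta>0$, and $\beta u+v<0$ in this branch. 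Summing over $j$ then yields \eqref{eq:lalm-ineq-z}.

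I do not anticipate a real obstacle: the substance is the exact cancellation of $\rho_z$ after expanding the cross term, followed by a routine case split. The only place that needs genuine care is keeping the sign of $\beta u+v$ consistent across the two branches of $\psi_\beta$ and the value of $\mu$, because that sign, together with $z_j\ge0$, is what supplies the sole true inequality (the complementary case being an identity). It is worth noting that the argument uses only $z_j\ge0$ and never the sign of $z_j^k$, so the bound is valid for any iterate $z^k$.
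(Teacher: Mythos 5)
Your proof is correct. Note that the paper does not actually prove Lemma \ref{lem:ineq-z}---it is imported from \cite{xu2017ialm}---so there is no in-paper argument to compare against; your componentwise reduction, the exact cancellation of the $\rho_z\mu^2$ terms after expanding $(v+\rho_z\mu-s)\mu$, and the two-case check against the definition of $\psi_\beta$ (equality when $\beta u+v\ge0$, and $\tfrac{s}{\beta}(\beta u+v)\le0$ when $\beta u+v<0$) all verify, with your observation that only $z_j\ge0$ is needed (not $z_j^k\ge0$) also being accurate. This is a clean, self-contained substitute for the external citation.
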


Using the above two lemmas, we establish a fundamental result on Algorithm \ref{alg:lalm}.
\begin{theorem}[One-iteration progress of LALM]\label{thm:1iter}
Let $\{w^k\}$ be the sequence generated from Algorithm \ref{alg:lalm}. Then for any $x$ such that $Ax=b$ and $f_j(x)\le 0,\,\forall j\in [m]$, any $y$, and any $z\ge0$, it holds that
\begin{align}\label{eq:lalm-ineq2}
&\Phi(x^{k+1}; w)+\frac{\eta^k}{2}\|x^{k+1}-x\|^2 + \frac{1}{2\rho_y}\|y^{k+1}-y\|^2+ \frac{1}{2\rho_z}\|z^{k+1}-z\|^2\nonumber\\
&+\frac{\beta-\rho_y}{2}\|r^{k+1}\|^2+\frac{\beta-\rho_z}{2\rho_z^2}\|z^{k+1}-z^k\|^2+ \frac{1}{2}\|x^{k+1}-x^k\|^2_{(\eta^k-L_g-L_\Psi^k)I-\beta A^\top A}\nonumber\\
\le &\frac{\eta^k}{2}\|x^k-x\|^2 -\frac{\beta}{2}\|r^k\|^2+ \frac{1}{2\rho_y}\|y^k-y\|^2+ \frac{1}{2\rho_z}\|z^k-z\|^2,
\end{align}
where $\Phi$ is defined in \eqref{eq:def-Phi}, and $L_\Psi^k = L_\Psi(x^k, z^k)$ with $L_\Psi$ defined in \eqref{eq:def-Lpsi-xz}.
\end{theorem}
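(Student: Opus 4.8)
The plan is to bound each term of $\Phi(x^{k+1};w)$ separately and let the linear and cross terms telescope, combining the optimality of the proximal step \eqref{eq:lalm-x} with the multiplier identities of Lemma~\ref{lem:multiplier} and the inequality of Lemma~\ref{lem:ineq-z}. First I would record the three-point inequality for \eqref{eq:lalm-x}: since its objective is $\eta^k$-strongly convex and minimized at $x^{k+1}$, for every $x$
\[ h(x^{k+1})-h(x)+\langle\nabla_x F_\beta(w^k),x^{k+1}-x\rangle+\tfrac{\eta^k}{2}\|x^{k+1}-x^k\|^2+\tfrac{\eta^k}{2}\|x^{k+1}-x\|^2\le\tfrac{\eta^k}{2}\|x^k-x\|^2. \]
I would then expand $\nabla_x F_\beta(w^k)=\nabla g(x^k)+A^\top y^k+\beta A^\top r^k+\nabla_x\Psi_\beta(x^k,z^k)$ and use the feasibility $Ax=b$ to rewrite $\langle A^\top y^k,x^{k+1}-x\rangle=\langle y^k,r^{k+1}\rangle$ and $\beta\langle A^\top r^k,x^{k+1}-x\rangle=\beta\langle r^k,r^{k+1}\rangle$.

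Next I would bound the three nonlinear pieces of $\Phi(x^{k+1};w)$ (recall \eqref{eq:def-Phi}). Splitting $g(x^{k+1})-g(x)=[g(x^{k+1})-g(x^k)]+[g(x^k)-g(x)]$ and applying Lemma~\ref{lem:lip-ineq} to the first bracket and convexity to the second gives $g(x^{k+1})-g(x)\le\langle\nabla g(x^k),x^{k+1}-x\rangle+\tfrac{L_g}{2}\|x^{k+1}-x^k\|^2$. For $\sum_j z_jf_j(x^{k+1})$ I would feed \eqref{eq:lalm-zterm} into Lemma~\ref{lem:ineq-z}; the two $\|z^{k+1}-z^k\|^2$ contributions, with coefficients $-\tfrac{1}{2\rho_z}$ and $-\tfrac{\beta-2\rho_z}{2\rho_z^2}$, merge into $-\tfrac{\beta-\rho_z}{2\rho_z^2}$, leaving
\[ \sum_{j=1}^m z_jf_j(x^{k+1})\le\Psi_\beta(x^{k+1},z^k)-\tfrac{1}{2\rho_z}\|z^{k+1}-z\|^2+\tfrac{1}{2\rho_z}\|z^k-z\|^2-\tfrac{\beta-\rho_z}{2\rho_z^2}\|z^{k+1}-z^k\|^2. \]
Finally I would bound $\Psi_\beta(x^{k+1},z^k)$ exactly as for $g$, now using Lemma~\ref{lem:lip} to supply the constant $L_\Psi^k$, together with Lemma~\ref{lem:lip-ineq} and convexity of $x\mapsto\Psi_\beta(x,z^k)$, obtaining $\Psi_\beta(x^{k+1},z^k)\le\Psi_\beta(x,z^k)+\langle\nabla_x\Psi_\beta(x^k,z^k),x^{k+1}-x\rangle+\tfrac{L_\Psi^k}{2}\|x^{k+1}-x^k\|^2$, and then discarding $\Psi_\beta(x,z^k)\le0$. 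This is the only place feasibility of $x$ enters the primal estimate: a two-case check on the definition of $\psi_\beta$ shows $\psi_\beta(u,v)\le0$ whenever $u\le0$, and $f_j(x)\le0$.

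What remains is bookkeeping. Substituting the prox inequality absorbs the inner products $\langle\nabla g(x^k),x^{k+1}-x\rangle$ and $\langle\nabla_x\Psi_\beta(x^k,z^k),x^{k+1}-x\rangle$ and leaves the residuals $\langle y-y^k,r^{k+1}\rangle$ and $-\beta\langle r^k,r^{k+1}\rangle$. I would treat the first with \eqref{eq:lalm-yterm} and $y^{k+1}-y^k=\rho_y r^{k+1}$, producing $-\tfrac{1}{2\rho_y}\|y^{k+1}-y\|^2+\tfrac{1}{2\rho_y}\|y^k-y\|^2+\tfrac{\rho_y}{2}\|r^{k+1}\|^2$, and expand $-\beta\langle r^k,r^{k+1}\rangle=-\tfrac{\beta}{2}\|r^{k+1}\|^2-\tfrac{\beta}{2}\|r^k\|^2+\tfrac{\beta}{2}\|A(x^{k+1}-x^k)\|^2$ by polarization, using $r^{k+1}-r^k=A(x^{k+1}-x^k)$. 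The two $\|r^{k+1}\|^2$ pieces merge into $-\tfrac{\beta-\rho_y}{2}\|r^{k+1}\|^2$, while the scalar $\|x^{k+1}-x^k\|^2$ coefficient $-\tfrac{\eta^k}{2}+\tfrac{L_g}{2}+\tfrac{L_\Psi^k}{2}$ together with $\tfrac{\beta}{2}\|A(x^{k+1}-x^k)\|^2$ assembles into $-\tfrac12\|x^{k+1}-x^k\|^2_{(\eta^k-L_g-L_\Psi^k)I-\beta A^\top A}$. Moving the negative quantities to the left-hand side yields \eqref{eq:lalm-ineq2}.

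I expect the main difficulty to be sign and coefficient bookkeeping rather than any isolated hard estimate: in particular verifying $\Psi_\beta(x,z^k)\le0$ at feasible $x$, and ensuring the three distinct quadratics $\|z^{k+1}-z^k\|^2$, $\|r^{k+1}\|^2$, and $\|A(x^{k+1}-x^k)\|^2$ coalesce with exactly the coefficients $\tfrac{\beta-\rho_z}{2\rho_z^2}$, $\tfrac{\beta-\rho_y}{2}$, and $-\beta A^\top A$ claimed in the statement.
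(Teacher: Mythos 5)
Your proposal is correct and follows essentially the same route as the paper's proof: the three-point inequality you write for the prox step is exactly what the paper obtains by combining the optimality inclusion \eqref{eq:opt-cond-k} with convexity of $h$ and the identity for $\langle x^{k+1}-x,\eta^k(x^{k+1}-x^k)\rangle$, and the remaining bookkeeping (descent-lemma-plus-convexity bounds for $g$ and $\Psi_\beta(\cdot,z^k)$, Lemmas \ref{lem:multiplier} and \ref{lem:ineq-z} for the multiplier terms, polarization of $\beta\langle r^k,r^{k+1}\rangle$, and dropping $\Psi_\beta(x,z^k)\le0$) matches the paper term for term, with all coefficients coming out right. Your explicit two-case verification that $\psi_\beta(u,v)\le0$ whenever $u\le0$ is a small bonus the paper leaves implicit.
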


\begin{proof}
From the update in \eqref{eq:lalm-x}, it follows that
\begin{equation}\label{eq:opt-cond-k}
0 \in \partial h(x^{k+1}) + \nabla g(x^k) + A^\top y^k + \beta A^\top r^k + \nabla_x \Psi(x^k,z^k) + \eta^k (x^{k+1}-x^k).
\end{equation}
By the convexity of $h$, we have
\begin{equation}\label{eq:ineq-h}
\left\langle x^{k+1}-x, \tilde{\nabla} h(x^{k+1})\right\rangle \ge h(x^{k+1})-h(x),
\end{equation}
From the convexity of $g$ and $\Psi(\cdot,z)$, and also Lemmas \ref{lem:lip} and \ref{lem:lip-ineq}, we have
\begin{align}\label{eq:ineq-g-psi}
&~\left\langle x^{k+1}-x, \nabla g(x^k) + \nabla_x \Psi(x^k,z^k) \right\rangle\cr
=&~\left\langle x^{k+1}-x^k, \nabla g(x^k) + \nabla_x \Psi(x^k,z^k) \right\rangle + \left\langle x^{k}-x, \nabla g(x^k) + \nabla_x \Psi(x^k,z^k) \right\rangle\cr
 \ge & ~g(x^{k+1})+\Psi(x^{k+1},z^k) - g(x^k)-\Psi(x^k,z^k)-\frac{L_g+L_\Psi^k}{2}\|x^{k+1}-x^k\|^2\cr
 &~+g(x^k)+\Psi(x^k,z^k)-g(x)-\Psi(x,z^k)\cr
 =&~g(x^{k+1})+\Psi(x^{k+1},z^k) - g(x)-\Psi(x,z^k)-\frac{L_g+L_\Psi^k}{2}\|x^{k+1}-x^k\|^2.
\end{align}
For $x$ such that $Ax=b$, it holds that
\begin{align}\label{eq:ineq-y-r}
&~\langle x^{k+1}-x, A^\top y^k + \beta A^\top r^k \rangle\cr 
=&~ \langle r^{k+1}, y^{k+1}-\rho_y r^{k+1}+\beta r^k\rangle\cr
= &~y^\top r^{k+1} + \langle y^{k+1}-y, r^{k+1}\rangle +(\beta-\rho_y)\|r^{k+1}\|^2 + \beta \langle r^{k+1},r^k- r^{k+1}\rangle\cr
=&~y^\top r^{k+1} + \langle y^{k+1}-y, r^{k+1}\rangle +(\beta-\rho_y)\|r^{k+1}\|^2 - \frac{\beta}{2} \left[ \|r^{k+1}\|^2 - \|r^k\|^2 + \|r^{k+1}- r^{k}\|^2\right].
\end{align}
Adding \eqref{eq:ineq-h}, \eqref{eq:ineq-g-psi}, \eqref{eq:ineq-y-r}, and the following equation
$$\big\langle x^{k+1}-x, \eta^k(x^{k+1}-x^k)\big\rangle=\frac{\eta^k}{2}\left[\|x^{k+1}-x\|^2-\|x^k-x\|^2+\|x^{k+1}-x^k\|^2\right],$$
we have from \eqref{eq:opt-cond-k} that
\begin{align}\label{eq:lalm-ineq-a}
&f_0(x^{k+1})-f_0(x)+\Psi(x^{k+1},z^k)-\Psi(x,z^k)+y^\top r^{k+1}+ \langle y^{k+1}-y, r^{k+1}\rangle\cr
&  -\frac{\beta}{2} \left[\|r^{k+1}\|^2 + \|r^{k+1}- r^{k}\|^2\right] +(\beta-\rho_y)\|r^{k+1}\|^2+\frac{\eta^k}{2}\|x^{k+1}-x\|^2 + \frac{\eta^k-L_g-L_\Psi^k}{2}\|x^{k+1}-x^k\|^2\cr
\le &\frac{\eta^k}{2}\|x^k-x\|^2-\frac{\beta}{2}\|r^k\|^2.
\end{align}
The desired result is obtained by noting $\Psi(x,z^k) \le 0$, adding \eqref{eq:lalm-yterm}, \eqref{eq:lalm-zterm}, and \eqref{eq:lalm-ineq-z} to the above inequality, and rearranging terms.
\end{proof}

The next lemma shows the upper boundedness of $\eta^k$.
\begin{lemma}\label{lem:eta-up-bd}
Let $\{w^k\}$ be the sequence generated from Algorithm \ref{alg:lalm} with $z_j^0\ge0,\forall j\in [m]$. If $\rho_y, \rho_z \in (0,\beta]$, then $\eta^k \le \bar{\eta},\,\forall k\ge0$, where $\bar{\eta}$ is a constant satisfying
\begin{equation}\label{eq:bar-eta-cond}
\begin{aligned}
\bar{\eta} \ge &~\delta + L_g + \beta\|A\|^2 + \beta\sum_{j=1}^m B_j^2 + \beta\sum_{j=1}^m B_j L_j\left(\|x^0-x^*\|+\frac{\|y^0-y^*\|}{\sqrt{\rho_y\eta^0}}+\frac{\|z^0-z^*\|}{\sqrt{\rho_z\eta^0}}\right)\\
&+\sqrt{\sum_{j=1}^m L_j^2}\left(\sqrt{\rho_z\bar{\eta}}\|x^0-x^*\|+\sqrt{\frac{\rho_z\bar{\eta}}{\rho_y\eta^0}}\|y^0-y^*\|+\|z^*\|+\max\big(1,\sqrt{\frac{\bar{\eta}}{\eta^0}}\big)\|z^0-z^*\|\right).
\end{aligned}
\end{equation}
\end{lemma}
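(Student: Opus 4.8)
The plan is to argue by induction on $k$ that $\eta^k\le\bar\eta$. Since $\eta^k=\max(\eta^{k-1},L_F^k+\delta)$ is nondecreasing with $\eta^{-1}=0$, under the hypothesis $\eta^{k-1}\le\bar\eta$ it suffices to show $L_F^k+\delta\le\bar\eta$, i.e. to bound $L_\Psi(x^k,z^k)$ from \eqref{eq:def-Lpsi-xz}. Two reductions make this tractable. First, a one-line induction using the update \eqref{eq:lalm-z} and $\rho_z\le\beta$ shows $z_j^k\ge0$ for all $k,j$, since $z_j^{k+1}\ge z_j^k(1-\rho_z/\beta)\ge0$. Second, because $f_j(x^*)\le0$, the Lipschitz bound \eqref{eq:lip-fval} gives $[\beta f_j(x^k)+z_j^k]_+\le\beta B_j\|x^k-x^*\|+z_j^k\le\beta B_j\|x^k-x^*\|+z_j^*+|z_j^k-z_j^*|$; summing over $j$ and applying Cauchy--Schwarz to $\sum_j L_j z_j^*$ and $\sum_j L_j|z_j^k-z_j^*|$ reduces the whole problem to uniformly bounding $\|x^k-x^*\|$ and $\|z^k-z^*\|$.

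To obtain those distance bounds I would specialize Theorem \ref{thm:1iter} to $w=w^*=(x^*,y^*,z^*)$, which is admissible since $Ax^*=b$, $f_j(x^*)\le0$ and $z^*\ge0$. On the left I drop $\Phi(x^{k+1};w^*)\ge0$ (by \eqref{eq:opt}), the terms $\frac{\beta-\rho_y}{2}\|r^{k+1}\|^2$ and $\frac{\beta-\rho_z}{2\rho_z^2}\|z^{k+1}-z^k\|^2$ (nonnegative as $\rho_y,\rho_z\le\beta$), and the quadratic form in $x^{k+1}-x^k$, which is nonnegative because $\eta^k\ge L_F^k+\delta$ forces $(\eta^k-L_g-L_\Psi^k)I-\beta A^\top A\succeq\delta I\succeq0$; on the right I drop $-\frac{\beta}{2}\|r^k\|^2$. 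Writing $a_k=\|x^k-x^*\|^2$ and $T_k=\frac{1}{2\rho_y}\|y^k-y^*\|^2+\frac{1}{2\rho_z}\|z^k-z^*\|^2$, this leaves the clean recursion $\frac{\eta^k}{2}a_{k+1}+T_{k+1}\le\frac{\eta^k}{2}a_k+T_k$.

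The main obstacle is that this recursion carries the varying coefficient $\eta^k$, so it does not telescope directly. I would resolve it with the Lyapunov quantity $V_k=\frac{\eta^{k-1}}{2}a_k+T_k$: the recursion reads $V_{k+1}\le V_k+\frac{\eta^k-\eta^{k-1}}{2}a_k$, and bounding $a_k\le\frac{2}{\eta^{k-1}}V_k$ turns it into the multiplicative estimate $V_{k+1}\le\frac{\eta^k}{\eta^{k-1}}V_k$ for $k\ge1$, which telescopes to $V_k\le\frac{\eta^{k-1}}{\eta^0}V_1$. Since the recursion at $k=0$ gives $V_1\le\frac{\eta^0}{2}\|x^0-x^*\|^2+T_0$, and this is at most $\frac{\eta^0}{2}D_0^2$ with $D_0=\|x^0-x^*\|+\frac{\|y^0-y^*\|}{\sqrt{\rho_y\eta^0}}+\frac{\|z^0-z^*\|}{\sqrt{\rho_z\eta^0}}$ (dropping cross terms yields $\eta^0 D_0^2\ge\eta^0\|x^0-x^*\|^2+\frac{1}{\rho_y}\|y^0-y^*\|^2+\frac{1}{\rho_z}\|z^0-z^*\|^2$), I conclude $V_k\le\frac{\eta^{k-1}}{2}D_0^2$.

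Reading off the two pieces of $V_k$ then yields $\|x^k-x^*\|\le D_0$ and $\|z^k-z^*\|\le\sqrt{\rho_z\eta^{k-1}}\,D_0\le\sqrt{\rho_z\bar\eta}\,D_0$, where the last step invokes the inductive hypothesis $\eta^{k-1}\le\bar\eta$ (for $k=0$ the trivial bounds suffice, which is exactly what the factor $\max(1,\sqrt{\bar\eta/\eta^0})$ in \eqref{eq:bar-eta-cond} absorbs). Substituting these into the $L_\Psi$ estimate from the first paragraph and expanding $\sqrt{\rho_z\bar\eta}\,D_0$ reproduces the right-hand side of \eqref{eq:bar-eta-cond}, so $L_F^k+\delta\le\bar\eta$ and hence $\eta^k\le\bar\eta$, closing the induction. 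The only delicate points are verifying the sign of the quadratic form and choosing $V_k$ with the shifted coefficient $\eta^{k-1}$ so that the telescoping is exact; everything else is bookkeeping with Cauchy--Schwarz.
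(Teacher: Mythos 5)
Your proposal is correct and follows essentially the same route as the paper: nonnegativity of $z^k$, the bound \eqref{eq:fzj} on $L_\Psi^k$ via \eqref{eq:lip-fval} and Cauchy--Schwarz, specializing \eqref{eq:lalm-ineq2} to $w=w^*$ and dropping the nonnegative terms, telescoping, and closing the induction with $\eta^{K-1}\le\bar\eta$ only in the $\|z^K-z^*\|$ bound. Your Lyapunov quantity $V_k$ is just the paper's normalized quantity in \eqref{eq:tele-ineq2} multiplied by $\eta^{k-1}$, so the multiplicative recursion $V_{k+1}\le\frac{\eta^k}{\eta^{k-1}}V_k$ is the same telescoping in different bookkeeping.
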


\begin{proof}
Since $z_j^0\ge0,\forall j\in[m]$, we have $z_j^k\ge 0, \forall j\in[m]$ from the update of $z$ and the condition $\rho_z\in (0,\beta]$. Note $f_j(x^k) \le f_j(x^*) + B_j \|x^k-x^*\|\le B_j \|x^k-x^*\|$. It follows from the increasing monotonicity of $[a]_+$ that
\begin{equation*}
\sum_{j=1}^m L_j [\beta f_j(x^k)+z_j^k]_+ \le \sum_{j=1}^m \big(\beta B_j L_j \|x^k-x^*\| + L_j z_j^k\big),
\end{equation*}
and thus
\begin{align}\label{eq:fzj}
L_\Psi^k \le &~ \beta\sum_{j=1}^m B_j^2 + \sum_{j=1}^m \big(\beta B_j L_j \|x^k-x^*\| + L_j z_j^k\big)\cr
\le &~ \beta\sum_{j=1}^m B_j^2 + \beta \sum_{j=1}^m B_j L_j \|x^k-x^*\| + \sqrt{\sum_{j=1}^m L_j^2} (\|z^*\|+\|z^k-z^*\|).
\end{align}

We next show the desired result by induction. First, the result for $k=0$ directly follows from \eqref{eq:bar-eta-cond} and \eqref{eq:fzj}. Assume $\eta^k\le \bar{\eta},\,\forall k \le K-1.$ Then letting $w=w^*$ in \eqref{eq:lalm-ineq2}, we have from \eqref{eq:opt} and by dropping nonnegative terms on the left hand side that
\begin{align}\label{eq:tele-ineq}
&\frac{\eta^k}{2}\|x^{k+1}-x^*\|^2+ \frac{1}{2\rho_y}\|y^{k+1}-y^*\|^2+ \frac{1}{2\rho_z}\|z^{k+1}-z^*\|^2\nonumber\\
\le &\frac{\eta^k}{2}\|x^k-x^*\|^2 + \frac{1}{2\rho_y}\|y^k-y^*\|^2+ \frac{1}{2\rho_z}\|z^k-z^*\|^2.
\end{align}
Since $\eta^{k+1}\ge \eta^k$, dividing by $\eta^k$ on both sides of the above inequality yields
\begin{align}\label{eq:tele-ineq2}
&\frac{1}{2}\|x^{k+1}-x^*\|^2+ \frac{1}{2\rho_y\eta^{k+1}}\|y^{k+1}-y^*\|^2+ \frac{1}{2\rho_z\eta^{k+1}}\|z^{k+1}-z^*\|^2\nonumber\\
\le &\frac{1}{2}\|x^k-x^*\|^2 + \frac{1}{2\rho_y\eta^k}\|y^k-y^*\|^2+ \frac{1}{2\rho_z\eta^k}\|z^k-z^*\|^2.
\end{align}

Repeatedly using \eqref{eq:tele-ineq2} and also from \eqref{eq:tele-ineq}, we have
\begin{align*}
&~\frac{1}{2}\|x^K-x^*\|^2 + \frac{1}{2\rho_y\eta^{K-1}}\|y^K-y^*\|^2+ \frac{1}{2\rho_z\eta^{K-1}}\|z^K-z^*\|^2\\
\le& ~\frac{1}{2}\|x^0-x^*\|^2 + \frac{1}{2\rho_y\eta^0}\|y^0-y^*\|^2+ \frac{1}{2\rho_z\eta^0}\|z^0-z^*\|^2.
\end{align*}
The above inequality together with $\eta^{K-1}\le \bar{\eta}$ implies 
$$\|x^K-x^*\|\le \|x^0-x^*\|+\frac{\|y^0-y^*\|}{\sqrt{\rho_y\eta^0}}+\frac{\|z^0-z^*\|}{\sqrt{\rho_z\eta^0}}$$
and 
$$\|z^K-z^*\|\le \sqrt{\rho_z\bar{\eta}}\|x^0-x^*\|+\sqrt{\frac{\rho_z\bar{\eta}}{\rho_y\eta^0}}\|y^0-y^*\|+\sqrt{\frac{\bar{\eta}}{\eta^0}}\|z^0-z^*\|.$$
Hence, $\bar{\eta} \ge L_F^K+\delta$ from \eqref{eq:bar-eta-cond}, \eqref{eq:fzj}, and the above two inequalities. This completes the proof.
\end{proof}

We are now ready to show our main convergence and rate results.
\begin{theorem}[Iterate convergence of LALM]\label{thm:convg-lalm}
Under Assumptions \ref{assump-kkt} and \ref{assump-lip}, let $\{w^k\}$ be the sequence generated from Algorithm \ref{alg:lalm} with any $x^0, y^0$, and $z^0_j\ge0,\,\forall j\in[m]$. If $\rho_y,\rho_z\in (0,\beta)$ and $\delta>0$, 
 then $w^k$ converges to a KKT point $\bar{w}=(\bar{x},\bar{y},\bar{z})$ of \eqref{eq:ccp}.
\end{theorem}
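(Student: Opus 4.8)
The plan is to combine a Fej\'er-type monotonicity of $\{w^k\}$ with respect to the set $\cW$ of all KKT points with a proof that every cluster point of $\{w^k\}$ lies in $\cW$, and then to invoke Lemma~\ref{lem:fejer} to upgrade subsequential convergence to convergence of the whole sequence. The monotonicity is essentially already isolated inside the proof of Lemma~\ref{lem:eta-up-bd}: taking $w=w^*$ for an arbitrary KKT point $w^*$ in Theorem~\ref{thm:1iter}, discarding $\Phi(x^{k+1};w^*)\ge 0$ (by \eqref{eq:opt}) together with the remaining nonnegative terms, and dividing by $\eta^k$ gives exactly inequality \eqref{eq:tele-ineq2}, i.e. $\|w^{k+1}-w^*\|_{Q^{k+1}}^2\le\|w^k-w^*\|_{Q^k}^2$ with $Q^k=\mathrm{blockdiag}(I,\tfrac{1}{\rho_y\eta^k}I,\tfrac{1}{\rho_z\eta^k}I)$. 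Because $\delta>0$ forces $\eta^0\ge\delta>0$ while Lemma~\ref{lem:eta-up-bd} gives $\eta^k\le\bar\eta$, the matrices $Q^k$ are sandwiched between two fixed SPD matrices, so $\{w^k\}$ is bounded and the hypotheses of Lemma~\ref{lem:fejer} will be met once a cluster point in $\cW$ is produced.

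Next I would extract, again from Theorem~\ref{thm:1iter} with $w=w^*$, the quantitative decrease. Since $\eta^k=\max(\eta^{k-1},L_F^k+\delta)$ guarantees $\eta^k-L_g-L_\Psi^k\ge\beta\|A\|^2+\delta$, the weighting matrix satisfies $(\eta^k-L_g-L_\Psi^k)I-\beta A^\top A\succeq\delta I$; and $\rho_y,\rho_z\in(0,\beta)$ make the coefficients of $\|r^{k+1}\|^2$ and $\|z^{k+1}-z^k\|^2$ strictly positive. Telescoping the resulting inequality — absorbing the non-telescoping remainder $\tfrac{\eta^k-\eta^{k-1}}{2}\|x^k-x^*\|^2$, which is summable because $\{\eta^k\}$ is monotone and bounded and $\{x^k\}$ is bounded — yields
\begin{equation*}
\sum_{k}\Big(\tfrac{\beta}{2}\|r^k\|^2+\tfrac{\beta-\rho_y}{2}\|r^{k+1}\|^2+\tfrac{\beta-\rho_z}{2\rho_z^2}\|z^{k+1}-z^k\|^2+\tfrac{\delta}{2}\|x^{k+1}-x^k\|^2\Big)<\infty .
\end{equation*}
Hence $\|r^k\|\to0$, $\|x^{k+1}-x^k\|\to0$, and $\|z^{k+1}-z^k\|\to0$.

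The crux, and the step I expect to be the main obstacle, is to show that any subsequential limit $\bar w=(\bar x,\bar y,\bar z)$ of the bounded sequence is a KKT point. Feasibility $A\bar x=b$ is immediate from $r^k\to0$. For the inequality block I would pass to the limit in the $z$-update \eqref{eq:lalm-z}: since $\|z^{k+1}-z^k\|\to0$ and $\|x^{k+1}-x^k\|\to0$, along the subsequence $\max\big(-\bar z_j/\beta,f_j(\bar x)\big)=0$, which delivers at once $\bar z_j\ge0$, $f_j(\bar x)\le0$, and $\bar z_jf_j(\bar x)=0$, i.e. \eqref{eq:kkt-cp}, and moreover the key identity $[\beta f_j(\bar x)+\bar z_j]_+=\bar z_j$. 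The delicate part is \eqref{eq:kkt-1st}: I would pass to the limit in the subproblem optimality condition \eqref{eq:opt-cond-k}, written as $\xi^{k+1}\in\partial h(x^{k+1})$ with $\xi^{k+1}$ the negative of the smooth terms. Continuity of $\nabla g$ and of $\nabla_x\Psi(x,z)=\sum_{j}[\beta f_j(x)+z_j]_+\nabla f_j(x)$, together with $r^k\to0$, $\eta^k\|x^{k+1}-x^k\|\to0$ (boundedness of $\eta^k$), and the identity $[\beta f_j(\bar x)+\bar z_j]_+=\bar z_j$, shows $\xi^{k+1}\to-\nabla g(\bar x)-A^\top\bar y-\sum_j\bar z_j\nabla f_j(\bar x)$; closedness of the graph of $\partial h$ then places this limit in $\partial h(\bar x)$, which is exactly \eqref{eq:kkt-1st}. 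Thus $\bar w\in\cW$.

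Finally, with $\cW$ the (nonempty) KKT set, the Fej\'er inequality of the first paragraph holding for every $w\in\cW$, the matrices $Q^k$ uniformly bounded between SPD matrices, and the cluster point $\bar w\in\cW$ just exhibited, Lemma~\ref{lem:fejer} yields $w^k\to\bar w$, completing the proof.
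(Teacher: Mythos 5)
Your proposal is correct and follows essentially the same route as the paper: Fej\'er monotonicity from Theorem~\ref{thm:1iter} with $w=w^*$ and \eqref{eq:opt}, summability of the nonnegative terms to get $r^k\to0$, $x^{k+1}-x^k\to0$, $z^{k+1}-z^k\to0$, verification that a cluster point satisfies \eqref{eq:kkt}, and then Lemma~\ref{lem:fejer}. The only differences are cosmetic: you telescope \eqref{eq:lalm-ineq2} directly and absorb the summable remainder $\tfrac{\eta^{k+1}-\eta^k}{2}\|x^{k+1}-x^*\|^2$ rather than first dividing by $\eta^k$ as in \eqref{eq:lalm-ineq3}, and you obtain \eqref{eq:kkt-1st} by graph-closedness of $\partial h$ applied to \eqref{eq:opt-cond-k} instead of taking a limit infimum in the proximal minimization inequality; both substitutions are valid.
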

\begin{proof}
Letting $w=w^*$ in \eqref{eq:lalm-ineq2} and dividing by $\eta^k$, we have from \eqref{eq:opt} that
\begin{align}\label{eq:lalm-ineq3}
&\frac{1}{2}\|x^{k+1}-x^*\|^2 + \frac{1}{2\rho_y\eta^k}\|y^{k+1}-y^*\|^2+ \frac{1}{2\rho_z\eta^k}\|z^{k+1}-z^*\|^2\nonumber\\
&+\frac{\beta-\rho_y}{2\eta^k}\|r^{k+1}\|^2+\frac{1}{2\rho_z^2\eta^k}(\beta-\rho_z)\|z^{k+1}-z^k\|^2+ \frac{1}{2\eta^k}\|x^{k+1}-x^k\|^2_{(\eta^k-L_g-L_\Psi^k)I-\beta A^\top A}\nonumber\\
\le &\frac{1}{2}\|x^k-x^*\|^2 + \frac{1}{2\rho_y\eta^k}\|y^k-y^*\|^2+ \frac{1}{2\rho_z\eta^k}\|z^k-z^*\|^2.
\end{align}
Summing up \eqref{eq:lalm-ineq3} over $k$ and noting $\eta^{k+1}\ge \eta^k$, we have from the condition $\delta>0$, $\rho_y,\rho_z\in (0,\beta)$ and Lemma \ref{lem:eta-up-bd} that
\begin{equation}\label{eq:diff-lim}
\lim_{k\to\infty}x^{k+1}-x^k = 0, \quad \lim_{k\to\infty} y^{k+1}-y^k = \lim_{k\to\infty}\rho_y r^{k+1}=0,\quad \lim_{k\to\infty} z^{k+1}-z^k=0.
\end{equation}
In addition, it follows from \eqref{eq:lalm-ineq3} that $\{w^k\}$ is bounded and must have a cluster point $\bar{w}$. Hence, there is a subsequence $\{w^k\}_{k\in \cK}$ convergent to $\bar{w}$. Since $\{\eta^k\}$ is increasing and bounded, it must converge to a number $\eta^\infty$. 

Below we show that $\bar{w}$ is a KKT point. First, we have  $A\bar{x}-b=0$ from \eqref{eq:diff-lim}, i.e, $\bar{x}$ satisfies \eqref{eq:kkt-res}. 

Secondly, from the update of $z$ and $\rho_z < \beta$, it follows $z_j^k\ge0,\,\forall j\in [m], \forall k$, and thus $\bar{z}_j\ge0,\forall j\in [m]$. If $f_j(x^{k+1}) >0$, then $f_j(x^{k+1})=\frac{1}{\rho_z}(z_j^{k+1}-z_j^k) \to 0$ that indicates $[f_j(x^{k+1})]_+\to 0$. Hence, $f_j(\bar{x}) \le 0,\,\forall j\in [m]$ follows from the continuity of $f_j$'s. For any $j\in [m]$, if $\bar{z}_j > 0$, then $z_j^k > \frac{\bar{z}_j}{2}$, as $k\in \cK$ is sufficiently large. It follows from $\max\big(-\frac{z_j^k}{\beta}, f_j(x^{k+1})\big) \to 0$ that $f_j(x^{k+1})\to 0$ as $\cK\ni k\to \infty$. Hence, $f_j(\bar{x})=0$. Therefore, $(\bar{x},\bar{z})$ satisfies \eqref{eq:kkt-cp}.

Thirdly, from the optimality of $x^{k+1}$, it holds that
$$\big\langle \nabla_x F(w^k), x^{k+1}\big\rangle + h(x^{k+1}) + \frac{\eta^k}{2}\|x^{k+1}-x^k\|^2 \le \big\langle \nabla_x F(w^k), x\big\rangle + h(x) + \frac{\eta^k}{2}\|x-x^k\|^2,\,\forall x.$$ 
Taking limit infimum over $k\in \cK$ on both sides of the above equation, we have from the lower semicontinuity of $h$ and continuity of $g$ and $f_j$'s that
$$\big\langle \nabla_x F(\bar{w}), \bar{x}\big\rangle + h(\bar{x})  \le \big\langle \nabla_x F(\bar{w}), x\big\rangle + h(x) + \frac{\eta^\infty}{2}\|x-\bar{x}\|^2,\,\forall x,$$
namely,
$$\bar{x}=\argmin_x \big\langle \nabla_x F(\bar{w}), x\big\rangle + h(x) + \frac{\eta^\infty}{2}\|x-\bar{x}\|^2.$$
Therefore, $\bar{w}$ satisfies \eqref{eq:kkt-1st} from the optimality condition of the above minimization problem, and thus $\bar{w}$ is a KKT point of \eqref{eq:ccp}.

Hence, \eqref{eq:tele-ineq2} holds with $w^*$ replaced by $\bar{w}$, and thus $w^k$ converges to $\bar{w}$ from Lemma \ref{lem:fejer}.
\end{proof}

\begin{theorem}[Sublinear convergence rate of LALM]\label{thm:rate-lalm}
Under Assumptions \ref{assump-kkt} and \ref{assump-lip}, let $\{w^k\}$ be the sequence generated from Algorithm \ref{alg:lalm} with any $x^0$, $y^0=0$ and $z^0=0$. If $\rho_y,\rho_z\in (0,\beta]$, then 
\begin{subequations}\label{eq:rate-lalm}
\begin{align}
|f_0(\bar{x}^{k+1})-f_0(x^*)| \le  \frac{\eta^\infty}{\eta^0(k+1)}\left(\frac{\eta^0}{2}\|x^0-x^*\|^2 + \frac{2\|y^*\|^2}{\rho_y} + \sum_{j=1}^m\frac{2(z_j^*)^2}{\rho_z}\right),\\
\|A\bar{x}^{k+1}-b\| + \sum_{j=1}^m [f_j(\bar{x}^{k+1})]_+ \le \frac{\eta^\infty}{\eta^0(k+1)}\left(\frac{\eta^0}{2}\|x^0-x^*\|^2 + \frac{(1+\|y^*\|)^2}{2\rho_y} + \sum_{j=1}^m\frac{(1+z_j^*)^2}{2\rho_z}\right),
\end{align}
\end{subequations}
where $\eta^\infty\le \bar{\eta}$ is the limit of $\{\eta^k\}$ and $\bar{x}^{k+1}=\sum_{t=0}^k \frac{x^{t+1}}{\sum_{t=0}^k\frac{1}{\eta^t}}$.
\end{theorem}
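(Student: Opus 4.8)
The plan is to turn the one-iteration progress bound of Theorem \ref{thm:1iter} into the ``ergodic'' form demanded by Lemma \ref{lem:pre-rate}, and then simply invoke that lemma. First I would specialize \eqref{eq:lalm-ineq2} by taking its feasible point to be $x^*$ (which satisfies $Ax^*=b$, $f_j(x^*)\le0$), while leaving $y$ arbitrary and $z\ge0$ arbitrary, so that the leading term on the left becomes $\Phi(x^{k+1};x^*,y,z)$. Because $\rho_y,\rho_z\in(0,\beta]$, the coefficients $\beta-\rho_y$ and $\beta-\rho_z$ are nonnegative, and because $\eta^k\ge L_F^k=L_g+\beta\|A\|^2+L_\Psi^k$ we have $(\eta^k-L_g-L_\Psi^k)I-\beta A^\top A\succeq \beta\|A\|^2 I-\beta A^\top A\succeq 0$; hence every one of the three ``progress'' quadratics on the left is nonnegative and may be discarded, and the negative term $-\tfrac{\beta}{2}\|r^k\|^2$ on the right may be dropped as well. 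This leaves
\begin{align*}
&\Phi(x^{k+1};x^*,y,z)+\frac{\eta^k}{2}\|x^{k+1}-x^*\|^2+\frac{1}{2\rho_y}\|y^{k+1}-y\|^2+\frac{1}{2\rho_z}\|z^{k+1}-z\|^2\\
&\le \frac{\eta^k}{2}\|x^k-x^*\|^2+\frac{1}{2\rho_y}\|y^k-y\|^2+\frac{1}{2\rho_z}\|z^k-z\|^2.
\end{align*}

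Next I would divide through by $\eta^k$ and exploit the monotonicity $\eta^{k+1}\ge\eta^k$ to replace the factor $\tfrac{1}{\eta^k}$ in front of the two multiplier terms on the left by $\tfrac{1}{\eta^{k+1}}$ (this only decreases them, preserving the inequality). Writing $T^t=\tfrac12\|x^t-x^*\|^2+\tfrac{1}{2\rho_y\eta^t}\|y^t-y\|^2+\tfrac{1}{2\rho_z\eta^t}\|z^t-z\|^2$, the inequality telescopes into $\tfrac{1}{\eta^t}\Phi(x^{t+1};x^*,y,z)+T^{t+1}\le T^t$. Summing over $t=0,\ldots,k$ and dropping the nonnegative remainder $T^{k+1}$, and then using $y^0=z^0=0$, yields
\begin{equation*}
\sum_{t=0}^k \frac{1}{\eta^t}\,\Phi(x^{t+1};x^*,y,z)\le T^0=\frac12\|x^0-x^*\|^2+\frac{\|y\|^2}{2\rho_y\eta^0}+\frac{\|z\|^2}{2\rho_z\eta^0}.
\end{equation*}

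The structural point that makes everything fit is that $\Phi(\cdot\,;x^*,y,z)$ is convex in its first argument whenever $z\ge0$, since it equals $f_0$ plus the linear map $y^\top A(\cdot)$ plus the nonnegative combination $\sum_j z_j f_j(\cdot)$ up to an additive constant. Applying Jensen's inequality with the weights $\tfrac{1}{\eta^t}$ gives $\big(\sum_{t=0}^k\tfrac{1}{\eta^t}\big)\Phi(\bar{x}^{k+1};x^*,y,z)\le\sum_{t=0}^k\tfrac{1}{\eta^t}\Phi(x^{t+1};x^*,y,z)$ for exactly the weighted average $\bar{x}^{k+1}$ named in the statement; combining this with $\sum_{t=0}^k\tfrac{1}{\eta^t}\ge\tfrac{k+1}{\eta^\infty}$ (valid since $\eta^t\le\eta^\infty$) produces
\begin{equation*}
\Phi(\bar{x}^{k+1};x^*,y,z)\le \frac{\eta^\infty}{\eta^0(k+1)}\left(\frac{\eta^0}{2}\|x^0-x^*\|^2+\frac{\|y\|^2}{2\rho_y}+\frac{\|z\|^2}{2\rho_z}\right).
\end{equation*}
This is precisely \eqref{eq:pre-rate-cond} with $\alpha=\tfrac{\eta^\infty}{2(k+1)}\|x^0-x^*\|^2$, $c_1=\tfrac{\eta^\infty}{2\rho_y\eta^0(k+1)}$, $c_2=\tfrac{\eta^\infty}{2\rho_z\eta^0(k+1)}$ (the expectation in Lemma \ref{lem:pre-rate} being vacuous, as LALM is deterministic), so substituting into \eqref{eq:pre-rate-obj} and \eqref{eq:pre-rate-res} and collecting factors delivers the two claimed bounds. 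I do not anticipate a genuine obstacle: the proof is bookkeeping on top of Theorem \ref{thm:1iter} and Lemma \ref{lem:pre-rate}. The only steps needing care are the telescoping, where the varying weight $\eta^k$ forces the $\eta^{k+1}\ge\eta^k$ substitution to line up consecutive terms, and the observation that the weights $\tfrac{1}{\eta^t}$ hidden in the definition of $\bar{x}^{k+1}$ are exactly those appearing in the summed inequality, so that the Jensen step is tight with the available estimate.
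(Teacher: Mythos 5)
Your proposal is correct and follows essentially the same route as the paper: specialize the one-iteration bound of Theorem \ref{thm:1iter} at $x=x^*$, drop the nonnegative progress terms, divide by $\eta^k$, telescope using $\eta^{k+1}\ge\eta^k$, apply Jensen's inequality to the $\tfrac{1}{\eta^t}$-weighted average $\bar{x}^{k+1}$, and invoke Lemma \ref{lem:pre-rate} with the same constants $\alpha$, $c_1$, $c_2$. The only difference is that you spell out the sign checks and the telescoping bookkeeping that the paper leaves implicit.
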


\begin{proof}
Letting $x=x^*$ in \eqref{eq:lalm-ineq2}, dividing by $\eta^k$, and summing it up give
\begin{align*}
\left(\sum_{t=0}^k\frac{1}{\eta^t}\right)\Phi(\bar{x}^{k+1};x^*,y,z)
\le \sum_{t=0}^k \frac{1}{\eta^t}\Phi(x^{t+1};x^*,y,z)
\le \frac{1}{\eta^0} \left[\frac{\eta^0}{2}\|x^0-x^*\|^2 + \frac{1}{2\rho_y}\|y\|^2+ \frac{1}{2\rho_z}\|z\|^2\right].
\end{align*}
Since $\sum_{t=0}^k\frac{1}{\eta^t}\ge \frac{k+1}{\eta^\infty}$, the desired results are obtained from Lemma \ref{lem:pre-rate} with $\alpha= \frac{\eta^\infty}{2(k+1)}\|x^0-x^*\|^2$, $c_1=\frac{\eta^\infty}{2\rho_y\eta^0(k+1)}$, and $c_2=\frac{\eta^\infty}{2\rho_z\eta^0(k+1)}$.
\end{proof}

Theorem \ref{thm:rate-lalm} implies that to reach an $\vareps$-optimal solution of \eqref{eq:ccp}, it is sufficient to evaluate the gradients of $g$ and $f_j, \, j\in[m]$ and proximal mapping of $h$ for $K$ times, where
$$K=\left\lfloor\frac{\eta^\infty}{\vareps\eta^0}\left(\frac{\eta^0}{2}\|x^0-x^*\|^2+\frac{\big[\max(1+\|y^*\|, 2\|y^*\|)\big]^2}{2\rho_y}+\sum_{j=1}^m\frac{\big[\max(1+z_j^*, 2z_j^*)\big]^2}{2\rho_z}\right)\right\rfloor.$$ 

\subsection{Local linear convergence of LALM for constrained smooth problems}\label{sec:local}
In this subsection, we assume that $h(x) = \iota_\cX(x)$ for a closed convex set $\cX$ and $g, f_1,\ldots,f_m$ are twice continuously differentiable. We show local linear convergence of Algorithm \ref{alg:lalm} under the following assumption.
\begin{assumption}\label{assump-scp}
There is a KKT point $w^*$ and a subset $J\subset [m]$ such that $x^*\in \mathrm{int}(\cX)$, and
\begin{enumerate}
\item $f_j(x^*) = 0, z_j^* >0, \,\forall j\in J$ and $f_j(x^*) <0, z_j^* =0,\,\forall j\not\in J$;
\item $x^\top \left(\nabla^2 g(x^*) + \sum_{j\in J} z_j^* \nabla^2 f_j(x^*)\right)x >0$ for any nonzero vector $x\in \Null(D^\top)$,
where $$D=\big[A^\top, \nabla f_1(x^*),\ldots, \nabla f_m(x^*)\big]$$
is column full-rank.
\end{enumerate}
\end{assumption}
When item 1 holds in the above assumption, we have
$$\nabla_x^2 \cL_\beta(w^*)=\nabla^2 g(x^*)+\beta A^\top A+\sum_{j\in J}\left(z_j^* \nabla^2 f_j(x^*)+\beta \nabla f_j(x^*)[\nabla f_j(x^*)]^\top\right),$$
and thus if in addition item 2 holds, then
$\nabla_x^2 \cL_\beta(w^*)$ is positive definite. We denote $\mu>0$ as its smallest eigenvalue. 

From the continuity of $\nabla _x^2 \cL_\beta$, we have the following result.
\begin{proposition}\label{prop:closeness}
There is $\gamma>0$ such that if $\max(\|x-x^*\|, \|y-y^*\|, \|z-z^*\|)\le \gamma$, then 
\begin{equation}\label{eq:close-result}
x\in \mathrm{int}(\cX);\, \nabla_x^2 \cL_\beta(w) \succeq \frac{\mu}{2}I; \, \beta f_j(x)+z_j >0, \forall j\in J;\, \beta f_j(x)+z_j < 0, \forall j\not\in J. 
\end{equation}
Hence, for any $x\in\cB_\gamma(x^*)$, 
\begin{equation}\label{eq:scv-ineq}
f_0(x)-f_0(x^*)+\langle y^*, Ax-b\rangle + \frac{\beta}{2}\|Ax-b\|^2+\Psi(x,z^*)-\Psi(x^*,z^*) \ge \frac{\mu}{4}\|x-x^*\|^2.
\end{equation}
\end{proposition}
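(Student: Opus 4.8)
The plan is to establish the three pointwise claims in \eqref{eq:close-result} by continuity/compactness arguments for a single radius $\gamma$, and then to derive the quadratic growth inequality \eqref{eq:scv-ineq} from the strong-convexity estimate $\nabla_x^2\cL_\beta(w)\succeq\frac{\mu}{2}I$ together with a second-order Taylor expansion. First I would treat each of the four conditions separately and obtain its own radius, then take $\gamma$ to be the minimum. Since $x^*\in\mathrm{int}(\cX)$ and $\cX$ is closed, there is a ball $B_{\gamma_0}(x^*)\subset\mathrm{int}(\cX)$, giving the first claim. For the sign conditions, note that by Assumption \ref{assump-scp}(1) we have $\beta f_j(x^*)+z_j^*>0$ for $j\in J$ (since $f_j(x^*)=0$ and $z_j^*>0$) and $\beta f_j(x^*)+z_j^*=\beta f_j(x^*)<0$ for $j\notin J$; by continuity of $f_j$ in $x$ and of the linear map in $z$, each strict inequality persists on some ball, and finiteness of $[m]$ lets me take a common radius. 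For the Hessian claim I would use that $w\mapsto\nabla_x^2\cL_\beta(w)$ is continuous near $w^*$ (here I need to observe that on the relevant neighborhood the sign conditions force $\beta f_j(x)+z_j$ to stay on one side of the kink of $\psi_\beta$, so $\Psi$ is genuinely $C^2$ there and its Hessian varies continuously); since $\nabla_x^2\cL_\beta(w^*)\succeq\mu I$ and eigenvalues are continuous, the smallest eigenvalue stays above $\frac{\mu}{2}$ on a small enough ball.

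For the growth inequality \eqref{eq:scv-ineq}, define $\phi(x)=f_0(x)-f_0(x^*)+\langle y^*,Ax-b\rangle+\frac{\beta}{2}\|Ax-b\|^2+\Psi(x,z^*)-\Psi(x^*,z^*)$. I would first check that $\nabla\phi(x^*)=0$: this is exactly the stationarity part of the KKT condition \eqref{eq:kkt-1st} (with $\partial h$ vanishing because $x^*\in\mathrm{int}(\cX)$), combined with $\frac{\partial}{\partial u}\psi_\beta(f_j(x^*),z_j^*)=[\beta f_j(x^*)+z_j^*]_+=z_j^*$, which makes $\nabla_x\Psi(x^*,z^*)=\sum_j z_j^*\nabla f_j(x^*)$ and the complementary-slackness-driven residual terms cancel. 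Then $\nabla^2\phi(x)=\nabla_x^2\cL_\beta(x,y^*,z^*)$ on the neighborhood, which is $\succeq\frac{\mu}{2}I$ by the first part. Applying the second-order Taylor expansion of $\phi$ about $x^*$ with remainder evaluated at an intermediate point $\xi\in B_\gamma(x^*)$ (using convexity of the ball so $\xi$ lies in it), and using $\phi(x^*)=0$, $\nabla\phi(x^*)=0$, and $\nabla^2\phi(\xi)\succeq\frac{\mu}{2}I$, yields $\phi(x)\ge\frac{\mu}{4}\|x-x^*\|^2$, which is \eqref{eq:scv-ineq}.

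The main obstacle I anticipate is the $C^2$/continuity argument for $\nabla_x^2\cL_\beta$. The augmented Lagrangian is only piecewise smooth through $\Psi_\beta$ because $\psi_\beta$ has a discontinuous second derivative across the hyperplane $\beta u+v=0$ (its first derivative $[\beta u+v]_+$ is merely Lipschitz). So the clean statement ``$\nabla_x^2\cL_\beta(w)$ is continuous'' is only valid after one argues that on $B_\gamma(w^*)$ the arguments $\beta f_j(x)+z_j$ never cross zero—which is precisely what the sign conditions in \eqref{eq:close-result} guarantee. Hence the order matters: the sign conditions must be secured first, and only then can I legitimately differentiate $\Psi$ twice and invoke continuity of the Hessian. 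I would make this dependence explicit, noting that on $j\in J$ the relevant branch is $\psi_\beta(u,v)=uv+\frac{\beta}{2}u^2$ (contributing $z_j^*\nabla^2 f_j+\beta\nabla f_j(\nabla f_j)^\top$ at $x^*$, matching the displayed formula for $\nabla_x^2\cL_\beta(w^*)$) while for $j\notin J$ the branch $-\frac{v^2}{2\beta}$ contributes nothing to the $x$-Hessian, so the constraints outside $J$ drop out exactly as in the stated Hessian expression.
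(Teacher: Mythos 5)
Your proposal is correct and follows exactly the route the paper intends: the paper states this proposition without proof, justifying it only by ``the continuity of $\nabla_x^2\cL_\beta$,'' and your argument (secure the sign conditions first so that $\Psi_\beta$ is genuinely $C^2$ near $w^*$, then use eigenvalue continuity for the Hessian bound, then a second-order Taylor expansion of $\cL_\beta(\cdot,y^*,z^*)$ about its stationary point $x^*$ to get the quadratic growth) is precisely the fleshed-out version of that one-line justification. Your observation that the order of the claims matters---because $\psi_\beta$ has a discontinuous second derivative across $\beta u+v=0$---is a detail the paper glosses over but which your writeup handles correctly.
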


The next lemma can be easily verified from the definition of $\Psi$. We omit its proof.
\begin{lemma}\label{lem:psi-sum}
If $x^{k+1}\in\cB_\gamma(x^*)$ and $z^k\in\cB_\gamma(z^*)$, then
\begin{equation}\label{eq:psi-sum}
\sum_{j\in J}(z_j^k-z_j^*) f_j(x^{k+1})=\Psi(x^{k+1},z^k)-\Psi(x^{k+1},z^*)-\Psi(x^*,z^k)+\Psi(x^*,z^*).
\end{equation}
\end{lemma}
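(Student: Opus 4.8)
The plan is to reduce \eqref{eq:psi-sum} to an index-by-index computation. Writing $\Psi(x,z)=\sum_{j=1}^m\psi_\beta(f_j(x),z_j)$, the right-hand side is a sum over $j\in[m]$ of the four-term combination
$\psi_\beta(f_j(x^{k+1}),z_j^k)-\psi_\beta(f_j(x^{k+1}),z_j^*)-\psi_\beta(f_j(x^*),z_j^k)+\psi_\beta(f_j(x^*),z_j^*)$,
so it suffices to evaluate this combination for each fixed $j$, splitting into the cases $j\in J$ and $j\notin J$. The crucial preliminary step is to pin down which branch of the piecewise definition of $\psi_\beta$ each of the four evaluations uses. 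The four argument pairs $(x^{k+1},z^k)$, $(x^{k+1},z^*)$, $(x^*,z^k)$, $(x^*,z^*)$ each have their $x$-component within $\gamma$ of $x^*$ and their $z$-component within $\gamma$ of $z^*$ (the optimal components lying trivially within $\gamma$ of themselves). Since the sign conditions in \eqref{eq:close-result} depend only on $x$ and $z$, Proposition \ref{prop:closeness} applies to all four pairs and yields $\beta f_j+z_j>0$ for every $j\in J$ and $\beta f_j+z_j<0$ for every $j\notin J$. Thus, for fixed $j\in J$ all four evaluations use the first branch $\psi_\beta(u,v)=uv+\frac{\beta}{2}u^2$, whereas for fixed $j\notin J$ all four use the second branch $\psi_\beta(u,v)=-\frac{v^2}{2\beta}$.

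Once the branches are fixed the algebra is immediate. For $j\notin J$ the value $-\frac{v^2}{2\beta}$ is independent of the $x$-argument, so $\psi_\beta(f_j(x^{k+1}),z_j^k)=\psi_\beta(f_j(x^*),z_j^k)$ and $\psi_\beta(f_j(x^{k+1}),z_j^*)=\psi_\beta(f_j(x^*),z_j^*)$; the four-term combination therefore vanishes, consistent with the absence of these indices from the left-hand sum. For $j\in J$ the quadratic terms $\frac{\beta}{2}f_j(x^{k+1})^2$ and $\frac{\beta}{2}f_j(x^*)^2$ cancel within each $x$-pair, and the surviving bilinear terms collapse to $(z_j^k-z_j^*)\big(f_j(x^{k+1})-f_j(x^*)\big)$. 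Invoking $f_j(x^*)=0$ from item~1 of Assumption \ref{assump-scp} reduces this to $(z_j^k-z_j^*)f_j(x^{k+1})$, which is precisely the $j$-th summand on the left-hand side. Summing over $j\in[m]$ then gives \eqref{eq:psi-sum}.

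The only genuine obstacle is the branch-selection bookkeeping: one must be certain that pairing the off-optimal iterates $x^{k+1}$ and $z^k$ with the optimal $x^*$ and $z^*$ still keeps every quantity $\beta f_j+z_j$ strictly on the correct side of zero, so that no evaluation straddles the kink of $\psi_\beta$. This is exactly what the uniform neighborhood $\cB_\gamma(x^*)$ (together with the $\gamma$-ball around $z^*$) in Proposition \ref{prop:closeness} delivers, so no further estimate is required; after the branches are determined the identity is a short computation, which is why the statement can be verified directly.
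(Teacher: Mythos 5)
Your proof is correct and is exactly the direct verification the paper has in mind (the paper omits the proof, remarking only that the identity "can be easily verified from the definition of $\Psi$"). The key observations — that Proposition \ref{prop:closeness} fixes the branch of $\psi_\beta$ uniformly over all four argument pairs, that the $j\notin J$ terms vanish because the second branch is independent of the $x$-argument, and that $f_j(x^*)=0$ for $j\in J$ collapses the bilinear remainder — are all present and correctly handled.
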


From the update rule of $z$, we have following result.
\begin{lemma}
If $x^{k+1}\in\cB_\gamma(x^*)$ and $z^k\in\cB_\gamma(z^*)$, then
\begin{equation}\label{eq:z-zs}
\sum_{j=1}^m(z_j^{k+1}-z_j^*)\cdot\max\big(-\frac{z_j^k}{\beta}, f_j(x^{k+1})\big)\le \frac{1}{\rho_z}\|z^{k+1}-z^k\|^2+\sum_{j\in J}(z_j^k-z_j^*) f_j(x^{k+1}).
\end{equation}
\end{lemma}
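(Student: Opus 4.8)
The plan is to exploit the explicit $z$-update directly. Writing $d_j \coloneqq \max\!\big(-\tfrac{z_j^k}{\beta}, f_j(x^{k+1})\big)$, the update \eqref{eq:lalm-z} says precisely $z_j^{k+1}-z_j^k = \rho_z d_j$, so $d_j = \tfrac{1}{\rho_z}(z_j^{k+1}-z_j^k)$. I would then split the factor $z_j^{k+1}-z_j^*$ on the left-hand side as $(z_j^{k+1}-z_j^k)+(z_j^k-z_j^*)$, which turns the sum into two pieces:
\begin{equation*}
\sum_{j=1}^m (z_j^{k+1}-z_j^*)d_j = \sum_{j=1}^m (z_j^{k+1}-z_j^k)d_j + \sum_{j=1}^m (z_j^k-z_j^*)d_j.
\end{equation*}
The first piece is immediate: since $z_j^{k+1}-z_j^k = \rho_z d_j$, each summand equals $\tfrac{1}{\rho_z}(z_j^{k+1}-z_j^k)^2$, so the whole sum is exactly $\tfrac{1}{\rho_z}\|z^{k+1}-z^k\|^2$, matching the first term on the right-hand side of \eqref{eq:z-zs}.

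The second step is to bound $\sum_{j=1}^m (z_j^k-z_j^*)d_j$ by the $J$-indexed term, and this is where Proposition \ref{prop:closeness} does the work. Because $x^{k+1}\in\cB_\gamma(x^*)$ and $z^k\in\cB_\gamma(z^*)$, the last two inequalities in \eqref{eq:close-result} tell me which branch of the max is active: for $j\in J$ we have $\beta f_j(x^{k+1})+z_j^k>0$, i.e. $f_j(x^{k+1})>-\tfrac{z_j^k}{\beta}$, so $d_j=f_j(x^{k+1})$; for $j\notin J$ we have $\beta f_j(x^{k+1})+z_j^k<0$, so $d_j=-\tfrac{z_j^k}{\beta}$. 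Substituting these, the $j\in J$ terms reproduce $\sum_{j\in J}(z_j^k-z_j^*)f_j(x^{k+1})$ exactly, and it remains to show the $j\notin J$ terms are nonpositive. Here I would invoke item 1 of Assumption \ref{assump-scp}, which gives $z_j^*=0$ for $j\notin J$; hence each such summand is $(z_j^k-0)\big(-\tfrac{z_j^k}{\beta}\big)=-\tfrac{(z_j^k)^2}{\beta}\le 0$. Dropping these nonpositive terms yields the claimed inequality.

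This argument is essentially bookkeeping once the branch identification is in hand, so I do not anticipate a serious obstacle. The one point requiring care is making sure the closeness hypotheses are strong enough to force the strict sign conditions $\beta f_j(x^{k+1})+z_j^k\gtrless 0$ for \emph{every} $j$ simultaneously — but this is exactly guaranteed by \eqref{eq:close-result}, which holds uniformly on $\cB_\gamma(x^*)\times\cB_\gamma(z^*)$, so both $x^{k+1}$ and $z^k$ being in those balls suffices. The only other subtlety is remembering that the complementarity structure ($z_j^*=0$ off $J$) is what makes the leftover terms have a definite sign; without it the off-$J$ contribution would not obviously be discardable.
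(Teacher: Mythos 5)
Your proof is correct and follows essentially the same route as the paper's: both rest on the branch identification $\beta f_j(x^{k+1})+z_j^k>0$ for $j\in J$ and $<0$ for $j\notin J$ from Proposition \ref{prop:closeness}, the explicit form of the $z$-update, and the fact that $z_j^*=0$ for $j\notin J$. The only difference is bookkeeping — you split $z_j^{k+1}-z_j^*$ into $(z_j^{k+1}-z_j^k)+(z_j^k-z_j^*)$ first and then identify branches, which if anything makes the appearance of the exact term $\tfrac{1}{\rho_z}\|z^{k+1}-z^k\|^2$ slightly more transparent than the paper's computation.
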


\begin{proof}
When $x^{k+1}\in\cB_\gamma(x^*)$ and $z^k\in\cB_\gamma(z^*)$, it follows from \eqref{eq:close-result} that
$$\beta f_j(x^{k+1})+z_j^k >0, \forall j\in J;\, \beta f_j(x^{k+1})+z_j^k < 0, \forall j\not\in J. $$
Hence,
\begin{align*}
&\sum_{j=1}^m(z_j^{k+1}-z_j^*)\cdot\max\big(-\frac{z_j^k}{\beta}, f_j(x^{k+1}\big)\\
=&\sum_{j\in J}(z_j^k+\rho_z f_j(x^{k+1})-z_j^*)f_j(x^{k+1})+\sum_{j\not\in J}\big((1-\frac{\rho_z}{\beta})z_j^k\big)\big(-\frac{z_j^k}{\beta}\big)\\
\le &\sum_{j\in J}(z_j^k+\rho_z f_j(x^{k+1})-z_j^*)f_j(x^{k+1})+\rho_z\sum_{j\not\in J}\big(\frac{z_j^k}{\beta}\big)^2\\
=&\frac{1}{\rho_z}\|z^{k+1}-z^k\|^2+\sum_{j\in J}(z_j^k-z_j^*) f_j(x^{k+1}),
\end{align*}
which completes the proof.
\end{proof}

Let 
$\underline{\eta}=L_g+\beta\|A\|^2+\sum_{j=1}^m\beta B_j^2.
$
Then we have the next theorem.
\begin{theorem}\label{thm:pre-lin}
Let $\{w^k\}$ be the sequence generated from Algorithm \ref{alg:lalm} with $w^0$ satisfying the following condition:
\begin{equation}\label{eq:init-cond}
\|x^0-x^*\|^2+\frac{1}{\rho_y \underline{\eta}}\|y^0-y^*\|^2+\frac{1}{\rho_z \underline{\eta}}\|z^0-z^*\|^2\le \gamma^2\cdot\min\big(1,\frac{1}{\rho_y\bar{\eta}},\frac{1}{\rho_z\bar{\eta}}\big),
\end{equation}
where $\gamma$ is given in Proposition \ref{prop:closeness}.
For any $\theta\in(0,1)$, if $0<\rho_y\le \beta$ and $0<\rho_z\le \beta(1-\theta)$, 
then for any $k$, it holds that
\begin{align}\label{eq:loc-ineq1}
&\frac{\theta\mu}{4}\|x^{k+1}-x^*\|^2+\frac{\eta^k}{2}\|x^{k+1}-x^*\|^2-\frac{\beta}{2}\|r^{k+1}\|^2+\frac{1}{2\rho_y}\|y^{k+1}-y^*\|^2+\frac{1}{2\rho_z}\|z^{k+1}-z^*\|^2\cr
&+ \frac{1}{2}\|x^{k+1}-x^k\|_{(\eta^k-L_g-L_\Psi^k)I-\beta A^\top A}^2+\left(\beta\big(1-\frac{\theta}{2}\big)-\frac{\rho_y}{2}\right)\|r^{k+1}\|^2\cr
\le &\frac{\eta^k}{2}\|x^k-x^*\|^2-\frac{\beta}{2}\|r^k\|^2+\frac{1}{2\rho_y}\|y^k-y^*\|^2+\frac{1}{2\rho_z}\|z^k-z^*\|^2.
\end{align}
\end{theorem}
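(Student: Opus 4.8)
The plan is to prove \eqref{eq:loc-ineq1} as a one-step descent inequality, run simultaneously with an induction that keeps the whole trajectory inside the neighborhood where Proposition \ref{prop:closeness}, Lemma \ref{lem:psi-sum} and the $z$-update lemma are valid. First I would note that, because $\underline{\eta}\le\eta^k$ and $\|r^k\|^2=\|A(x^k-x^*)\|^2\le\|A\|^2\|x^k-x^*\|^2$ with $\underline{\eta}\ge\beta\|A\|^2$, the quantity $V^k:=\frac{\eta^k}{2}\|x^k-x^*\|^2-\frac{\beta}{2}\|r^k\|^2+\frac{1}{2\rho_y}\|y^k-y^*\|^2+\frac{1}{2\rho_z}\|z^k-z^*\|^2$ is nonnegative and controls $\|x^k-x^*\|^2,\|y^k-y^*\|^2,\|z^k-z^*\|^2$ with exactly the constants appearing in \eqref{eq:init-cond}. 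Assuming inductively that $x^{k+1}\in\cB_\gamma(x^*)$ and $z^k\in\cB_\gamma(z^*)$ — the base case and the propagation both being secured by \eqref{eq:init-cond} once the descent inequality is available — I would derive \eqref{eq:loc-ineq1}; dropping the nonnegative "progress" terms on its left then gives $V^{k+1}\le V^k$, which forces the next iterate back into the ball and closes the induction.

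For the one-step estimate I would start from the intermediate inequality \eqref{eq:lalm-ineq-a} in the proof of Theorem \ref{thm:1iter}, specialized to $x=x^*$ and $y=y^*$ (legitimate since $x^*$ is feasible and $x^*\in\mathrm{int}(\cX)$ by Proposition \ref{prop:closeness}). Using \eqref{eq:psi-sum} and $\Psi(x^*,z^*)=0$ I would rewrite $\Psi(x^{k+1},z^k)-\Psi(x^*,z^k)=\Psi(x^{k+1},z^*)+\sum_{j\in J}(z_j^k-z_j^*)f_j(x^{k+1})$. Writing $S:=f_0(x^{k+1})-f_0(x^*)+\langle y^*,r^{k+1}\rangle+\Psi(x^{k+1},z^*)$, the problem then reduces to lower-bounding $S$ and to turning the cross term $\sum_{j\in J}(z_j^k-z_j^*)f_j(x^{k+1})$ into telescoping $z$-quantities.

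The crux, which I expect to be the main obstacle, is to bound $S$ so that it spends only a $\theta$-fraction of the residual penalty $\frac{\beta}{2}\|r^{k+1}\|^2$; a naive use of \eqref{eq:scv-ineq} alone loses the full penalty and cannot reproduce the $\|r^{k+1}\|^2$-coefficient in \eqref{eq:loc-ineq1}. I would use two lower bounds at once: strong convexity \eqref{eq:scv-ineq} gives $S\ge\frac{\mu}{4}\|x^{k+1}-x^*\|^2-\frac{\beta}{2}\|r^{k+1}\|^2$, while in $\cB_\gamma(x^*)$ the form \eqref{eq:close-result} reduces $\Psi(\cdot,z^*)$ to its $j\in J$ quadratic part, so that $S=\Phi(x^{k+1};w^*)+\frac{\beta}{2}\sum_{j\in J}f_j(x^{k+1})^2\ge\frac{\beta}{2}\sum_{j\in J}f_j(x^{k+1})^2$ by \eqref{eq:opt}. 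Taking the convex combination with weights $\theta$ and $1-\theta$ yields
\[
S\ \ge\ \tfrac{\theta\mu}{4}\|x^{k+1}-x^*\|^2-\tfrac{\theta\beta}{2}\|r^{k+1}\|^2+\tfrac{(1-\theta)\beta}{2}\sum_{j\in J}f_j(x^{k+1})^2 ,
\]
so the strong-convexity gain and the penalty are both scaled by $\theta$, and a reserve $\tfrac{(1-\theta)\beta}{2}\sum_{j\in J}f_j^2$ is kept.

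Finally I would process the cross term exactly as in the computation behind \eqref{eq:z-zs}, but retaining the $j\notin J$ contribution as a genuine nonnegative term instead of bounding it; combined with \eqref{eq:lalm-zterm} and the relation $z_j^{k+1}-z_j^k=\rho_z f_j(x^{k+1})$ for $j\in J$ this gives
\[
\sum_{j\in J}(z_j^k-z_j^*)f_j(x^{k+1})=\tfrac{1}{2\rho_z}\big(\|z^{k+1}-z^*\|^2-\|z^k-z^*\|^2\big)-\tfrac{\rho_z}{2}\sum_{j\in J}f_j(x^{k+1})^2+\tfrac{2\beta-\rho_z}{2\rho_z^2}\sum_{j\notin J}(z_j^{k+1}-z_j^k)^2 .
\]
Adding the multiplier identity \eqref{eq:lalm-yterm}, folding $-\frac{\beta}{2}\|r^{k+1}-r^k\|^2=-\frac{\beta}{2}\|x^{k+1}-x^k\|^2_{A^\top A}$ into the $x$-difference seminorm, and collecting coefficients, the $\|r^{k+1}\|^2$ terms assemble to $\frac{\beta(1-\theta)}{2}-\frac{\rho_y}{2}$ (which is how the separated $-\frac{\beta}{2}\|r^{k+1}\|^2$ and $(\beta(1-\tfrac{\theta}{2})-\tfrac{\rho_y}{2})\|r^{k+1}\|^2$ in \eqref{eq:loc-ineq1} arise), the $\sum_{j\in J}f_j^2$ terms combine to $\tfrac{(1-\theta)\beta-\rho_z}{2}\sum_{j\in J}f_j^2\ge0$ precisely because $\rho_z\le\beta(1-\theta)$, and the $j\notin J$ sum is nonnegative because $\rho_z\le\beta$. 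Dropping these two nonnegative quantities yields \eqref{eq:loc-ineq1}. Thus the conditions $\rho_y\le\beta$ and $\rho_z\le\beta(1-\theta)$ enter exactly to keep the leftover quadratics of the correct sign, and the remaining work is the bookkeeping that the inductively-maintained ball membership justifies every invocation of Proposition \ref{prop:closeness}, Lemma \ref{lem:psi-sum}, and the $z$-identity at each step.
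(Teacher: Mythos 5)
Your one-step estimate is essentially the paper's argument in a different packaging. The paper forms the convex combination by adding \eqref{eq:lalm-yterm}, \eqref{eq:lalm-zterm}, $\theta$ times \eqref{eq:psi-sum} and \eqref{eq:z-zs}, and $(1-\theta)$ times \eqref{eq:lalm-ineq-z} to \eqref{eq:lalm-ineq-a} with $(x,y)=(x^*,y^*)$, then invokes \eqref{eq:opt}, \eqref{eq:scv-ineq} and $\Psi(x^*,z^k)\le 0$; your $\theta$/$(1-\theta)$ split of $S$ between the strong-convexity bound and $\Phi(x^{k+1};w^*)\ge 0$, and your direct telescoping identity for $\sum_{j\in J}(z_j^k-z_j^*)f_j(x^{k+1})$ (which I checked: it is exact, with the $j\in J$ leftover $\frac{(1-\theta)\beta-\rho_z}{2}\sum_{j\in J}f_j^2$ matching the paper's $\frac{1}{2\rho_z}\bigl(\frac{\beta(1-\theta)}{\rho_z}-1\bigr)\|z^{k+1}-z^k\|^2$ on $J$ and a harmlessly larger coefficient on $J^c$), reproduce the same coefficients, including $\frac{\beta(1-\theta)}{2}-\frac{\rho_y}{2}$ for $\|r^{k+1}\|^2$.

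The genuine gap is in how you keep the iterates inside $\cB_\gamma$. As you set it up, proving \eqref{eq:loc-ineq1} at step $k$ requires $x^{k+1}\in\cB_\gamma(x^*)$ (Proposition \ref{prop:closeness}, Lemma \ref{lem:psi-sum}, and the $z$-identity are all invoked at $x^{k+1}$), yet the only control you have on $x^{k+1}$ is the conclusion $V^{k+1}\le V^k$ of that same inequality; the induction does not close. The paper avoids this entirely: ball membership of \emph{all} iterates, including $x^{k+1}$, follows a priori from the global inequality \eqref{eq:tele-ineq2} (a consequence of Theorem \ref{thm:1iter}, which needs no locality) together with \eqref{eq:init-cond} — and this route also delivers exactly the constants $\frac{1}{\rho_y\underline{\eta}},\frac{1}{\rho_z\underline{\eta}}$ and $\min\bigl(1,\frac{1}{\rho_y\bar{\eta}},\frac{1}{\rho_z\bar{\eta}}\bigr)$ appearing in \eqref{eq:init-cond}, which your $V^k$ (weakened by the $-\frac{\beta}{2}\|r^k\|^2$ term) would not quite recover. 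Replace your simultaneous induction by this unconditional containment step and the rest of your argument goes through.
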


\begin{proof}
We first note $\max(\|x^k-x^*\|, \|y^k-y^*\|,\|z^k-z^*\|)\le \gamma,\,\forall k\ge0$ from \eqref{eq:tele-ineq2}, \eqref{eq:init-cond}, and the following inequality
\begin{align*}
&~\min\big(1,\frac{1}{\rho_y\bar{\eta}},\frac{1}{\rho_z\bar{\eta}}\big)(\|x^k-x^*\|^2+ \|y^k-y^*\|^2+ \|z^k-z^*\|^2)\\
\le &~\|x^k-x^*\|^2+ \frac{1}{\rho_y\eta^k}\|y^k-y^*\|^2+ \frac{1}{\rho_z\eta^k}\|z^k-z^*\|^2\\
\le &~ \|x^0-x^*\|^2+\frac{1}{\rho_y \underline{\eta}}\|y^0-y^*\|^2+\frac{1}{\rho_z \underline{\eta}}\|z^0-z^*\|^2.
\end{align*}

Adding \eqref{eq:lalm-yterm} with $y=y^*$, \eqref{eq:lalm-zterm} with $z=z^*$, $\theta$ times of \eqref{eq:psi-sum} and \eqref{eq:z-zs}, and $1-\theta$ times of \eqref{eq:lalm-ineq-z} to \eqref{eq:lalm-ineq-a} with $(x,y)=(x^*,y^*)$, we have by rearranging terms that
\begin{align*}
&\theta\left[f_0(x^{k+1})-f_0(x^*)+\langle y^*, r^{k+1}\rangle + \frac{\beta}{2}\|r^{k+1}\|^2+\Psi(x^{k+1},z^*)-\Psi(x^*,z^*)\right]\cr
&+(1-\theta)\left[f_0(x^{k+1})-f_0(x^*)+\langle y^*, r^{k+1}\rangle+\sum_{j=1}^m z_j^* f_j(x^{k+1})-\Psi(x^*,z^k)\right]\cr
&+\frac{\eta^k}{2}\|x^{k+1}-x^*\|^2-\frac{\beta}{2}\|r^{k+1}\|^2+\frac{1}{2\rho_y}\|y^{k+1}-y^*\|^2+\frac{1}{2\rho_z}\|z^{k+1}-z^*\|^2\cr
& + \frac{1}{2}\|x^{k+1}-x^k\|_{(\eta^k-L_g-L_\Psi^k)I-\beta A^\top A}^2+\left(\beta\big(1-\frac{\theta}{2}\big)-\frac{\rho_y}{2}\right)\|r^{k+1}\|^2+\frac{1}{2\rho_z}\left(\frac{\beta(1-\theta)}{\rho_z}-1\right)\|z^{k+1}-z^k\|^2\cr
\le &\frac{\eta^k}{2}\|x^k-x\|^2-\frac{\beta}{2}\|r^k\|^2+\frac{1}{2\rho_y}\|y^k-y^*\|^2+\frac{1}{2\rho_z}\|z^k-z^*\|^2.
\end{align*}
From \eqref{eq:opt}, \eqref{eq:scv-ineq}, the above inequality, and $\Psi(x^*, z^k)\le 0$, the desired result follows.
\end{proof}

In addition, we can bound $\|y^k-y^*\|^2$ and $\|z^k-z^*\|^2$ by $x$-terms. 
\begin{lemma}\label{lem:bd-yz-x}
Let $\nu>0$ be the smallest eigenvalue of $D^\top D$. Under the assumption of Theorem \ref{thm:pre-lin}, we have
\begin{equation}\label{eq:bd-yz-x}
\begin{aligned}
&\nu \big(\|y^k-y^*\|^2+\|z^k-z^*\|^2\big)\\
\le & \left(4 L_g^2+8|J|\sum_{j\in J}(\beta^2 B_j^4+ |z_j^k|^2 L_j^2)\right)\|x^k-x^*\|^2 + 4\beta^2\|A\|^2\|r^k\|^2+4\bar{\eta}^2\|x^{k+1}-x^k\|^2.
\end{aligned}
\end{equation}
\end{lemma}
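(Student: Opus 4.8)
The plan is to read off an exact equation for the multiplier error $A^\top(y^k-y^*)$ from the first-order optimality of the $x$-subproblem, and then invoke the column-full-rankness of $D$ to turn it into a bound on $\|y^k-y^*\|^2+\|z^k-z^*\|^2$. Under the initialization \eqref{eq:init-cond}, the telescoping estimate \eqref{eq:tele-ineq2} keeps every iterate in $\cB_\gamma(x^*)$, so Proposition \ref{prop:closeness} guarantees $x^k,x^{k+1}\in\mathrm{int}(\cX)$. With $h=\iota_\cX$ the subdifferential in \eqref{eq:opt-cond-k} is the trivial normal cone $N_\cX(x^{k+1})=\{0\}$, so \eqref{eq:opt-cond-k} becomes the identity
$$0=\nabla g(x^k)+A^\top y^k+\beta A^\top r^k+\nabla_x\Psi(x^k,z^k)+\eta^k(x^{k+1}-x^k).$$
The KKT stationarity \eqref{eq:kkt-1st} at $w^*$ (again $\partial h(x^*)=\{0\}$ and $r^*=0$) reads $0=\nabla g(x^*)+A^\top y^*+\nabla_x\Psi(x^*,z^*)$, and subtracting expresses $A^\top(y^k-y^*)$ as the negative sum of $\nabla g(x^k)-\nabla g(x^*)$, $\beta A^\top r^k$, $\eta^k(x^{k+1}-x^k)$, and $\nabla_x\Psi(x^k,z^k)-\nabla_x\Psi(x^*,z^*)$.

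Next I would pass to $Du$ with $u=(y^k-y^*,z^k-z^*)$. Since $D^\top D\succeq\nu I$, we have $\nu(\|y^k-y^*\|^2+\|z^k-z^*\|^2)\le\|Du\|^2$, where
$$Du=A^\top(y^k-y^*)+\sum_{j=1}^m(z_j^k-z_j^*)\nabla f_j(x^*).$$
Substituting the formula for $A^\top(y^k-y^*)$ exhibits $Du$ as a sum of four groups: $-(\nabla g(x^k)-\nabla g(x^*))$, $-\beta A^\top r^k$, $-\eta^k(x^{k+1}-x^k)$, and the gradient mismatch $S:=-\big(\nabla_x\Psi(x^k,z^k)-\nabla_x\Psi(x^*,z^*)\big)+\sum_{j=1}^m(z_j^k-z_j^*)\nabla f_j(x^*)$. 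Applying $\|\sum_{i=1}^4 v_i\|^2\le 4\sum_{i=1}^4\|v_i\|^2$, the first three groups contribute exactly $4L_g^2\|x^k-x^*\|^2$, $4\beta^2\|A\|^2\|r^k\|^2$, and $4\bar{\eta}^2\|x^{k+1}-x^k\|^2$, using \eqref{eq:lip-g}, the definition of the operator norm, and $\eta^k\le\bar{\eta}$ from Lemma \ref{lem:eta-up-bd}.

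The heart of the argument is estimating $\|S\|$. Here I would use the strict-complementarity structure of Proposition \ref{prop:closeness}: in $\cB_\gamma$ one has $\beta f_j(x^k)+z_j^k>0$ for $j\in J$ and $<0$ for $j\notin J$, and since $\tfrac{\partial}{\partial u}\psi_\beta=[\beta u+v]_+$, the inactive constraints contribute nothing to $\nabla_x\Psi(x^k,z^k)$; together with $f_j(x^*)=z_j^*=0$ for the two index classes (Assumption \ref{assump-scp}), the mismatch $S$ reduces—modulo the inactive multipliers discussed below—to a sum over $J$ of terms $z_j^k\big(\nabla f_j(x^*)-\nabla f_j(x^k)\big)-\beta f_j(x^k)\nabla f_j(x^k)$. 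Each such term is bounded by $\big(|z_j^k|L_j+\beta B_j^2\big)\|x^k-x^*\|$, via $|f_j(x^k)|=|f_j(x^k)-f_j(x^*)|\le B_j\|x^k-x^*\|$ (from \eqref{eq:lip-fval}), $\|\nabla f_j(x^k)-\nabla f_j(x^*)\|\le L_j\|x^k-x^*\|$ (from \eqref{eq:lip-f}), and $\|\nabla f_j(x^k)\|\le B_j$ (from \eqref{eq:bd-fgrad}). Then Cauchy--Schwarz $\|\sum_{j\in J}a_j\|^2\le|J|\sum_{j\in J}\|a_j\|^2$ together with $(a+b)^2\le 2a^2+2b^2$ gives $\|S\|^2\le 2|J|\sum_{j\in J}(\beta^2 B_j^4+|z_j^k|^2 L_j^2)\|x^k-x^*\|^2$, which after the overall factor $4$ matches the first bracket of \eqref{eq:bd-yz-x}. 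Summing the four contributions yields the claim.

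I expect the delicate step to be precisely this reduction of $S$ to the active set $J$. The inactive multiplier errors $z_j^k-z_j^*=z_j^k$ for $j\notin J$ enter $Du$ through the last sum in $S$ but are absent from $\nabla_x\Psi$, so one must verify they leave no residual that cannot be controlled by $\|x^k-x^*\|$. This is exactly where the sign conditions of Proposition \ref{prop:closeness} are essential, reinforced by the fact that the $z$-update forces the contraction $z_j^{k+1}=(1-\rho_z/\beta)z_j^k$ on every inactive index in the local regime, so the inactive block is driven onto the active-set pattern. Once $S$ is confined to $J$, the remainder is routine norm bookkeeping.
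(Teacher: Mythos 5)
Your proposal follows the paper's proof essentially step for step: the paper subtracts the stationarity identity at $x^{k+1}$ (with the normal cone trivial since $x^{k+1}\in\mathrm{int}(\cX)$) from the KKT identity at $x^*$, uses $\nu\|u\|^2\le\|Du\|^2$ with $u=[y^k;z^k]-[y^*;z^*]$, splits $Du$ into the same four groups via $\|\sum_{i=1}^4 v_i\|^2\le 4\sum_{i=1}^4\|v_i\|^2$, and bounds the $\Psi$-mismatch over $J$ exactly as you propose, rewriting each term as $\beta\big(f_j(x^k)-f_j(x^*)\big)\nabla f_j(x^k)+z_j^k\big(\nabla f_j(x^k)-\nabla f_j(x^*)\big)$ and applying Cauchy--Schwarz to get $2|J|\sum_{j\in J}(\beta^2B_j^4+|z_j^k|^2L_j^2)\|x^k-x^*\|^2$. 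The one step you flag as delicate is also the one the paper does not actually address: its displayed identity for $D[y^k;z^k]-D[y^*;z^*]$ writes the $\nabla f_j(x^*)$ sum over $J$ only, silently discarding the inactive contribution $\sum_{j\notin J}z_j^k\nabla f_j(x^*)$ (which vanishes only if $z_j^k=0$ for all $j\notin J$, something the local regime does not force), so your concern is legitimate, and your appeal to the geometric contraction $z_j^{k+1}=(1-\rho_z/\beta)z_j^k$ does not by itself produce a bound of this term by $\|x^k-x^*\|$, $\|r^k\|$, or $\|x^{k+1}-x^k\|$ any more than the paper does.
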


\begin{proof}
Note that $$\nabla_x \Psi(x^k,z^k)=\sum_{j=1}^m[\beta f_j(x^k)+z_j^k]_+\nabla f_j(x^k)=\sum_{j\in J}(\beta f_j(x^k)+z_j^k)\nabla f_j(x^k).$$
Hence, from the update of $x$ and the fact $x^{k+1}\in\mathrm{int}(\cX)$, it follows
$$\nabla g(x^k)+A^\top y^k+\beta A^\top r^k + \sum_{j\in J}(\beta f_j(x^k)+z_j^k)\nabla f_j(x^k)+\eta^k (x^{k+1}-x^k)=0.$$
In addition, since $x^*\in \mathrm{int}(\cX)$, it holds that
$$\nabla g(x^*)+A^\top y^* + \sum_{j\in J}z_j^* \nabla f_j(x^*)=0.$$
From the above two equations, it follows that
{\small
\begin{align}\label{eq:bd-yz-x-1}
&\big\|D[y^k; z^k]-D[y^*; z^*]\big\|^2\cr
=&\big\|\nabla g(x^k)-\nabla g(x^*)+\beta A^\top r^k+\sum_{j\in J}(\beta f_j(x^k)+z_j^k)\nabla f_j(x^k)-\sum_{j\in J}z_j^k \nabla f_j(x^*)+\eta^k (x^{k+1}-x^k)\big\|^2\cr
\le & 4\left(\|\nabla g(x^k)-\nabla g(x^*)\|^2+\|\beta A^\top r^k\|^2+\big\|\sum_{j\in J}\big[(\beta f_j(x^k)+z_j^k)\nabla f_j(x^k)-z_j^k \nabla f_j(x^*)\big]\big\|^2+\|\eta^k (x^{k+1}-x^k)\|^2\right)\cr
\le &4 L_g^2\|x^k-x^*\|^2 + 4\beta^2\|A\|^2\|r^k\|^2+4\bar{\eta}^2\|x^{k+1}-x^k\|^2+4\big\|\sum_{j\in J}\big[(\beta f_j(x^k)+z_j^k)\nabla f_j(x^k)-z_j^k \nabla f_j(x^*)\big]\big\|^2.
\end{align}
}
Note $f_j(x^*)=0,\,\forall j\in J$. Hence,
\begin{align*}
&\big\|\sum_{j\in J}\big[(\beta f_j(x^k)+z_j^k)\nabla f_j(x^k)-z_j^k \nabla f_j(x^*)\big]\big\|^2\cr
\le& |J|\sum_{j\in J}\big\|(\beta f_j(x^k)+z_j^k)\nabla f_j(x^k)-z_j^k \nabla f_j(x^*)\big\|^2\cr
=&|J|\sum_{j\in J}\big\|\beta \big(f_j(x^k)-f_j(x^*)\big)\nabla f_j(x^k)+z_j^k\nabla f_j(x^k)-z_j^k \nabla f_j(x^*)\big\|^2\cr
\le & 2|J|\sum_{j\in J}(\beta^2 B_j^4+ |z_j^k|^2 L_j^2) \|x^k-x^*\|^2.
\end{align*}
Plugging in the above inequality into \eqref{eq:bd-yz-x-1} and noting $\nu\big\|[y^k; z^k]-[y^*; z^*]\big\|^2\le \big\|D[y^k; z^k]-D[y^*; z^*]\big\|^2$, we obtain the desired result.
\end{proof}

If necessary, taking a smaller $\gamma$, we can assume
\begin{equation}\label{eq:gap-eta}
\left|\sum_{j\in J}L_j(\beta f_j(\hat{x})+\hat{z}_j)-\sum_{j\in J}L_j(\beta f_j(\tilde{x})+\tilde{z}_j)\right| \le \frac{\mu}{8},\,\forall \hat{x},\tilde{x}\in \cB_\gamma(x^*), \forall \hat{z},\tilde{z}\in\cB_\gamma(z^*).
\end{equation}
Then we have the local linear convergence of Algorithm \ref{alg:lalm} as follows.
\begin{theorem}[Local linear convergence]\label{thm:loc-lin}
Under Assumptions \ref{assump-lip} and \ref{assump-scp}, let $\{w^k\}$ be the sequence generated from Algorithm \ref{alg:lalm} with $w^0$ satisfying \eqref{eq:init-cond}, $\rho_y=\rho_z=\frac{\beta}{2}$, and $\delta>0$. Let $$C=L_g^2+2|J|\left(\sum_{j\in J}\beta^2 B_j^4+ 2L_{\max}^2(|z^*|^2+\gamma^2) \right),$$
where $L_{\max}=\max_j L_j$. For any $\alpha>0$ such that
\begin{align}
\alpha <\min\left(\frac{\mu}{8C},\, \frac{\delta}{\bar{\eta}^2},\, \frac{1}{\beta\|A\|^2}\right),
\end{align} 
it holds $\phi(x^{k+1},y^{k+1},z^{k+1}) \le \sigma\cdot \phi(x^k,y^k,z^k)$, where
$$\phi(x^k,y^k,z^k)=\big(\frac{\mu}{16}+\frac{\eta^k}{2}\big)\|x^{k}-x^*\|^2+\frac{1}{\beta}\big(\|y^k-y^*\|^2+\|z^k-z^*\|^2\big)$$
and
$$\sigma=\max\left(\frac{\alpha C+\bar{\eta}}{\frac{\mu}{8}+\bar{\eta}}, 1-\frac{\alpha\beta\nu}{8}\right)<1.$$
\end{theorem}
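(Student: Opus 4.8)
The plan is to specialize the one-step inequality \eqref{eq:loc-ineq1} to the prescribed stepsizes and then convert it into a genuine contraction by first absorbing the strong-convexity margin into $\phi(w^{k+1})$ and then trading the multiplier energy $\tfrac1\beta(\|y^k-y^*\|^2+\|z^k-z^*\|^2)$ for primal quantities via Lemma \ref{lem:bd-yz-x}. First I would take $\theta=\tfrac12$, which is admissible since $\rho_z=\tfrac\beta2=\beta(1-\theta)$ and $\rho_y=\tfrac\beta2\le\beta$. With these choices $\tfrac1{2\rho_y}=\tfrac1{2\rho_z}=\tfrac1\beta$ and the coefficient of $\|r^{k+1}\|^2$ in \eqref{eq:loc-ineq1}, namely $-\tfrac\beta2+\beta(1-\tfrac\theta2)-\tfrac{\rho_y}2$, collapses to $0$, so \eqref{eq:loc-ineq1} reads
\begin{equation*}
\Big(\tfrac{\mu}{8}+\tfrac{\eta^k}{2}\Big)\|x^{k+1}-x^*\|^2+\tfrac1\beta\big(\|y^{k+1}-y^*\|^2+\|z^{k+1}-z^*\|^2\big)+\tfrac12\|x^{k+1}-x^k\|_M^2 \le \tfrac{\eta^k}{2}\|x^k-x^*\|^2-\tfrac\beta2\|r^k\|^2+\tfrac1\beta\big(\|y^k-y^*\|^2+\|z^k-z^*\|^2\big),
\end{equation*}
where $M=(\eta^k-L_g-L_\Psi^k)I-\beta A^\top A\succeq\delta I$ because $\eta^k\ge L_F^k+\delta$. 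Throughout I use that $\max(\|x^k-x^*\|,\|y^k-y^*\|,\|z^k-z^*\|)\le\gamma$ for every $k$, which was already established at the start of the proof of Theorem \ref{thm:pre-lin} from \eqref{eq:init-cond} and \eqref{eq:tele-ineq2}; this keeps all iterates in the balls where Proposition \ref{prop:closeness}, Lemma \ref{lem:bd-yz-x}, and \eqref{eq:gap-eta} apply.

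The crucial step is to read the left side above as an upper bound for $\phi(w^{k+1})+\tfrac12\|x^{k+1}-x^k\|_M^2$. The multiplier parts match exactly, so it remains to dominate the $x$-coefficient, i.e.\ to show $\tfrac{\mu}{16}+\tfrac{\eta^{k+1}}{2}\le\tfrac{\mu}{8}+\tfrac{\eta^k}{2}$, equivalently $\eta^{k+1}-\eta^k\le\tfrac{\mu}{8}$. I would prove this increment bound directly: if $\eta^{k+1}=\eta^k$ it is trivial, and otherwise $\eta^{k+1}=L_F^{k+1}+\delta$, whence $\eta^{k+1}-\eta^k\le L_\Psi^{k+1}-L_\Psi^k$ using $\eta^k\ge L_F^k+\delta$. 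Inside the balls, \eqref{eq:close-result} forces $[\beta f_j(x)+z_j]_+=\beta f_j(x)+z_j$ for $j\in J$ and $0$ for $j\notin J$, so from \eqref{eq:def-Lpsi-xz} one gets $L_\Psi(x,z)=\sum_j\beta B_j^2+\sum_{j\in J}L_j(\beta f_j(x)+z_j)$; hence $L_\Psi^{k+1}-L_\Psi^k$ is precisely the difference bounded by $\tfrac{\mu}{8}$ in \eqref{eq:gap-eta}. Combining with the displayed inequality yields the descent estimate
\begin{equation*}
\phi(w^{k+1})\le \phi(w^k)-\tfrac{\mu}{16}\|x^k-x^*\|^2-\tfrac\beta2\|r^k\|^2-\tfrac12\|x^{k+1}-x^k\|_M^2.
\end{equation*}

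It then suffices to verify the reduced inequality $(1-\sigma)\phi(w^k)\le\tfrac{\mu}{16}\|x^k-x^*\|^2+\tfrac\beta2\|r^k\|^2+\tfrac12\|x^{k+1}-x^k\|_M^2$. I would bound the multiplier energy in $\phi(w^k)$ by Lemma \ref{lem:bd-yz-x}: using $1-\sigma\le\tfrac{\alpha\beta\nu}{8}$ (valid since $\sigma\ge1-\tfrac{\alpha\beta\nu}{8}$) and the estimate $4L_g^2+8|J|\sum_{j\in J}(\beta^2B_j^4+|z_j^k|^2L_j^2)\le4C$ (which follows from $\|z^k\|^2\le2(\|z^*\|^2+\gamma^2)$ in the ball), the term $(1-\sigma)\tfrac1\beta(\|y^k-y^*\|^2+\|z^k-z^*\|^2)$ is dominated by $\tfrac{\alpha C}{2}\|x^k-x^*\|^2+\tfrac{\alpha\beta^2\|A\|^2}{2}\|r^k\|^2+\tfrac{\alpha\bar\eta^2}{2}\|x^{k+1}-x^k\|^2$. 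The conditions $\alpha<\tfrac1{\beta\|A\|^2}$ and $\alpha<\tfrac{\delta}{\bar\eta^2}$ (with $M\succeq\delta I$) make the last two pieces fit inside $\tfrac\beta2\|r^k\|^2$ and $\tfrac12\|x^{k+1}-x^k\|_M^2$. What remains is the $\|x^k-x^*\|^2$ coefficient, for which one needs $(1-\sigma)\big(\tfrac{\mu}{16}+\tfrac{\eta^k}{2}\big)+\tfrac{\alpha C}{2}\le\tfrac{\mu}{16}$, equivalently $(1-\sigma)\tfrac{\eta^k}{2}+\tfrac{\alpha C}{2}\le\sigma\tfrac{\mu}{16}$. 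Its left side is nonincreasing and its right side nondecreasing in $\sigma$, and the left side is increasing in $\eta^k\le\bar\eta$, so the binding case is $\sigma=\sigma_1,\ \eta^k=\bar\eta$, where it becomes $(1-\sigma_1)\bar\eta+\alpha C=\sigma_1\tfrac{\mu}{8}$ — exactly the definition of $\sigma_1$. Finally $\sigma<1$ follows because $\alpha<\tfrac{\mu}{8C}$ gives $\sigma_1<1$, and $\sigma_2=1-\tfrac{\alpha\beta\nu}{8}<1$ is immediate.

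The main obstacle is the $\eta^{k+1}$-versus-$\eta^k$ mismatch in the second paragraph: $\phi(w^{k+1})$ carries the updated stepsize $\eta^{k+1}$ while \eqref{eq:loc-ineq1} only supplies $\eta^k$, and closing this gap is exactly what forces the refinement \eqref{eq:gap-eta} together with the local linearization of $L_\Psi$ guaranteed by the frozen active-set structure of Proposition \ref{prop:closeness}. The rest is careful bookkeeping, tracking the three thresholds $\tfrac{\mu}{8C}$, $\tfrac{\delta}{\bar\eta^2}$, $\tfrac1{\beta\|A\|^2}$ so that each of the $\|x^k-x^*\|^2$, $\|r^k\|^2$, and $\|x^{k+1}-x^k\|^2$ budgets coming out of Lemma \ref{lem:bd-yz-x} is matched, with only the $\|x^k-x^*\|^2$ estimate being tight.
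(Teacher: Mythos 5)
Your proposal is correct and follows essentially the same route as the paper: it specializes \eqref{eq:loc-ineq1} with $\theta=\tfrac12$, invokes Lemma \ref{lem:bd-yz-x} with the same effective weight $\tfrac{\alpha}{8}$ (so that the three thresholds on $\alpha$ play identical roles), and closes the $\eta^{k+1}$-versus-$\eta^k$ gap via \eqref{eq:gap-eta} exactly as in \eqref{eq:eta-k1}. The only difference is cosmetic — you pass through a descent inequality and verify $(1-\sigma)\phi(w^k)\le \tfrac{\mu}{16}\|x^k-x^*\|^2+\tfrac{\beta}{2}\|r^k\|^2+\tfrac12\|x^{k+1}-x^k\|_M^2$ instead of matching coefficients directly in \eqref{eq:loc-lin-ineq1}, which is algebraically equivalent.
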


\begin{proof}
Adding $\frac{\alpha}{8}$ of \eqref{eq:bd-yz-x} to \eqref{eq:loc-ineq1} with $\theta=\frac{1}{2}$, and noting $\alpha \bar{\eta}^2 I \preceq (\eta^k-L_g-L_\Psi^k)I -\beta A^\top A,\,\forall k$ and $\alpha\beta^2\|A\|^2\le \beta$ gives
\begin{align}\label{eq:loc-lin-ineq1}
&\big(\frac{\mu}{8}+\frac{\eta^k}{2}\big)\|x^{k+1}-x^*\|^2+\frac{1}{\beta}\big(\|y^{k+1}-y^*\|^2+\|z^{k+1}-z^*\|^2\big)\nonumber\\
\le &\big(\frac{\alpha C}{2}+\frac{\eta^k}{2}\big)\|x^k-x^*\|^2+\big(\frac{1}{\beta}-\frac{\alpha\nu}{8}\big)\big(\|y^k-y^*\|^2+\|z^k-z^*\|^2\big).
\end{align}
Let 
$$\eta_{\max}=\delta+L_g+\beta\|A\|^2+\sum_{j=1}^m\beta B_j^2+\max_{\substack{x\in\cB_\gamma(x^*)\\ z\in\cB_\gamma(z^*)}}\sum_{j\in J}L_j(\beta f_j(x)+z_j).$$
From the setting of $\eta^k$, we have that
$$\eta^k=\max(L_F^0,L_F^1, \ldots, L_F^k) + \delta \le \eta_{\max},\,\forall k.$$
In addition, \eqref{eq:gap-eta} indicates $\eta^{k+1}-\eta^k \le \eta_{\max} - (L_F^k+\delta) \le \frac{\mu}{8}$. Hence, 
\begin{equation}\label{eq:eta-k1}
\frac{\mu}{16}+\frac{\eta^{k+1}}{2}\le \frac{\mu}{8}+\frac{\eta^k}{2}.
\end{equation} 
Since $\frac{\alpha C}{2}+\frac{\eta^k}{2}\le \sigma \big(\frac{\mu}{16}+\frac{\eta^k}{2}\big)$ and $\frac{1}{\beta}-\frac{\alpha\nu}{8}\le  \frac{\sigma}{\beta}$, we have the desired result from \eqref{eq:loc-lin-ineq1}, \eqref{eq:eta-k1}, and the definition of $\phi$.
\end{proof}

\begin{remark}
In Theorem \ref{thm:loc-lin}, the setting of $\rho_y=\rho_z=\frac{\beta}{2}$ is for simplicity of the analysis. The local linear convergence can be obtained for any $\rho_y,\rho_z\in (0,\beta)$. Therefore, from Theorems \ref{thm:convg-lalm} and \ref{thm:loc-lin}, the algorithm may eventually converge linearly. This phenomenon is observed from our numerical experiments; see Figures \ref{fig:bpdn} and \ref{fig:qcqp}.
\end{remark}

\section{Block linearized augmented Lagrangian method}\label{sec:blalm}
In this section, we assume that in \eqref{eq:ccp}, $x$ can be partitioned into $n$ disjoint blocks and the non-differentiable part $h(x)$ is separable, i.e., 
 $$x=(x_1,x_2,\ldots,x_n),\quad h(x)=\sum_{i=1}^n h_i(x_i).$$ 
 Correspondingly, $A$ can be written as the block matrix format $[A_1,\ldots,A_n]$. 
 
 \subsection{Algorithm}
 Towards a solution of the block structured problem, we propose a block linearized augmented Lagrangian method (BLALM). At each iteration, it randomly picks one block primal variable to update and then immediately renews the multipliers. The method is summarized in Algorithm \ref{alg:blalm}.

\begin{algorithm}
\caption{Block linearized augmented Lagrangian method for \eqref{eq:ccp}}\label{alg:blalm}
\DontPrintSemicolon
\textbf{Initialization:} choose $x^0, y^0, z^0$ and $\beta,\rho_y,\rho_z,\veta=[\eta_1,\cdots,\eta_n]$; let $r^0=Ax^0-b$\;
\For{$k=0,1,\ldots$}{
Pick $i_k\in [n]$ uniformly at random and perform the updates
\begin{subequations}
\begin{align}
x^{k+1}_i=&~\left\{\begin{array}{ll}\underset{x_i}\argmin ~h_i(x_i) + \langle \nabla_{x_i} F_\beta(w^k), x_i\rangle + \frac{\eta_i}{2}\|x_i-x^k_i\|^2,& \text{ if }i=i_k\\[0.1cm]
x_i^k, & \text{ if }i\neq i_k\end{array}\right.\label{eq:blalm-x}\\[0.1cm]
r^{k+1} = &~ r^k + A_{i_k}(x_{i_k}^{k+1}-x_{i_k}^k),\nonumber\\[0.1cm]
y^{k+1}=&~y^k + \rho_y r^{k+1}\label{eq:blalm-y},\\
z^{k+1}_j=&~z_j^k+\rho_z \cdot\max\left(-\frac{z_j^k}{\beta}, f_j(x^{k+1})\right), j=1,\ldots, m.\label{eq:blalm-z}
\end{align}
\end{subequations}
}
\end{algorithm}

To make Algorithm \ref{alg:blalm} efficient, we require \eqref{eq:ccp} to have the so-called coordinate friendly structure \cite{peng2016cf}. Roughly speaking, computing all $n$ block partial gradients $\nabla_{x_i} F_\beta$ has nearly the same complexity as a full gradient evaluation. In addition, $f(x^{k+1})$ can be easily calculated from $x^k$, $f(x^k)$ and the change of $x_{i_k}$.

We let $\ell_i^k$ be the Lipschitz constant of $\nabla_{x_i} g(x)+\nabla_{x_i}\Psi(x,z^k)$ with respect to $x_i$ for every $i=1,\ldots,n$ and $\vell^k=[\ell_1^k,\cdots,\ell_n^k]$. In general, $\ell_i^k$ can be significantly smaller than the Lipschitz constant of $\nabla g(x)+\nabla_x \Psi(x,z)$, and thus a larger stepsize can be made if a single block is updated instead of all blocks.


\subsection{Convergence analysis}

To show the convergence results of Algorithm \ref{alg:blalm}, we first establish a fundamental result that is similar to Theorem \ref{thm:1iter}.

\begin{theorem}[One-iteration result of BLALM]
Let $\{w^k\}$ be the sequence generated from Algorithm \ref{alg:blalm}. Then for any $x$ such that $Ax=b$ and $f_j(x) \le 0,\forall j\in [m]$, it holds
\begin{align}\label{eq:b-1iter}
&~\EE_{i_k}\left[f_0(x^{k+1})-f_0(x)+\langle y^{k+1}, r^{k+1}\rangle + (\beta-\rho_y)\|r^{k+1}\|^2+\Psi_\beta(x^{k+1},z^k)\right]\cr
&~+\frac{1}{2}\EE_{i_k}\left[\|x^{k+1}-x\|_{\veta}^2-\|x^k-x\|_{\veta}^2+\|x^{k+1}-x^k\|_{\veta-\vell^k}^2\right]-\frac{\beta}{2}\EE_{i_k}\big[\|r^{k+1}\|^2-\|r^k\|^2+\|x^{k+1}-x^k\|_{A^\top A}^2\big]\cr
\le &~ \big(1-\frac{1}{n}\big)\big[f_0(x^k)-f_0(x)+\langle y^k, r^k\rangle + \beta\|r^k\|^2+\Psi_\beta(x^k,z^k)\big].
\end{align}
\end{theorem}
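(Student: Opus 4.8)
The plan is to run the argument of Theorem~\ref{thm:1iter} on the single block that is updated and then average over the random index $i_k$; the factor $1-\tfrac1n$ will appear entirely from this averaging. For $i\in[n]$ let $\hat{x}^{(i)}$ denote the point that agrees with $x^k$ except that its $i$-th block is replaced by the minimizer in \eqref{eq:blalm-x}, so that $x^{k+1}=\hat{x}^{(i_k)}$, $\EE_{i_k}[\phi(x^{k+1})]=\tfrac1n\sum_{i=1}^n\phi(\hat{x}^{(i)})$ for any $\phi$, and $d_i:=\hat{x}^{(i)}_i-x_i^k$ is the $i$-th block step. From the optimality of \eqref{eq:blalm-x}, a subgradient of $h_i$ at $\hat{x}^{(i)}_i$ equals $-\nabla_{x_i}F_\beta(w^k)-\eta_i d_i$, so convexity of $h_i$ gives
$$h_i(\hat{x}^{(i)}_i)-h_i(x_i)\le \big\langle \nabla_{x_i}F_\beta(w^k)+\eta_i d_i,\, x_i-\hat{x}^{(i)}_i\big\rangle,$$
while Lemmas~\ref{lem:lip} and \ref{lem:lip-ineq} applied along block $i$ yield the block descent bound $g(\hat{x}^{(i)})+\Psi_\beta(\hat{x}^{(i)},z^k)\le g(x^k)+\Psi_\beta(x^k,z^k)+\langle\nabla_{x_i}[g+\Psi_\beta](x^k,z^k),d_i\rangle+\tfrac{\ell_i^k}{2}\|d_i\|^2$.

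The key averaging step is the following. Since $h$ is separable and every block is left unchanged with probability $1-\tfrac1n$, one has exactly $\EE_{i_k}[h(x^{k+1})]-(1-\tfrac1n)h(x^k)-\tfrac1n h(x)=\tfrac1n\sum_i[h_i(\hat{x}^{(i)}_i)-h_i(x_i)]$; this is where $1-\tfrac1n$ is born. For the coupled smooth parts I combine the block descent bound with convexity of $g$ and of $\Psi_\beta(\cdot,z^k)$ at $x^k$ toward the reference $x$, using the elementary identity $\tfrac1n(g(x^k)-g(x))+\tfrac1n\sum_i\langle\nabla_{x_i}g(x^k),d_i\rangle\le\tfrac1n\sum_i\langle\nabla_{x_i}g(x^k),\hat{x}^{(i)}_i-x_i\rangle$ (and likewise for $\Psi_\beta$, discarding the reference term via $\Psi_\beta(x,z^k)\le0$, which holds because $f_j(x)\le0$ and $z_j^k\ge0$). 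Adding these bounds to the averaged $h_i$-inequality makes the $\nabla_{x_i}[g+\Psi_\beta](x^k,z^k)$ inner products cancel in pairs, leaving only the proximal terms $\tfrac1n\sum_i\eta_i\langle d_i,x_i-\hat{x}^{(i)}_i\rangle$, the affine terms $\tfrac1n\sum_i\langle A_i^\top(y^k+\beta r^k),x_i-\hat{x}^{(i)}_i\rangle$, and the quadratic $\tfrac12\EE_{i_k}\|x^{k+1}-x^k\|_{\vell^k}^2$.

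It remains to match these leftovers with the residual and proximal terms of \eqref{eq:b-1iter}. The three-point identity turns $\tfrac1n\sum_i\eta_i\langle d_i,x_i-\hat{x}^{(i)}_i\rangle$ into $-\tfrac12\EE_{i_k}[\|x^{k+1}-x\|_{\veta}^2-\|x^k-x\|_{\veta}^2+\|x^{k+1}-x^k\|_{\veta}^2]$, and the $\|x^{k+1}-x^k\|_{\veta}^2$ here combines with $\tfrac12\EE_{i_k}\|x^{k+1}-x^k\|_{\vell^k}^2$ to produce the stated $\tfrac12\|x^{k+1}-x^k\|_{\veta-\vell^k}^2$. For the affine terms I write $A_i(x_i-\hat{x}^{(i)}_i)=A_i(x_i-x_i^k)-(\hat{r}^{(i)}-r^k)$ with $\hat{r}^{(i)}=r^k+A_id_i$; summing, $\sum_iA_i(x_i-x_i^k)=A(x-x^k)=-r^k$ since $Ax=b$, and $\tfrac1n\sum_i\hat{r}^{(i)}=\EE_{i_k}r^{k+1}$, so the affine contribution equals $(1-\tfrac1n)\langle y^k+\beta r^k,r^k\rangle-\EE_{i_k}\langle y^k+\beta r^k,r^{k+1}\rangle$. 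Finally, rewriting $\langle y^{k+1},r^{k+1}\rangle+(\beta-\rho_y)\|r^{k+1}\|^2-\tfrac\beta2(\|r^{k+1}\|^2-\|r^k\|^2+\|x^{k+1}-x^k\|_{A^\top A}^2)=\langle y^k+\beta r^k,r^{k+1}\rangle$ (using $y^{k+1}=y^k+\rho_y r^{k+1}$ and $\|x^{k+1}-x^k\|_{A^\top A}^2=\|r^{k+1}-r^k\|^2$, exactly as in \eqref{eq:ineq-y-r}) shows the entire residual/penalty part cancels the affine contribution together with the $(1-\tfrac1n)\langle y^k+\beta r^k,r^k\rangle$ on the right, so \eqref{eq:b-1iter} follows after rearrangement. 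I expect the main obstacle to lie in the coefficient bookkeeping of these last two paragraphs: one must keep the $\tfrac1n$-weighted ``freshly updated'' contributions and the $(1-\tfrac1n)$-weighted ``stale'' contributions of the \emph{non-separable} $g$ and $\Psi_\beta$ strictly separate, and verify that after the gradient cancellation the affine pieces assemble---via $Ax=b$---into precisely the residual terms, with no stray $\tfrac1n$ left over.
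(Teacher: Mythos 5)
Your proposal is correct and follows essentially the same route as the paper: the block optimality condition, term-by-term convexity/block-descent bounds on $h_i$, $g+\Psi_\beta$, the affine part, and the proximal part, followed by averaging over $i_k$ to produce the $1-\tfrac1n$ factor and discarding $\tfrac1n\Psi_\beta(x,z^k)\le 0$. The only cosmetic difference is that you derive the residual identity $\EE_{i_k}\langle x_{i_k}^{k+1}-x_{i_k},A_{i_k}^\top(y^k+\beta r^k)\rangle$ from scratch via $\hat{r}^{(i)}=r^k+A_id_i$, whereas the paper imports it from an external lemma; both computations agree.
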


\begin{proof}
From the update of $x_{i_k}$, we have
\begin{equation}\label{eq:opt-x-ik}
0\in\partial h_{i_k}(x_{i_k}^{k+1})+\nabla_{x_{i_k}}g(x^k)+A_{i_k}^\top(y^k+\beta r^k)+\nabla_{x_{i_k}} \Psi_\beta(x^k, z^k)+\eta_{i_k}(x_{i_k}^{k+1}-x^k_{i_k}).
\end{equation}
Note that for any $x$,
\begin{align}\label{eq:b-h-term}
\EE_{i_k}\big\langle x_{i_k}^{k+1}-x_{i_k}, \tilde{\nabla} h_{i_k}(x_{i_k}^{k+1})\big\rangle \ge & ~\EE _{i_k} [h_{i_k}(x_{i_k}^{k+1})-h_{i_k}(x_{i_k})]\cr
=&~\EE _{i_k} [h_{i_k}(x_{i_k}^{k+1})-h_{i_k}(x_{i_k}^k)+h_{i_k}(x_{i_k}^k)-h_{i_k}(x_{i_k})]\cr
=&~\EE_{i_k}[h(x^{k+1}-h(x^k)] + \frac{1}{n} [h(x^k)-h(x)],
\end{align}
and
\begin{align}\label{eq:b-psi-term}
&~\EE_{i_k}\big\langle x_{i_k}^{k+1}-x_{i_k},\nabla_{x_{i_k}}g(x^k)+\nabla_{x_{i_k}} \Psi_\beta(x^k, z^k)\big\rangle\cr
=&~\EE_{i_k}\big\langle x_{i_k}^{k+1}-x_{i_k}^k+x_{i_k}^k-x_{i_k},\nabla_{x_{i_k}}g(x^k)+\nabla_{x_{i_k}} \Psi_\beta(x^k, z^k)\big\rangle\cr
\ge&~ \EE_{i_k}\big[g(x^{k+1})+\Psi_\beta(x^{k+1}, z^k)-g(x)-\Psi_\beta(x^k, z^k)-\frac{1}{2}\|x^{k+1}-x^k\|_{\vell^k}^2\big]\\
&~ + \frac{1}{n} \big[g(x^k)+\Psi_\beta(x^k, z^k)-g(x)-\Psi_\beta(x, z^k)\big]\nonumber.
\end{align}
In addition, for any $x$ such that $Ax=b$, we have from \cite[Lemma 3.2]{xu2017async-pd} that
\begin{align}\label{eq:b-y-term}
\EE_{i_k}\big\langle x_{i_k}^{k+1}-x_{i_k}, A_{i_k}^\top(y^k+\beta r^k)\big\rangle=&-\big(1-\frac{1}{n}\big)(\langle y^k, r^k\rangle+\beta\|r^k\|^2)+\EE_{i_k}\big[\langle y^k, r^{k+1}\rangle + \beta \|r^{k+1}\|^2\big]\nonumber\\
&~ -\frac{\beta}{2}\EE_{i_k}\big[\|r^{k+1}\|^2-\|r^k\|^2+\|x^{k+1}-x^k\|_{A^\top A}^2\big].
\end{align}
Furthermore,
\begin{equation}\label{eq:b-eta-term}
\EE_{i_k}\langle x_{i_k}^{k+1}-x_{i_k}, \eta_{i_k}(x_{i_k}^{k+1}-x^k_{i_k})\rangle = \frac{1}{2}\EE_{i_k}\big[\|x^{k+1}-x\|_{\veta}^2-\|x^k-x\|_{\veta}^2+\|x^{k+1}-x^k\|_{\veta}^2\big].
\end{equation}
Adding \eqref{eq:b-h-term} through \eqref{eq:b-eta-term}, we have from \eqref{eq:opt-x-ik} that
\begin{align*}
&~\EE_{i_k}\left[f_0(x^{k+1})-f_0(x)+\langle y^k, r^{k+1}\rangle + \beta\|r^{k+1}\|^2+\Psi_\beta(x^{k+1},z^k)\right]\cr
&~+\frac{1}{2}\EE_{i_k}\left[\|x^{k+1}-x\|_{\veta}^2-\|x^k-x\|_{\veta}^2+\|x^{k+1}-x^k\|_{\veta-\vell^k}^2\right]-\frac{\beta}{2}\EE_{i_k}\big[\|r^{k+1}\|^2-\|r^k\|^2+\|x^{k+1}-x^k\|_{A^\top A}^2\big]\cr
\le &~ \big(1-\frac{1}{n}\big)\big[f_0(x^k)-f_0(x)+\langle y^k, r^k\rangle + \beta\|r^k\|^2+\Psi_\beta(x^k,z^k)\big]+\frac{1}{n}\Psi_\beta(x,z^k).
\end{align*}
Since $y^{k+1}=y^k+\rho_y r^{k+1}$ and $\Psi_\beta(x,z^k)\le 0$, \eqref{eq:b-1iter} is obtained from the above inequality.
\end{proof}

We also need the next lemma.
\begin{lemma}\label{lem:psi-k}
For any $\rho_z\le\beta$,
\begin{align}\label{eq:psi-k}
-\frac{1}{\rho_z}\|z^{k+1}-z^k\|^2\le \Psi_\beta(x^{k+1},z^k)-\Psi_\beta(x^{k+1},z^{k+1}) 
\end{align}
\end{lemma}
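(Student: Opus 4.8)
The plan is to reduce the claim to a single-coordinate, first-order (concavity) estimate on $\psi_\beta$ in its second argument. Fix an index $j$ and abbreviate $u_j = f_j(x^{k+1})$; since $x^{k+1}$ is held fixed throughout, comparing $\Psi_\beta(x^{k+1},z^k)$ with $\Psi_\beta(x^{k+1},z^{k+1})$ amounts to comparing $\psi_\beta(u_j, z_j^k)$ with $\psi_\beta(u_j, z_j^{k+1})$ for each $j$, so the entire argument is really about the one-variable map $v \mapsto \psi_\beta(u_j, v)$.

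First I would record two elementary facts about $v \mapsto \psi_\beta(u,v)$ for fixed $u$. On the branch $\beta u + v \ge 0$ it equals $uv + \frac{\beta}{2}u^2$, which is affine in $v$, and on the branch $\beta u + v < 0$ it equals $-\frac{v^2}{2\beta}$, which is concave; a direct check at the seam $v = -\beta u$ shows the two pieces agree in value and in first derivative, so $\psi_\beta(u,\cdot)$ is $C^1$ and globally concave in $v$, with
$$\frac{\partial}{\partial v}\psi_\beta(u,v) = \max\Big(-\tfrac{v}{\beta},\, u\Big).$$
The crucial observation is that this derivative is precisely the quantity driving the multiplier update: evaluating it at $(u,v) = (f_j(x^{k+1}), z_j^k)$ gives $\max(-\frac{z_j^k}{\beta}, f_j(x^{k+1}))$, and \eqref{eq:blalm-z} reads $z_j^{k+1} - z_j^k = \rho_z \max(-\frac{z_j^k}{\beta}, f_j(x^{k+1}))$. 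Hence $\frac{\partial}{\partial v}\psi_\beta(u_j, z_j^k) = \frac{1}{\rho_z}(z_j^{k+1}-z_j^k)$.

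Next I would invoke the first-order upper bound for the concave function $\psi_\beta(u_j,\cdot)$ with base point $z_j^k$ and query point $z_j^{k+1}$:
$$\psi_\beta(u_j, z_j^{k+1}) - \psi_\beta(u_j, z_j^k) \le \frac{\partial}{\partial v}\psi_\beta(u_j, z_j^k)\,(z_j^{k+1} - z_j^k) = \frac{1}{\rho_z}(z_j^{k+1}-z_j^k)^2,$$
where the last equality just substitutes the identity from the previous step. Summing over $j=1,\ldots,m$ and recalling $\Psi_\beta(x,z)=\sum_j \psi_\beta(f_j(x),z_j)$ gives $\Psi_\beta(x^{k+1},z^{k+1}) - \Psi_\beta(x^{k+1},z^k) \le \frac{1}{\rho_z}\|z^{k+1}-z^k\|^2$, which is exactly \eqref{eq:psi-k} after moving terms across.

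The only genuine content is the concavity and $C^1$ verification of $\psi_\beta(u,\cdot)$ together with matching its $v$-derivative to the max-expression that governs the update; everything afterwards is substitution and summation, so I anticipate no real obstacle. It is worth noting that the concavity argument delivers the bound for every $\rho_z>0$, so the stated hypothesis $\rho_z\le\beta$ does not appear to be needed for \eqref{eq:psi-k} itself (it is invoked elsewhere, e.g.\ to keep $z^k\ge 0$). If one prefers to avoid the concavity step, the same estimate follows from a direct two-case split on the sign of $\beta f_j(x^{k+1})+z_j^k$ — with equality on the first branch — at the cost of carrying along the nonnegativity of $z^k$ that $\rho_z\le\beta$ supplies.
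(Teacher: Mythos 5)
Your proof is correct, and it takes a genuinely different route from the paper's. The paper proves \eqref{eq:psi-k} by brute force: it partitions $[m]$ into three index sets according to the signs of $\beta f_j(x^{k+1})+z_j^k$ and $\beta f_j(x^{k+1})+z_j^{k+1}$, writes out $\Psi_\beta(x^{k+1},z^k)-\Psi_\beta(x^{k+1},z^{k+1})$ explicitly on each set, and controls the middle (branch-switching) set via Young's inequality and the last set via the explicit formula $z_j^{k+1}=(1-\tfrac{\rho_z}{\beta})z_j^k$ --- which is where the hypotheses $\rho_z\le\beta$ and $z_j^k\ge 0$ enter. You instead observe that $v\mapsto\psi_\beta(u,v)$ is $C^1$ and concave with $\tfrac{\partial}{\partial v}\psi_\beta(u,v)=\max(-\tfrac{v}{\beta},u)$, note that the $z$-update is exactly a step of length $\rho_z$ along this derivative, and apply the first-order upper bound for concave functions; the case analysis disappears entirely. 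Your observation that concavity delivers the bound for every $\rho_z>0$, so that $\rho_z\le\beta$ is not actually needed for \eqref{eq:psi-k} itself, is accurate and is a small sharpening of the statement (the paper uses that hypothesis only through $z^k\ge 0$ and the explicit form of the update on the third index set, both of which your argument bypasses). Your interpretation of the multiplier update as dual gradient ascent on $\psi_\beta$ also gives a cleaner conceptual reading of the lemma; the paper's computation buys nothing extra here beyond making the equality case on $J_1^k$ visible.
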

\begin{proof}
Since $\rho_z\le\beta$, we have $z_j^k\ge 0,\forall k$.
Let 
\begin{subequations}\label{eq:def-Jk}
\begin{align}
&J_1^k = \{j\in [m]: \beta f_j(x^{k+1})+z_j^k \ge 0,\, \beta f_j(x^{k+1})+z_j^{k+1} \ge 0\},\\
&J_2^k = \{j\in [m]: \beta f_j(x^{k+1})+z_j^k \ge 0,\, \beta f_j(x^{k+1})+z_j^{k+1} < 0\},\\
&J_3^k = \{j\in [m]: \beta f_j(x^{k+1})+z_j^k < 0\}.
\end{align}
\end{subequations}
For any $j\in J_1^k \cup J_2^k$, $z_j^{k+1}=z_j^k+\rho_z f_j(x^{k+1})$, and for any $j\in J_3^k$, $z_j^{k+1}=\big(1-\frac{\rho_z}{\beta}\big)z_j^k$ and $\beta f_j(x^{k+1})+z_j^{k+1} < 0.$ Hence,
\begin{align}\label{eq:diff-Psik}
&\Psi_\beta(x^{k+1},z^k)-\Psi_\beta(x^{k+1},z^{k+1})\cr
=&\sum_{j\in J_1^k\cup J_2^k}\left(z_j^k f_j(x^{k+1})+\frac{\beta}{2}[f_j(x^{k+1})]^2\right)-\sum_{j\in J_3^k}\frac{(z_j^k)^2}{2\beta}\cr
&-\sum_{j\in J_1^k}\left(z_j^{k+1} f_j(x^{k+1})+\frac{\beta}{2}[f_j(x^{k+1})]^2\right)+\sum_{j\in J_2^k\cup J_3^k}\frac{(z_j^{k+1})^2}{2\beta}\cr
=&-\sum_{j\in J_1^k}\rho_z[f_j(x^{k+1})]^2 +\sum_{j\in J_2^k}\left(z_j^k f_j(x^{k+1})+\frac{\beta}{2}[f_j(x^{k+1})]^2+\frac{(z_j^{k+1})^2}{2\beta}\right) \\
&-\sum_{j\in J_3^k}\frac{(z_j^k)^2-(z_j^{k+1})^2}{2\beta}\nonumber
\end{align}

For $j\in J_2^k$, we have
\begin{align}\label{eq:ineqJ2}
&~z_j^k f_j(x^{k+1})+\frac{\beta}{2}[f_j(x^{k+1})]^2+\frac{(z_j^{k+1})^2}{2\beta}\cr
=&~z_j^k f_j(x^{k+1})+\frac{\beta}{2}[f_j(x^{k+1})]^2+\frac{(z_j^k+\rho_z f_j(x^{k+1}))^2}{2\beta}\cr
=&~\frac{(z_j^k)^2}{2\beta}+ \big(1+\frac{\rho_z}{\beta}\big)z_j^k f_j(x^{k+1})+\big(\frac{\beta}{2}+\frac{\rho_z^2}{2\beta}\big)[f_j(x^{k+1})]^2\cr
\ge&~-\rho_z[f_j(x^{k+1})]^2,
\end{align}
where the inequality follows from the Young's inequality. For $j\in J_3^k$, we have 
\begin{align}\label{eq:ineqJ3}
-\frac{(z_j^k)^2-(z_j^{k+1})^2}{2\beta}=-\left(1-\big(1-\frac{\rho_z}{\beta}\big)^2\right)\frac{(z_j^k)^2}{2\beta}\ge -\frac{\rho_z}{\beta^2}(z_j^k)^2.
\end{align}
Plugging \eqref{eq:ineqJ2} and \eqref{eq:ineqJ3} into \eqref{eq:diff-Psik} gives
\begin{equation*}
\Psi_\beta(x^{k+1},z^k)-\Psi_\beta(x^{k+1},z^{k+1}) \ge -\sum_{j\in J_1^k\cup J_2^k}\rho_z[f_j(x^{k+1})]^2-\sum_{j\in J_3^k}\frac{\rho_z}{\beta^2}(z_j^k)^2=-\frac{1}{\rho_z}\|z^{k+1}-z^k\|^2,
\end{equation*}
which completes the proof.
\end{proof}

The following results are easy to show from the Young's inequality and the update rule of $z$.
\begin{lemma}\label{lem:yz-k-bd}
For any $y$ and $z\ge0$, 
\begin{equation}\label{eq:y-k-bd}
\langle y, r^{k+1}\rangle\le \langle y^k, r^{k+1}\rangle+\frac{\beta}{2}\|r^{k+1}\|^2+\frac{1}{2\beta}\|y^k-y\|^2,
\end{equation}
and
\begin{equation}\label{eq:z-k-bd}
0\le \Psi_\beta(x^{k+1},z^k)-\sum_{j=1}^m z_j f_j(x^{k+1})+\frac{1}{2\beta}\|z^k-z\|^2
\end{equation}
\end{lemma}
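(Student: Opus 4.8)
The plan is to handle the two inequalities separately, as they are essentially independent, and both reduce to elementary scalar estimates once the right reformulation is in hand.

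For \eqref{eq:y-k-bd}, I would simply split $\langle y, r^{k+1}\rangle = \langle y^k, r^{k+1}\rangle + \langle y-y^k, r^{k+1}\rangle$ and bound the cross term by Young's inequality, namely $\langle a,b\rangle \le \frac{1}{2\beta}\|a\|^2 + \frac{\beta}{2}\|b\|^2$ for any $a,b$. Taking $a = y-y^k$ and $b = r^{k+1}$ gives the claim immediately; this step presents no difficulty.

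For \eqref{eq:z-k-bd}, I would first observe that $\Psi_\beta(x^{k+1},z^k) = \sum_{j=1}^m \psi_\beta(f_j(x^{k+1}),z_j^k)$ and $\|z^k-z\|^2 = \sum_{j=1}^m (z_j^k-z_j)^2$ both split over the $m$ coordinates, so it suffices to prove, for each $j$ and with the shorthand $u=f_j(x^{k+1})$, $v=z_j^k$, $t=z_j\ge 0$, the scalar inequality
\begin{equation*}
\psi_\beta(u,v) - tu + \frac{1}{2\beta}(v-t)^2 \ge 0 .
\end{equation*}
Summing this over $j$ yields \eqref{eq:z-k-bd} at once.

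The cleanest route to the scalar inequality, and the one I would pursue, is to recognize the variational identity $\psi_\beta(u,v) = \max_{t\ge 0}\big[\,tu - \frac{1}{2\beta}(v-t)^2\,\big]$: differentiating the concave quadratic in $t$ gives the unconstrained maximizer $t^\ast = \beta u + v$, and a short computation shows that the two regimes $\beta u + v \ge 0$ and $\beta u + v < 0$ of this constrained maximization reproduce exactly the two pieces in the definition of $\psi_\beta$. Granting this identity, the scalar inequality is merely the statement that the maximand evaluated at the particular feasible point $t = z_j \ge 0$ cannot exceed the maximum, which is a tautology. Alternatively --- and this is presumably what the phrase ``update rule of $z$'' in the statement refers to --- one can verify the scalar inequality by the very same case split on the sign of $\beta f_j(x^{k+1}) + z_j^k$ that governs the $\max\big(-\frac{z_j^k}{\beta}, f_j(x^{k+1})\big)$ in \eqref{eq:blalm-z}, completing the square in $t$ within each case. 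The only mildly delicate point, which I would flag as the main obstacle, is spotting this variational (equivalently, completed-square) representation of $\psi_\beta$; once it is available, both displays follow with no further work.
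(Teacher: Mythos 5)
Your proof is correct. The first inequality \eqref{eq:y-k-bd} is handled exactly as in the paper (a direct application of Young's inequality). For \eqref{eq:z-k-bd} your primary route is genuinely different from, and arguably cleaner than, the paper's: you invoke the conjugate-type representation $\psi_\beta(u,v)=\max_{t\ge 0}\bigl[\,tu-\tfrac{1}{2\beta}(t-v)^2\,\bigr]$, which is easily verified (the unconstrained maximizer is $t^*=\beta u+v$, and the two sign regimes of $\beta u+v$ reproduce the two branches of $\psi_\beta$), so the per-coordinate inequality is simply the statement that the feasible point $t=z_j\ge 0$ cannot exceed the maximum. The paper instead argues by hand: it splits $[m]$ into $J_+^k=J_1^k\cup J_2^k$ and $J_-^k=J_3^k$ from \eqref{eq:def-Jk} according to the sign of $\beta f_j(x^{k+1})+z_j^k$, kills the $J_+^k$ terms by Young's inequality applied to $(z_j^k-z_j)f_j(x^{k+1})$, and bounds the $J_-^k$ terms below by $\tfrac{1}{2\beta}z_j^2$ using $f_j(x^{k+1})\le -\tfrac{z_j^k}{\beta}$ and $z_j\ge 0$ --- which is precisely the case-split computation you offer as your alternative. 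Your variational identity buys a one-line, case-free argument and makes transparent exactly where the hypothesis $z\ge 0$ enters (feasibility of $t=z_j$); the paper's computation is more elementary and incidentally yields the slightly stronger lower bound $\sum_{j\in J_-^k}\tfrac{1}{2\beta}z_j^2$ in place of $0$, though that extra slack is never exploited elsewhere.
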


\begin{proof}
The inequality in \eqref{eq:y-k-bd} directly follows from the Young's inequality.

Let $J_+^k=J_1^k\cup J_2^k$ and $J_-^k=J_3^k$, where $J_1^k, J_2^k$ and $J_3^k$ are defined in \eqref{eq:def-Jk}. Then
\begin{align*}
&~\Psi_\beta(x^{k+1},z^k)-\sum_{j=1}^m z_j f_j(x^{k+1})+\frac{1}{2\beta}\|z^k-z\|^2\cr
=&~\sum_{j\in J_+^k}\left[(z_j^k-z_j)f_j(x^{k+1})+\frac{\beta}{2}[f_j(x^{k+1})]^2+\frac{1}{2\beta}(z_j^k-z_j)^2\right]+\sum_{j\in J_-^k}\left[-\frac{(z_j^k)^2}{2\beta}-z_jf_j(x^{k+1})+\frac{1}{2\beta}(z_j^k-z_j)^2\right]\cr
\ge &~\sum_{j\in J_-^k}\left[-\frac{(z_j^k)^2}{2\beta}-z_jf_j(x^{k+1})+\frac{1}{2\beta}(z_j^k-z_j)^2\right]\ge \sum_{j\in J_-^k} \frac{1}{2\beta}(z_j)^2,
\end{align*}
where the first inequality follows from the Young's inequality, and the second one holds because $f_j(x^{k+1})\le -\frac{z_j^k}{\beta},\forall j\in J_-^k$ and $z_j\ge 0,\forall j$. This completes the proof.
\end{proof}

Using the previous establish results, we are now able to show the convergence rate of Algorithm \ref{alg:blalm}.

\begin{theorem}[Sublinear convergence of BLALM]\label{thm:b-rate}
Under Assumptions \ref{assump-kkt} and \ref{assump-lip}, let $\{w^k\}$ be the sequence from Algorithm \ref{alg:blalm} with $y^0=0$ and $z^0=0$. Assume $\ell_i^k$ is upper bounded by $\bar{\ell}_i$ for any $i\in[n]$ and any $k$. If $\rho_y\in (0,\frac{\beta}{n}]$, $\rho_z\in (0,\frac{\beta}{2n}]$, and $\eta_i\ge \bar{\ell}_i+\beta\|A_i\|^2,\,\forall i\in [n]$, then
\begin{subequations}\label{eq:rate-blalm} 
\begin{align}
\big|\EE[f_0(\bar{x}^{k+1})-f_0(x^*)]\big| \le \frac{1}{1+\frac{k}{n}}\left(C_{x^0}+\frac{2\|y^*\|^2}{n\rho_y}+\frac{2\|z^*\|^2}{n\rho_z}\right),\label{eq:rate-blalm-obj}\\
\EE\left[\|A\bar{x}^{k+1}-b\| + \sum_{j=1}^m [f_j(\bar{x}^{k+1})]_+\right] \le \frac{1}{1+\frac{k}{n}}\left(C_{x^0} + \frac{(1+\|y^*\|)^2}{2n\rho_y} + \frac{1}{2n\rho_z}\sum_{j=1}^m\big(1+z_j^*\big)^2\right), \label{eq:rate-blalm-res}
\end{align}
\end{subequations}
where $\bar{x}^{k+1}=\frac{1}{1+\frac{k}{n}}\sum_{t=0}^k x^{t+1}$, and
$$C_{x^0}=\big(1-\frac{1}{n}\big)\left[f_0(x^0)-f_0(x^*) + \frac{\beta}{2}\left(\|r^0\|^2+\sum_{j=1}^m [f_j(x^0)]_+^2\right)\right]+\frac{1}{2}\|x^0-x^*\|_{\veta}^2.$$
\end{theorem}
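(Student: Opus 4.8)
The plan is to massage the one-iteration estimate \eqref{eq:b-1iter} into the hypothesis \eqref{eq:pre-rate-cond} of Lemma \ref{lem:pre-rate} and then quote that lemma. First I would set $x=x^*$ in \eqref{eq:b-1iter}; this is admissible since the assumed KKT point satisfies $Ax^*=b$ and $f_j(x^*)\le 0$ for all $j\in[m]$. The block-increment contributions $\tfrac12\|x^{k+1}-x^k\|_{\veta-\vell^k}^2$ and $-\tfrac\beta2\|x^{k+1}-x^k\|_{A^\top A}^2$ on the left together form a nonnegative quantity: only block $i_k$ moves, and the standing requirement $\eta_i\ge\bar{\ell}_i+\beta\|A_i\|^2$ gives $\eta_{i_k}-\ell_{i_k}^k-\beta\|A_{i_k}\|^2\ge 0$, so these terms can be dropped from the left-hand side.

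Next I would introduce the arbitrary multipliers $y$ and $z\ge 0$ in order to manufacture $\Phi(x^{k+1};x^*,y,z)$ on the left. Writing $\langle y^{k+1},r^{k+1}\rangle=\langle y,r^{k+1}\rangle+\langle y^{k+1}-y,r^{k+1}\rangle$ and combining the $y$-update with \eqref{eq:y-k-bd}, together with \eqref{eq:z-k-bd} to trade $\Psi_\beta(x^{k+1},z^k)$ for $\sum_{j}z_jf_j(x^{k+1})$, the three data-fitting pieces assemble into $\Phi(x^{k+1};x^*,y,z)$ from \eqref{eq:def-Phi}, at the cost of the quadratics $\tfrac1{2\beta}\|y^k-y\|^2$ and $\tfrac1{2\beta}\|z^k-z\|^2$; Lemma \ref{lem:psi-k} absorbs the gap between $\Psi_\beta(x^{k+1},z^k)$ and $\Psi_\beta(x^{k+1},z^{k+1})$. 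The stepsize bounds $\rho_y\le\beta/n$ and $\rho_z\le\beta/(2n)$ are exactly what force the leftover coefficients of $\|r^{k+1}\|^2$ and $\|z^{k+1}-z^k\|^2$ to be nonnegative, so those residuals can again be discarded.

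The heart of the argument is the telescoping against the discount $1-\tfrac1n$ in front of the value-gap $\hat B^k:=f_0(x^k)-f_0(x^*)+\langle y^k,r^k\rangle+\tfrac\beta2\|r^k\|^2+\Psi_\beta(x^k,z^k)$. Splitting the penalty $\beta\|r^k\|^2$ into $(1-\tfrac1n)\tfrac\beta2\|r^k\|^2$ plus a nonnegative slack $\tfrac{\beta}{2n}\|r^k\|^2$, the right-hand side becomes $(1-\tfrac1n)\hat B^k$ plus telescoping distance terms, and the value quantity produced on the left at step $k$ feeds the discounted quantity on the right at step $k+1$. Summing over $k=0,\dots,K$, the $x$-distances telescope to $\tfrac12\|x^0-x^*\|_{\veta}^2$, the terminal value-gap $\hat B^{K+1}$ (nonnegative) is dropped, and with $y^0=z^0=0$ one has $\hat B^0=f_0(x^0)-f_0(x^*)+\tfrac\beta2\|r^0\|^2+\tfrac\beta2\sum_{j}[f_j(x^0)]_+^2$, using $\Psi_\beta(\cdot,0)=\tfrac\beta2\sum_{j}[f_j(\cdot)]_+^2$; this is precisely the combination in $C_{x^0}$. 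The decisive bookkeeping is that the geometric discounting by $1-\tfrac1n$ turns the naive count $K+1$ into the effective weight $1+\tfrac Kn$ while rescaling the dual contributions by $\tfrac1n$, which is how the constants $\tfrac1{2n\rho_y}$ and $\tfrac1{2n\rho_z}$ arise.

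Finally, by convexity of $\Phi(\cdot;x^*,y,z)$ (convex in the first argument, linear in $(y,z)$) and the definition of $\bar{x}^{k+1}$ as the matching ergodic average, I would obtain $\EE[\Phi(\bar{x}^{k+1};x^*,y,z)]\le\tfrac1{1+k/n}\big(C_{x^0}+\tfrac1{2n\rho_y}\|y\|^2+\tfrac1{2n\rho_z}\|z\|^2\big)$ for every $y$ and every $z\ge 0$. This is condition \eqref{eq:pre-rate-cond} with $\alpha=\tfrac{C_{x^0}}{1+k/n}$, $c_1=\tfrac1{(1+k/n)2n\rho_y}$ and $c_2=\tfrac1{(1+k/n)2n\rho_z}$, so \eqref{eq:rate-blalm} follows at once from \eqref{eq:pre-rate-obj}--\eqref{eq:pre-rate-res}. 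I expect the main obstacle to be the discounted telescoping in the third step: one must check that every quadratic residual left behind ($\|r^{k+1}\|^2$, $\|z^{k+1}-z^k\|^2$, and the block increment) has a nonnegative coefficient under the stated stepsize restrictions, and track precisely how the factor $1-\tfrac1n$ simultaneously reshapes the iteration count and the dual constants; the convexity reduction and the appeal to Lemma \ref{lem:pre-rate} are then routine.
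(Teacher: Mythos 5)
Your proposal is correct and follows essentially the same route as the paper's proof: set $x=x^*$ in \eqref{eq:b-1iter}, drop the block-increment and residual terms using the stepsize conditions, convert to $\Phi(\cdot\,;x^*,y,z)$ via Lemmas \ref{lem:multiplier}, \ref{lem:ineq-z}, \ref{lem:psi-k} and \ref{lem:yz-k-bd}, telescope against the $1-\frac{1}{n}$ discount to obtain the effective weight $1+\frac{k}{n}$, and finish with Lemma \ref{lem:pre-rate}. The one bookkeeping point to watch is that \eqref{eq:y-k-bd} and \eqref{eq:z-k-bd} should be invoked only for the terminal (weight-one) term at index $k$, where the accumulated $\frac{1}{2n\rho_y}\|y^k-y\|^2$ and $\frac{1}{2n\rho_z}\|z^k-z\|^2$ from the $\frac{1}{n}$-scaled, telescoped multiplier identities can absorb the costs $\frac{1}{2\beta}\|y^k-y\|^2$ and $\frac{1}{2\beta}\|z^k-z\|^2$ (using $\rho_y,\rho_z\le \beta/n$), whereas applying them at every iteration would leave non-telescoping quadratics on the right-hand side.
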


\begin{proof}
Since $\eta_i\ge \bar{\ell}_i+\beta\|A_i\|^2,\,\forall i\in [n]$ and $x^{k+1}_i=x^k_i,\,\forall i\neq i_k$, it holds 
$$\|x^{k+1}-x^k\|^2_{\veta-\vell^k}\ge \beta\|x^{k+1}-x^k\|^2_{A^\top A}.$$ Hence, taking expectation on both sides of \eqref{eq:b-1iter} with $x=x^*$ and summing it up give
\begin{align}\label{eq:b-1iter-sum}
&~\EE\left[f_0(x^{k+1})-f_0(x^*)+\langle y^k, r^{k+1}\rangle +\Psi_\beta(x^{k+1},z^k)\right]+\frac{\beta}{2}\EE\|r^{k+1}\|^2\cr
&~+\frac{1}{n}\sum_{t=0}^{k-1}\EE\left[f_0(x^{t+1})-f_0(x^*)+\langle y^{t+1}, r^{t+1}\rangle +\Psi_\beta(x^{t+1},z^t)\right]+ \big(\frac{\beta}{n}-\rho_y\big)\sum_{t=0}^{k-1}\EE\|r^{t+1}\|^2\cr
&~+\big(1-\frac{1}{n}\big)\sum_{t=0}^{k-1}\EE\big[\Psi_\beta(x^{t+1},z^t)-\Psi_\beta(x^{t+1},z^{t+1})\big]+\frac{1}{2}\EE\|x^{k+1}-x^*\|_{\veta}^2\cr
\le &~ \big(1-\frac{1}{n}\big)\left[f_0(x^0)-f_0(x^*)+\langle y^0, r^0\rangle + \frac{\beta}{2}\|r^0\|^2+\Psi_\beta(x^0,z^0)\right]+\frac{1}{2}\|x^0-x^*\|_{\veta}^2-\frac{\beta}{2}\|r^0\|^2,
\end{align}
where in the first line, we have used $y^{k+1}=y^k-\rho_y r^{k+1}$.
Summing \eqref{eq:lalm-yterm}, \eqref{eq:lalm-zterm}, and \eqref{eq:lalm-ineq-z} gives
\begin{align*}
&~\frac{1}{2\rho_y}\left[\|y^k-y\|^2-\|y^0-y\|^2+\sum_{t=0}^{k-1}\|y^{t+1}-y^t\|^2\right] - \sum_{t=0}^{k-1}\langle y^{t+1}-y, r^{t+1}\rangle\cr
&~+\frac{1}{2\rho_z}\left[\|z^k-z\|^2-\|z^0-z\|^2+\sum_{t=0}^{k-1}\|z^{t+1}-z^t\|^2\right]+\frac{\beta-2\rho_z}{2\rho_z^2}\sum_{t=0}^{k-1}\|z^{t+1}-z^t\|^2\cr
\le &~\sum_{t=0}^{k-1}\left[\Psi_\beta(x^{t+1},z^t)-\sum_{j=1}^m z_j f_j(x^{t+1})\right].
\end{align*}
Since $y^0=0$ and $z^0=0$, adding $\frac{1}{n}$ of the above inequality to \eqref{eq:b-1iter-sum}, using Lemma \ref{lem:psi-k}, and noting $\frac{\beta}{n}\ge \rho_y$, $\frac{\beta-\rho_z}{2n\rho_z^2}\ge\frac{1-\frac{1}{n}}{\rho_z}$ from the choice of $\rho_y,\rho_z$, we have
\begin{align}\label{eq:b-1iter-yz-sum}
&~\EE\left[f_0(x^{k+1})-f_0(x^*)+\langle y^k, r^{k+1}\rangle +\Psi_\beta(x^{k+1},z^k)\right]+\frac{\beta}{2}\EE\|r^{k+1}\|^2+\frac{1}{2n\rho_y}\EE\|y^k-y\|^2\cr
&~+\frac{1}{2}\EE\|x^{k+1}-x^*\|_{\veta}^2+\frac{1}{2n\rho_z}\EE\|z^k-z\|^2+\frac{1}{n}\sum_{t=0}^{k-1}\EE [\Phi(x^{t+1}; x^*, y,z)]\cr
\le &~ \big(1-\frac{1}{n}\big)\left[f_0(x^0)-f_0(x^*) + \beta\|r^0\|^2+\Psi_\beta(x^0,0)\right]+\frac{1}{2}\|x^0-x^*\|_{\veta}^2-\frac{\beta}{2}\|r^0\|^2\cr
&~+\frac{1}{2n\rho_y}\|y\|^2+\frac{1}{2n\rho_z}\|z\|^2.
\end{align}

Note $\Psi_\beta(x^0,0)=\sum_{j=1}^m [f_j(x^0)]_+^2$. Since $\rho_y,\rho_z \le \frac{\beta}{n}$, plugging \eqref{eq:y-k-bd} and \eqref{eq:z-k-bd} into \eqref{eq:b-1iter-yz-sum} and using the convexity of $f_i$'s yield
\begin{align*}
\EE[\Phi(\bar{x}^{k+1};x^*,y,z)]
\le \frac{1}{1+\frac{k}{n}}\left(C_{x^0}+\frac{1}{2n\rho_y}\|y\|^2+\frac{1}{2n\rho_z}\|z\|^2\right).
\end{align*}
Therefore, we complete the proof by Lemma \ref{lem:pre-rate}.
\end{proof}

\begin{remark}
If $n$ block updates of $x$ costs roughly the same as one full update to $x$, then the results in \eqref{eq:rate-blalm} are comparable to those in \eqref{eq:rate-lalm} by noting their differences in choosing $\rho_y,\rho_z$. One drawback of Theorem \ref{thm:b-rate} is the assumption on the upper bound of $\vell^k$. From \eqref{eq:der-lip-psi}, we see that the upper bound can be pre-calculated if $f_j(x),\,\forall j\in [m]$ are affine. However, in general, it is unknown and dependent on the iterates. Numerically, we can gradually increase $\eta_i$ by a fixed amount or ratio if $\eta_i < \ell_i^k + \beta\|A_i\|^2$ is detected or by backtracking until the following inequality holds:
\begin{equation}\label{eq:choose-eta-ik}
F_{\beta}(x^{k+1},y^k,z^k) \le F_\beta(w^k) + \big\langle \nabla_{x_{ik}} F_\beta(w^k), x^{k+1}_{i_k}-x^k_{i_k}\big\rangle + \frac{\eta_{i_k}}{2}\|x^{k+1}_{i_k}-x^k_{i_k}\|^2. 
\end{equation}
 After finitely many increases, $\ell_i^k + \beta\|A_i\|^2 \le \eta_i,\forall i,$ will hold in high probability for every $k$. This can be explained by the following arguments. 

Let $\eta_i=\zeta\ge 1,\forall i\in [n]$. Since $n\rho_z \le \frac{\beta}{2}$, then from \eqref{eq:opt}, \eqref{eq:b-1iter-yz-sum} with $(y,z)=(y^*,z^*)$, \eqref{eq:y-k-bd}, and \eqref{eq:z-k-bd}, it follows that 
$$\begin{array}{l}\EE\|x^k-x^*\| \le \sqrt{\frac{C_{x^0}}{\zeta}}+\|x^0-x^*\|+\frac{\|y^*\|}{\sqrt{n\rho_y\zeta}}+\frac{\|z^*\|}{\sqrt{n\rho_z\zeta}},\\[0.1cm]\EE\|z^k-z^*\|\le \sqrt{\beta C_{x^0}}+ \sqrt{\beta \zeta}\|x^0-x^*\|+\sqrt{\frac{\beta}{n\rho_y}}\|y^*\|+\sqrt{\frac{\beta}{n\rho_z}}\|z^*\|.\end{array}$$ 
Hence, we have from \eqref{eq:fzj} that $\EE [\ell_i^k] = O(\sqrt{\zeta})$. By the Markov inequality, for every $k$, $\ell_i^k \le \eta_i, \forall i\in [n]$ holds in high probability if $\eta_i\gg \underline{\zeta},\forall i\in[n]$, where $\underline{\zeta}\ge 1$ satisfies
\begin{align*}
\underline{\zeta} \ge &~ L_g +\beta\|A\|^2 + \beta\sum_{j=1}^m B_j^2 + \beta \sum_{j=1}^m B_j L_j \left(\sqrt{C_{x^0}}+\|x^0-x^*\|+\frac{\|y^*\|}{\sqrt{n\rho_y}}+\frac{\|z^*\|}{\sqrt{n\rho_z}}\right)\\
 &~+ \sqrt{\sum_{j=1}^m L_j^2} \left(\|z^*\|+\sqrt{2\beta C_{x^0}}+ \sqrt{2\beta \underline{\zeta}}\|x^0-x^*\|+\sqrt{\frac{2\beta}{n\rho_y}}\|y^*\|+\sqrt{\frac{2\beta}{n\rho_z}}\|z^*\|\right).
\end{align*}  
\end{remark}

\begin{remark}
If Assumption \ref{assump-scp} is satisfied, we can also show a local linear convergence result of Algorithm \ref{alg:blalm} following the analysis in section \ref{sec:local} and that in \cite{xu2017accelerated-pdbc}. We do not expand details here but leave it to interested readers.
\end{remark}

\section{Applications}\label{sec:application}
In this section, we give a few applications that can be formulated in the form of \eqref{eq:ccp} and discuss how Algorithm \ref{alg:lalm} and/or Algorithm \ref{alg:blalm} can be applied.
\subsection{Basis pursuit denosing}
Suppose we observe a noisy measurement $b=A\theta^o+\xi$ of a signal $\theta^o$, where $A$ is a measuring matrix, and $\xi$ is a noise vector. Assume $\theta^o$ can be sparsely represented by a dictionary $D$. Then we can recover the signal through solving the so-called basis pursuit denoising (BPDN) problem:
\begin{equation}\label{eq:bpdn}
\min_x \|x\|_1, \st \|ADx-b\|^2 \le \delta,
\end{equation}
where $\delta$ measures the noise level. Upon obtaining a solution $x^*$ to \eqref{eq:bpdn}, we let $\theta^r = Dx^*$ be the recovered signal. Depending on the application, one can impose certain bounds on $x$ to make the recovered signal physically meaningful. In this case, all conditions in Assumption \ref{assump-lip} holds. In addition, assuming $b\in \mathrm{Range}(AD)$, then Slater condition holds, and thus Assumption \ref{assump-kkt} is satisfied. Hence, Algorithm \ref{alg:lalm} is applicable, and the $x$-subproblem \eqref{eq:lalm-x} has closed-form solution by shrinkage or soft-thresholding.
If $A$ and $D$ are stored as matrices, \eqref{eq:bpdn} is coordinate friendly, and we can also use Algorithm \ref{alg:blalm}. However, for certain signal processing problems, evaluating $A\theta$ and/or $Dx$ may not require explicit form of $A$ or $D$ but can be efficiently realized, such as a partial circulant $A$ and/or a discrete cosine dictionary $D$. For this case, Algorithm \ref{alg:blalm} will not be as efficient as Algorithm \ref{alg:lalm} since evaluating coordinate gradient of $\|ADx-b\|^2$ may require full gradient. 

\subsection{Quadratically constrained quadratic programming}
The quadratically constrained quadratic programming (QCQP) can be formulated as
\begin{equation}\label{eq:qcqp}
\begin{aligned}
\min_{x\in \RR^p}~ & \frac{1}{2}x^\top Q_0 x + c_0^\top x + d_0\\
\st & \frac{1}{2}x^\top Q_j x + c_j^\top x + d_j \le 0,\, \forall j \in [m],\\
& Ax = b,\\
& l_i \le x_i \le u_i, \forall i\in  [p].
\end{aligned}
\end{equation}
Let $\cX=[l_1,u_1]\times\cdots\times [l_p,u_p]$ and $h(x) = \iota_\cX(x)$. Then \eqref{eq:qcqp} can be written as \eqref{eq:ccp} by adding $h(x)$ into the objective. When every $Q_j$ is positive semidefinite, the problem is convex, and if all $l_i$'s and $u_i$'s are finite, then $\cX$ is bounded and all conditions in Assumption \ref{assump-lip} hold. Hence, we can apply Algorithm \ref{alg:lalm} to find a solution of \eqref{eq:qcqp}, and the solution of $x$-subproblem \eqref{eq:lalm-x} can be explicitly given by performing projection to a box constraint. In addition, the problem is coordinate friendly since evaluating the partial derivative of $\frac{1}{2}x^\top Q_j x + c_j^\top x + d_j$ about each $x_i$ costs roughly $\frac{1}{p}$ of computing the full gradient. Furthermore, if we maintain $Q_j x$, then calculating the function value is negligible compared to the gradient computation. Therefore, we can also apply Algorithm \ref{alg:blalm} to the QCQP.


\subsection{Finite minimax problems}
Many applications can be formulated as a finite minimax problem (e.g., see \cite{pee2011solving} and the references therein):
\begin{equation}\label{eq:minimax}
\min_{x\in \cX} \max_{1\le j\le m} f_j(x),
\end{equation}
where each $f_j$ is a smooth convex function. Although all $f_j$'s are differentiable, the objective of \eqref{eq:minimax} is generally not differentiable due to the max operation. Introducing variable $t$ and requiring $\max_{1\le j\le m} f_j(x)\le t$, one can express the minimax problem equivalently to
\begin{equation}\label{eq:minimax-ccp}
\min_{x\in\cX, t} t, \st f_j(x)-t \le 0, \,\forall j\in [m].
\end{equation}
For any $x\in\mathrm{int}(\cX)$, each inequality constraint holds strictly at $(x, \max_j f_j(x) + 1)$, and thus the Slater condition holds. Hence, Assumption \ref{assump-kkt} is satisfied. In addition, if $\cX$ is bounded, then all conditions in Assumption \ref{assump-lip} also hold. Therefore, we can use Algorithm \ref{alg:lalm} to find a solution of \eqref{eq:minimax-ccp} and equivalently \eqref{eq:minimax}, and every iteration requires performing a projection to $\cX$. Depending on applications, one may also apply Algorithm \ref{alg:blalm} if the problem is coordinate friendly, for example, every $f_j$ is a quadratic function.

\section{Numerical experiments}\label{sec:numerical}
In this section, we test Algorithms \ref{alg:lalm} and \ref{alg:blalm} on BPDN \eqref{eq:bpdn} and QCQP \eqref{eq:qcqp} to show their numerical performance. The two algorithms are named as LALM and BLALM respectively. For both algorithms, we choose the parameter $\eta$ by backtracking. More precisely, at each iteration $k$, for LALM, we start from $\eta^k=\eta^{k-1}$ and multiply it by 1.5 if \eqref{eq:choose-eta} fails, and for BLALM, we initialize $\eta_{i_k}^k=\eta_{i_k}^{k-1}$ and multiply it by 1.5 if \eqref{eq:choose-eta-ik} does not hold. For both tests, we run the compared methods to $10^5$ epochs, where one epoch is equivalent to $n$ block updates. Optimal solutions to both tested problems are computed by CVX \cite{grant2008cvx} with high precision.

\subsection{Basis pursuit denoising}\label{sec:bpdn}
In this test, we show the convergence speed of LALM and BLALM on solving BPDN \eqref{eq:bpdn}. For simplicity, we set $D=I$. The matrix $A\in \RR^{50\times 100}$ is randomly generated according to the standard Gaussian distribution, and the underlying sparse signal $x^o$ has 5 nonzero components following the standard Gaussian distribution. Then we let $b=Ax^0 + 0.1\xi$, where $\xi$ is a unit Gaussian noise vector. For BLALM, we evenly partition the variable $x$ into 10 blocks.  The parameter $\beta$ is simply set to 1 for both methods, and $\rho_z=\beta$ is set for LALM and $\rho_z=\frac{\beta}{10}$ for BLALM. Note that for the latter, the value of $\rho_z$ is larger than that given by the theorem, and the algorithm still works well. This may indicate that our analysis is not tight. 

\begin{figure}
\begin{center}
\includegraphics[width=0.45\textwidth]{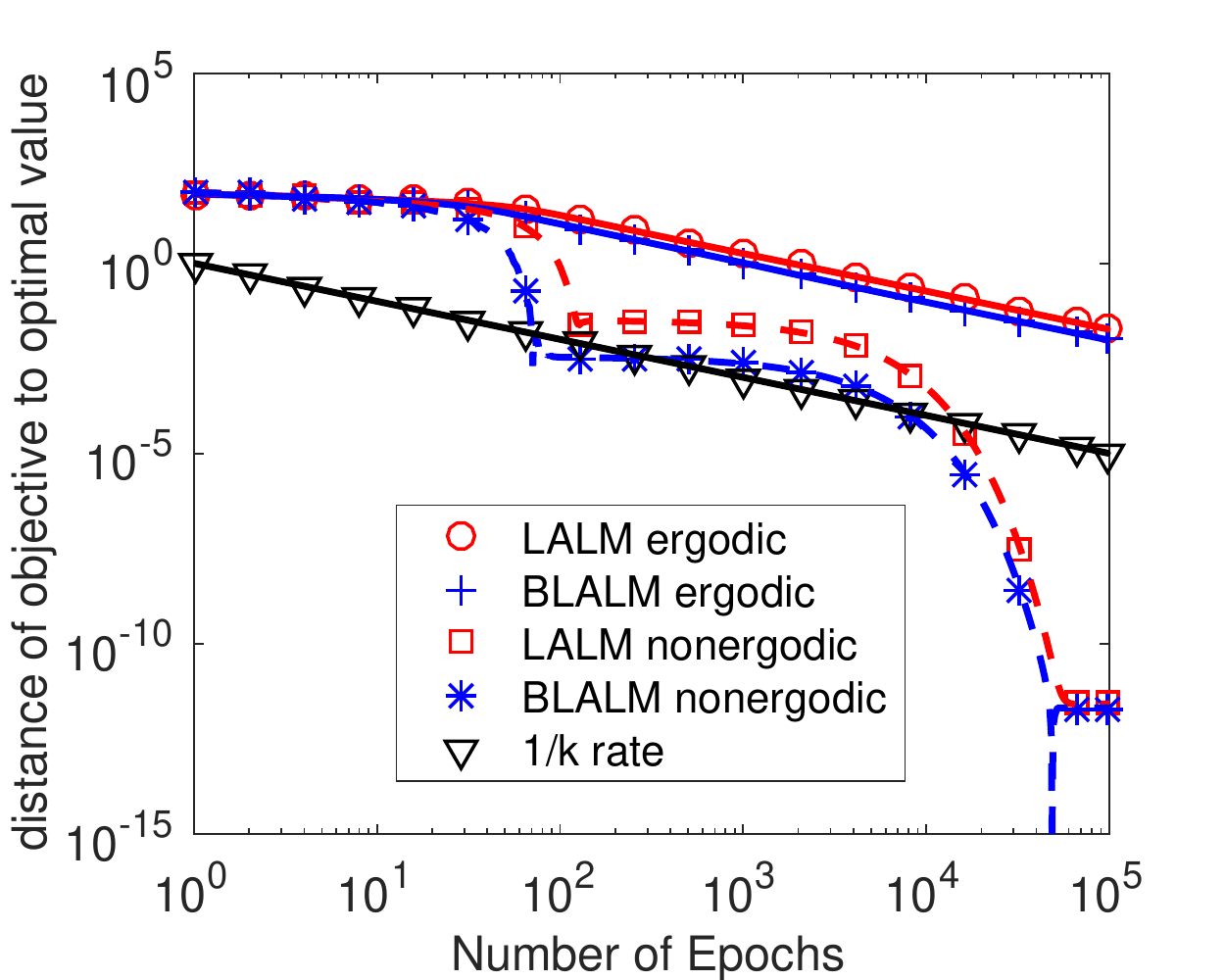}
\includegraphics[width=0.45\textwidth]{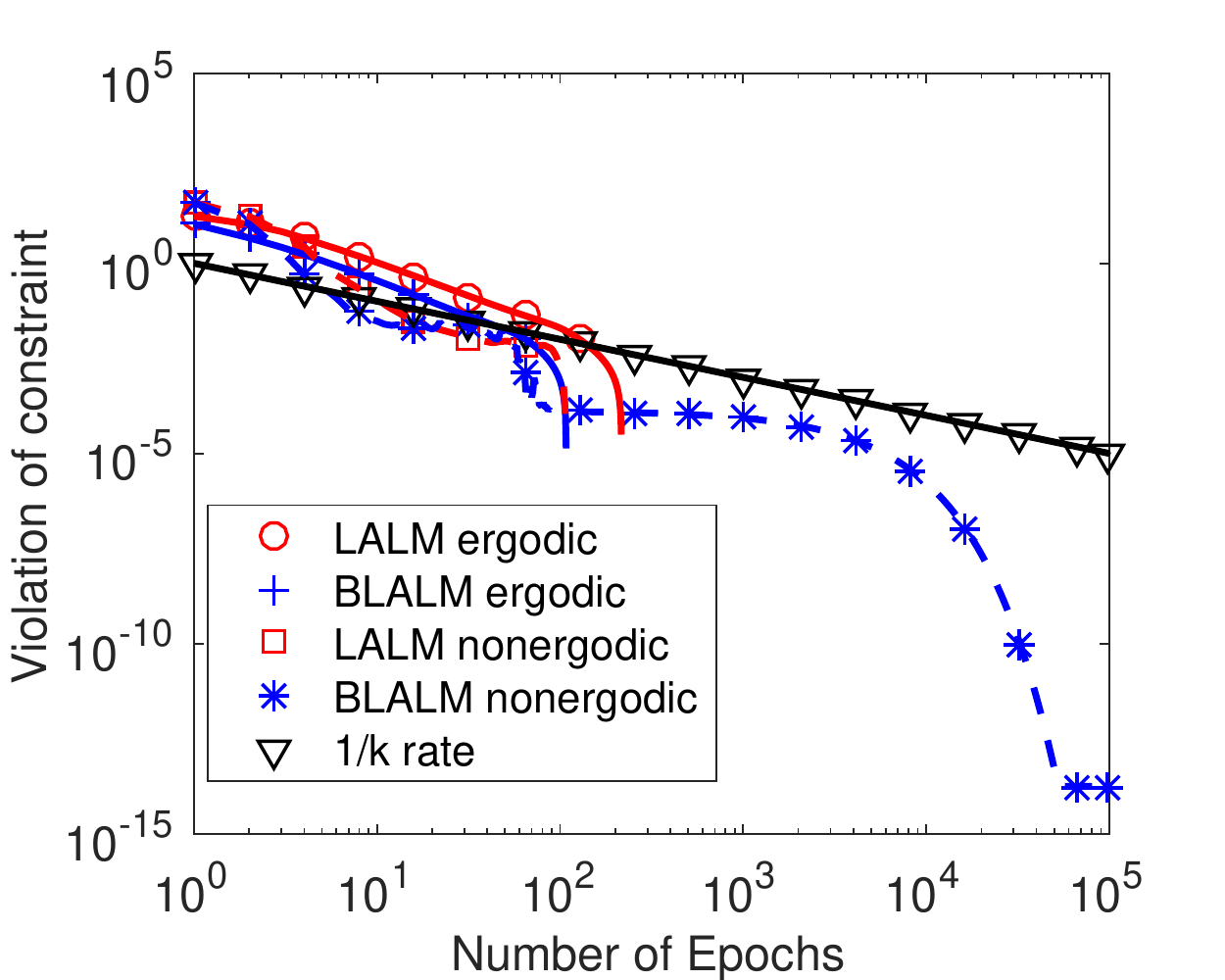}
\end{center}
\caption{Convergence behaviors of Algorithm \ref{alg:lalm} (named LALM) and Algorithm \ref{alg:blalm} (named BLALM) on the BPDN problem \eqref{eq:bpdn}. Left: distance of objective value to the optimal value $|f_0(x)-f_0(x^*)|$; Right: constraint residual $\sum_{j=1}^m[f_j(x)]_+$. ``ergodic'' curves are measured by averaged iterate $\bar{x}^k$ and ``nonergodic'' ones by actual iterate $x^k$. The missing part on each constraint violation curve corresponds to zero residual.}\label{fig:bpdn}
\end{figure}

Figure \ref{fig:bpdn} plots the objective values and constraint residuals produced by both algorithms, where the curve corresponding to ``ergodic'' is obtained by using the averaged iterates $\bar{x}^k$ and ``nonergodic'' by the actual iterate $x^k$. The missing part on each constraint violation curve corresponds to zero residual. Since LALM and BLALM have similar per-epoch complexity, their comparison in terms of running time is similar to that in Figure \ref{fig:bpdn}. From the figure, we see that BLALM is better than LALM in terms of both ergodic and nonergodic iterates. The ergodic convergence speed of both methods is precisely the order of $\frac{1}{k}$ and matches our theorems. However, the nonergodic convergence is significantly faster, especially as the iterate approaches to optimality. This is possibly because the iterate enters a region where the algorithm has linear convergence as indicated by the analysis in section \ref{sec:local}. For this reason, we use the actual iterate in the next test.

\subsection{Quadratically constrained quadratic programming}\label{sec:qcqp}
In this subsection, we test LALM and BLALM on the QCQP problem \eqref{eq:qcqp} and compare them to the recently proposed first-order primal-dual type method by Yu\&Neely \cite{yu2016primal}. They consider a smooth constrained convex program in the form of \eqref{eq:ccp} without an explicit linear equality constraint. Their method that we name as PD-YN iteratively performs the updates:
\begin{subequations}\label{eq:pd-yn}
\begin{align}
&x^{k+1}=\cP_\cX\left(x^k-\frac{1}{\eta}\left[\nabla f_0(x^k)+\sum_{j=1}^m\big(\lambda_j^k+f_j(x^k)\big)\nabla f_j(x^k)\right]\right),\\
&\lambda_j^{k+1}=\max\big(-f_j(x^k),\lambda_j^k+f_j(x^k)\big),\,\forall j\in [m],
\end{align}
\end{subequations}
where $\lambda_j^0 = \max(0, -f_j(x^0)),\,\forall j\in [m]$, and $\eta$ is the step size. In the test, we also choose $\eta$ adaptively by backtracking such that
$$\phi(x^{k+1},z^k) \le \phi(x^k,z^k) + \big\langle \nabla_x\phi(x^k,z^k), x^{k+1}-x^k\big\rangle + \frac{\eta^k}{2}\|x^{k+1}-x^k\|^2,$$
where $\phi(x,z) = f_0(x) + \sum_{j=1}^m z_j f_j(x)$ and $z^k_j=\lambda_j^k+f_j(x^k)$. Although \cite{yu2016primal} does not show the convergence of PD-YN with the above adaptive $\eta^k$, we observe its better performance than that with a fixed $\eta$. 

The problem size is set to $m=10$ and $p=2000$. We randomly generate SPD matrices $Q_j, j=0,1,\ldots m$. $A$ is set to zero, i.e., there is no linear equality constraint. The vector $c_j$'s are generated according to the standard Gaussian distribution, and $d_0=0$ and each $d_j$ is a negative number for $j\in [m]$. Also we set $l_i=-10$ and $u_i=10$ for each $i\in[p]$. Hence, the zero vector is an interior point of $\cX$ and makes every inequality hold strictly, namely, the Slater condition holds. We set $\beta=0.1$ for both LALM and BLALM, and for the latter, we evenly partition the variable into 200 blocks. The parameter $\rho_z$ is set to $\beta$ and $\frac{\beta}{200}$ for the two algorithms respectively.

\begin{figure}
\begin{center}
\includegraphics[width=0.45\textwidth]{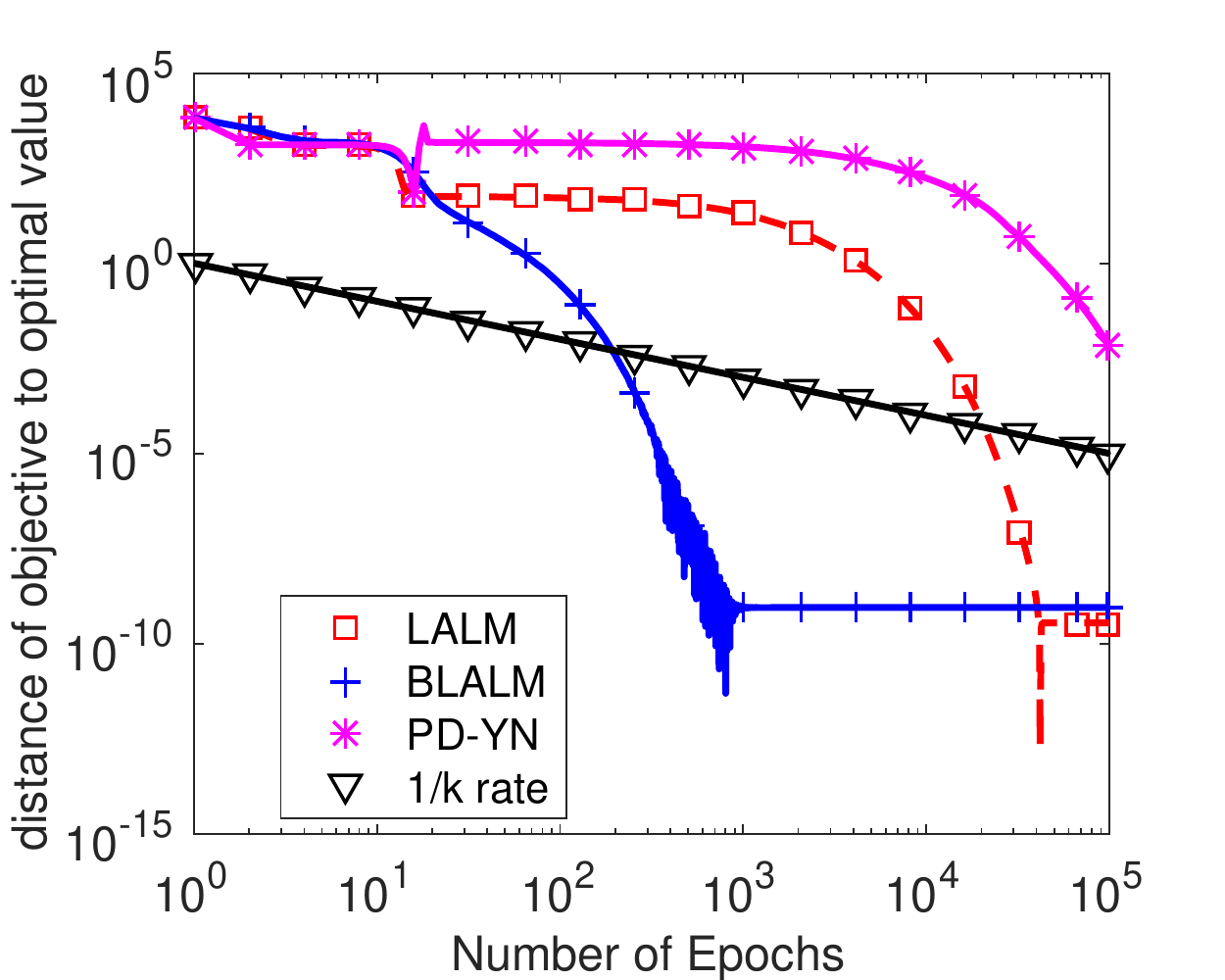}
\includegraphics[width=0.45\textwidth]{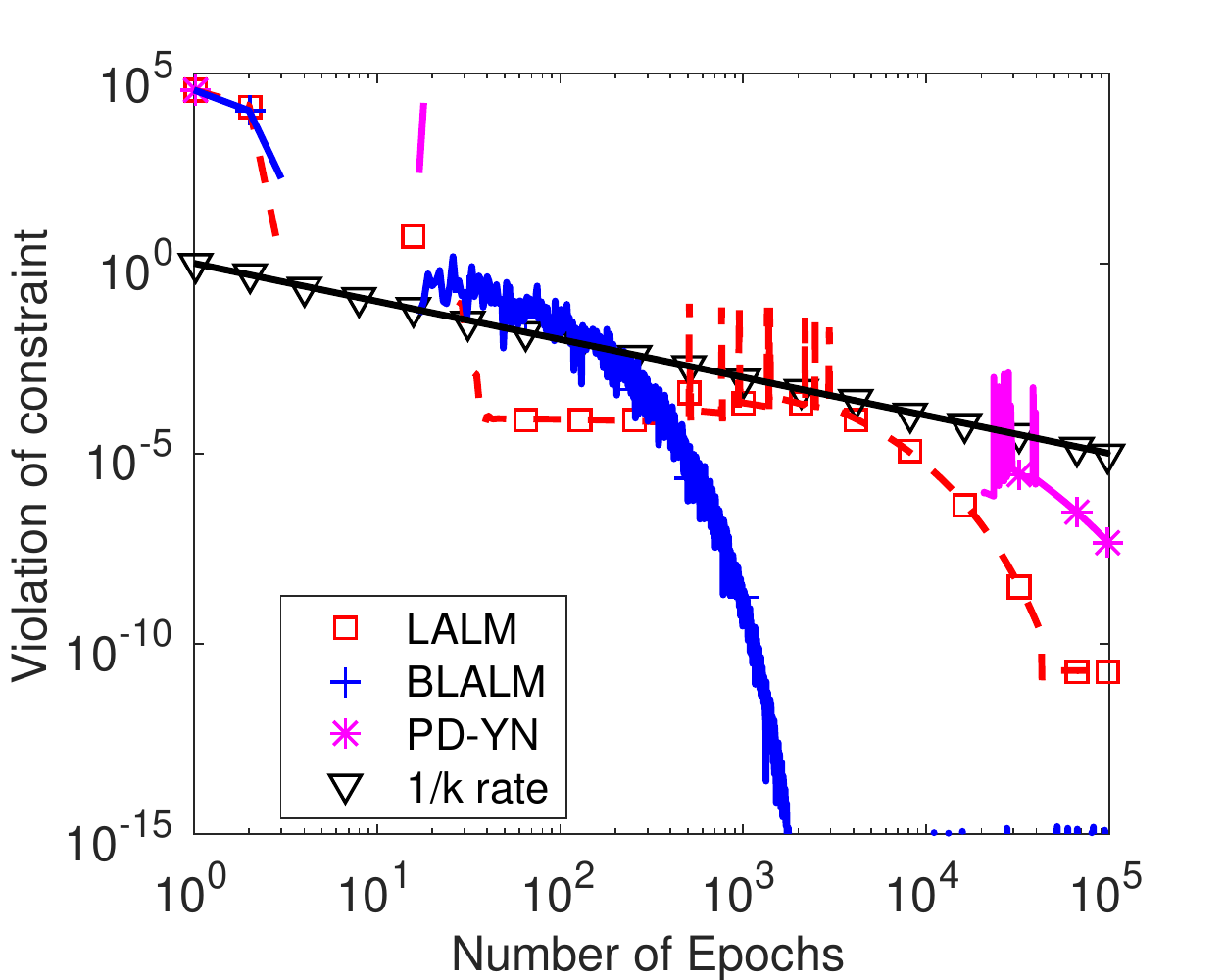}
\end{center}
\caption{Convergence behaviors of Algorithm \ref{alg:lalm} (named LALM), Algorithm \ref{alg:blalm} (named BLALM), and the primal-dual type method (named PD-YN) in \cite{yu2016primal} on the QCQP problem \eqref{eq:qcqp}. Left: distance of objective value to the optimal value $|f_0(x)-f_0(x^*)|$; Right: constraint residual $\sum_{j=1}^m[f_j(x)]_+$. The missing part on each constraint violation curve corresponds to zero residual.}\label{fig:qcqp}
\end{figure}

Figure \ref{fig:qcqp} plots the results by the three compared algorithms. Both the proposed methods perform significantly better than PD-YN, and BLALM is the best among the three. In addition, we notice that LALM and BLALM converge linearly when the iterate approaches to optimality, as indicated by Theorem \ref{thm:loc-lin}.

\section{Conclusions}\label{sec:conclusion}
We have presented a first-order method for solving composite convex programming with both equality and smooth nonlinear inequality constraints. The method is derived from proximal linearization of the classic augmented Lagrangian function. Its global iterate convergence and global sublinear and local linear convergence results have been established. For the problem that has coordinate friendly structure, we have also proposed a first-order randomized block update method and shown its global sublinear convergence in expectation. In addition, we have implemented the two methods on solving the basis pursuit denoising problem and the convex quadratically constrained quadratic programming. Global sublinear and local linear convergence are both observed in the numerical experiments.

\bibliographystyle{abbrv}
\bibliography{optim}

\end{document}